\def\disp{\displaystyle}
\theoremstyle{plain}
\newtheorem{theorem}{Theorem}[section]
\newtheorem{lemma}{Lemma}[section]
\theoremstyle{definition}
\numberwithin{equation}{section}
\begin{document}
\title{\large\bf Asymptotic behavior of a doubly haptotactic cross-diffusion model for oncolytic virotherapy}

\author{
{\rm  Yifu Wang$^{1,*}$, Chi Xu$^{1}$
}\\[0.2cm]
{\it\small \rm School of Mathematics and Statistics,  Beijing Institute of Technology}\\
{\it\small \rm Beijing 100081, P.R. China$^1$}
}
\date{}

\maketitle
\renewcommand{\thefootnote}{\fnsymbol{footnote}}
\setcounter{footnote}{-1}
\footnote{$^{*}$Corresponding author. E-mail addresses:wangyifu@bit.edu.cn (Y.Wang), XuChi1993@126.com (C. Xu)}

\maketitle
\begin{abstract}
This paper considers a model for oncolytic virotherapy given by the doubly haptotactic cross-diffusion system
\begin{equation*}
 \left\{\begin{array}{ll}
  u_t=D_u\Delta  u-\xi_u\nabla\cdot(u\nabla v)+\mu_u u(1-u)-\rho uz,\\
 v_t=- (\alpha_u u+\alpha_w w)v,\\
w_t=D_w\Delta  w-\xi_w\nabla\cdot(w\nabla v)- w+\rho uz,\\
z_t=D_z\Delta z-\delta_z z- \rho uz+\beta w,
 \end{array}\right.
\end{equation*}
with positive parameters  $D_u,D_w,D_z,\xi_u,\xi_w,\delta_z,\rho$,
$\alpha_u,\alpha_w,\mu_u,\beta$.
 When
posed under no-flux boundary conditions in a smoothly bounded domain $\Omega\subset {\mathbb{R}}^2$, and along with  initial conditions involving suitably regular data, the global existence of classical solution to this system was asserted in  Tao and Winkler (2020).  Based on the suitable quasi-Lyapunov functional,  it is shown that when the virus replication rate $\beta<1$, the global classical solution $(u,v,w,z)$ is uniformly bounded and exponentially stabilizes to
  the constant
equilibrium $(1, 0, 0, 0)$ in the topology $(L^\infty(\Omega))^4 $  as $t\rightarrow \infty$.

\end{abstract}
\vspace{0.3cm}
\noindent {\bf\em Keywords:}~Haptotaxis; $L log L$-estimates; Asymptotic behavior.

\noindent
{\bf\em 2010 Mathematics Subject Classification}:~35K57, 35B45, 35Q92, 92C17

\section{Introduction}
As compared to the traditional treatment like chemotherapy or radiotherapy for cancer diseases, the prominent advantage of virotherapy is that  the therapy can reduce the side-effect on the healthy tissue. In clinical treatments, the so-call
oncolytic viruses (OV) which are either genetically engineered or naturally occurring can selectively attack the cancer cells and eventually destroy them
without harming normal cells because virus can replicate inside the infected cells  and  proceed to infect adjacent cancer cells
 with the aim to drive the tumor cells to extinction  (\cite{FIT,GPKLK}).
 Despite some partial success, implementation of virotherapy is not in sight. In fact,
 clinical data reveal that the efficacy of virotherapy will be reduced by many factors, such as
  circulating antibodies,~various immune cells or even deposits of extracellular matrix may essentially decrease (\cite{GK,NG}). Therefore, to facilitate the understanding of the mechanisms that hinder virus spread,  the authors of \cite{ARD-MB} proposed a mathematical model to describe  the interaction between both uninfected and infected cancer cells, as well as extracellular matrix (ECM) and oncolytic virus particles,   which is given by
 \begin{equation}\label{1.1}
 \left\{
 \begin{array}{lll}
 u_{t}=D_{u}\Delta u-\xi_u\nabla \cdot(u\nabla v)+\mu_u u(1-u)-\rho_u uz,&&x\in\Omega,~t>0,\\
 v_{t}=-(\alpha_u u+\alpha_w w)v	+\mu_v v(1-v),&&x\in\Omega,~t>0,\\
 w_{t}=D_w\Delta w-\xi_w\nabla \cdot(w\nabla v)-\delta_w w +\rho_w uz,&&x\in\Omega,~t>0,\\
 z_{t}=D_z\Delta z-\delta_z z-\rho_z uz+\beta w,&&x\in\Omega,~t>0,
 \end{array}
 \right.	
 \end{equation}
in  a   smoothly bounded domain  $\Omega \subset \mathbb{R}^n$,  with positive parameters  $D_u,D_w,D_z,\xi_u,\xi_w,\alpha_u,\alpha_w$, $\mu_u,  \delta_w,\delta_z,\beta$ and nonnegative constants $\mu_v,\rho_u, \rho_w, \rho_z$, and with the
unknown variables $u, w, z$ and $v$ denoting the population densities of uninfected cancer cells, infected
cancer cells, virus particles and ECM, respectively. Here
the crucial modeling hypothesis underlying \eqref{1.1}, which accounts for haptotactic motion of cancer cells and thereby marks
a substantial difference between \eqref{1.1} and related more classical reaction-diffusion models for virus
dynamics (\cite{K,PZS}), is that apart from its random diffusion, both uninfected and infected cancer cells bias their motion
upward ECM gradients  simultaneously due to the attraction by some macromolecules trapped in the ECM.
In addition, the oncolytic virus particles  infect the uninfected cancer cells upon contact with uninfected tumour cells,  and  new infectious virus particles  are released  at rate $\beta>0$ when infected
cells burst (a process known as lysis); beyond this, \eqref{1.1} presupposes that the ECM is degraded upon interacting with both type of cancer cells, and is possibly remodeled by the normal tissue according to logistic laws.

Due to its relevance in several biological contexts, inter alia the cancer invasion
(\cite{ACNST,CL}),  haptotaxis mechanism has received considerable attention in the analytical literature
(\cite{Cao,Jin,JinTian, LiL,PW3MAS,SSW, TaoSiam, TW-JDE, WalkerW,WYF,ZCU,ZhengK}).
The most characteristic ingredient of the model \eqref{1.1}  is the presence of two simultaneous haptotaxis processes of cancer cells,
and thereby distinguishes  it from most haptotaxis (\cite{Jin,WalkerW,ZCU}) and chemotaxis-haptotaxis systems (\cite{Cao,PW3MAS,TaoSiam}) studied in the literature, especially the ECM is degraded  by both type of cancer cells in \eqref{1.1} directly,  rather  matrix degrading enzymes (MDEs) secreted by tumor cells (see \cite{JinTian,PW3MAS,PW3MAS2, TaoSiam} for example). It is observed that the former circumstance seems to widely restrict the accessibility  to the approaches well established in the analysis of  related reaction--diffusion systems, and accordingly the considerable challenges arise for the rigorous  analysis of \eqref{1.1}, particularly when addressing issues
related to qualitative solution behaviour.

To the best of our knowledge, so far the quantitative comprehension available for \eqref{1.1} is yet mainly limited in some simple setting
(\cite{Chen,LiWang, RL-MMA, TW-PRSE,TW-DCDS,TW-JDE2020,TW-EJAM,TW-NA}). For instance, based on the construction of certain quasi--Lyapunov functional, Tao and Winkler (\cite{TW-JDE2020}) established  the global classical solvability  of \eqref{1.1}
in the two-dimensional case. With respect to the boundedness of solutions to \eqref{1.1}, authors in \cite{LiWang} considered  some slightly more comprehensive variants of \eqref{1.1}, which accounts for the haptotaxis mechanisms of both cancer cells and virions, in the situation when zero-order term  has suitably strong degradation. Apart from that, existing analytical works  indicate that the virus reproduction rate relative to the lysis rate of infected cancer cells
appears  to be critical
in determining the large time behavior of the corresponding solutions at least in some simplified version of  \eqref{1.1}, inter alia upon neglecting
haptotactic cross-diffusion of infected cancer cells and renew of ECM. Indeed, for the reaction--diffusion--taxis system
\begin{equation}\label{1.2}
 \left\{
 \begin{array}{lll}
 u_{t}=D_{u}\Delta u-\xi_u\nabla \cdot(u\nabla v)+\mu_u u(1-u)-\rho uz,&&x\in\Omega,~t>0,\\
 v_{t}=-(\alpha_u u+\alpha_w w)v,&&x\in\Omega,~t>0,\\
 w_{t}=D_w\Delta w-w +uz,&&x\in\Omega,~t>0,\\
 z_{t}=D_z\Delta z-z-uz+\beta w,&&x\in\Omega,~t>0,
 \end{array}
 \right.	
 \end{equation}
 it is shown in \cite{TW-DCDS} that if $\beta>1$, then for any reasonably regular initial data  satisfying $\overline{u_0}>\frac{1}{\beta-1}$
  the global classical solution of \eqref{1.2} with $\rho=0,\mu_u=0$ must  blow up in infinite time, which is also implemented by the result on boundedness in the case when  $\overline{u_0}<\frac{1}{(\beta-1)_{+}}$ and $v_0\equiv 0$ for any $\beta>0$. Beyond the latter,  it was proved that
  when $\rho>0$ and $\mu_u=0$,  the first solution component $u$ of \eqref{1.2} possesses a positive lower bounds whenever $0<\beta<1$ and  the initial data $u_0\not\equiv 0$ (\cite{TW-EJAM}).
Furthermore, as a extension of above outcome, the asymptotic behavior of solution  was investigated in \cite{TW-NA} if $0<\beta<1$. It is remarked that for  system \eqref{1.2} with $\mu_u>0$ and $0<\beta<1$,   the convergence properties of the corresponding solutions was also discussed in \cite{Chen}. We would like to mention
that as the complementing  results of  \cite{TW-NA},     the recent paper \cite{TW-PRSE} reveals that for any  prescribed level
$\gamma\in (0,\frac{1}{(\beta-1)_{+}})$, the corresponding solution of \eqref{1.2} with $\mu_u=0,\rho\geq 0,\beta>0$
will approach the constant equilibrium $(u_\infty, 0, 0, 0)$ asymptotically with some $u_\infty>0$  whenever
the initial deviation from homogeneous distribution $(\gamma, 0, 0, 0)$ is suitably small.

The purpose of this work is to investigate the dynamical features of the models involving the simultaneous haptotactic  processes
 of both uninfected and infected  cancer cells when the virus replication rate $\beta<1$.
 To this end, we are concerned with the comprehensive haptotactic cross-diffusion systems of the form
 \begin{equation}\label{1.3}
 \left\{\begin{array}{ll}
  u_t=D_u\Delta  u-\xi_u\nabla\cdot(u\nabla v)+\mu_u u(1-u)-\displaystyle\rho uz,&
x\in \Omega, t>0,\\
\disp{w_t=D_w\Delta  w-\xi_w\nabla\cdot(w\nabla v)-  w+\displaystyle\rho uz}, &
x\in \Omega, t>0,\\
\disp{ v_t=- (\alpha_u u+\alpha_w w)v},&
x\in \Omega, t>0,\\
\disp{z_t=D_z\Delta z-\delta_z z-\displaystyle\rho uz+\beta w}, &
x\in \Omega, t>0,\\
\disp{(D_u\nabla  u-\xi_u u\nabla v)\cdot \nu=(D_w\nabla  w-\xi_w w\nabla v)\cdot \nu=\nabla z\cdot \nu=0},&
x\in \partial\Omega, t>0,\\
\disp{u(x,0)=u_0(x)},w(x,0)=w_0(x), v(x,0)=v_0(x),z(x,0)=z_0(x), & x\in \Omega
 \end{array}\right.
\end{equation}
in a smoothly bounded domain $\Omega\subset \mathbb{R}^2$. To import the precise framework underlying the basic theory from
\cite{TW-JDE2020} we shall henceforth assume that
 \begin{equation}\label{1.4}
\left\{
\begin{array}{ll}
\displaystyle{u_0,w_0,z_0~ \hbox{and}\, v_0 ~\hbox{are  nonnegative functions from}~C^{2+\vartheta}(\bar{\Omega})~
\hbox{for some}~\vartheta\in (0,1),} \\
\displaystyle{\mbox{with}~u_0\not\equiv0,~w_0\not\equiv0,~z_0\not\equiv0,~v_0\not\equiv0~\hbox{and}~\frac{\partial w_0}{\partial\nu}=0~~\mbox{on}~~\partial\Omega.} \\
\end{array}
\right.
\end{equation}
 Hence the outcome of \cite{TW-JDE2020} asserts the global existence of a unique classical solution $(u,v,w,z)$ to \eqref{1.3}.
 Our main results reveal that whenever $\beta<1$,  $(u,v,w,z)$ is uniformly bounded and exponentially
 converges  to  the constant equilibrium $(1, 0, 0, 0)$ in the topology $(L^\infty(\Omega))^4$ in a large
time limit, which can be stated as follows
\begin{theorem}\label{Theorem 1.1}
Let $\Omega \subset \mathbb{R}^2$ be a bounded domain with smooth boundary,~$D_u,D_w,D_z,\xi_u,\xi_w$,
$\mu_u,\rho,\alpha_u,\alpha_w,\delta_z$ are positive parameters, and suppose that $0<\beta<1$. Then system \eqref{1.3} admits a unique global classic positive solution satisfying
\begin{equation}\label{1.5}
\sup\limits_{t>0}\left\{\|u(\cdot,t)\|_{L^{\infty}(\Omega)}+\|v(\cdot,t)\|_{L^{\infty}(\Omega)}+\|w(\cdot,t)\|_{L^{\infty}(\Omega)}+
\|z(\cdot,t)\|_{L^{\infty}(\Omega)}\right\}<\infty.
\end{equation}
Moreover there exist positive constants $\eta,\varrho,\gamma_1$, $\gamma_2$ and  $C>0$ such that
\begin{equation}\label{1.6}
\|u(\cdot,t)-1\|_{L^{\infty}(\Omega)}\leq Ce^{-\eta t},	
\end{equation}
\begin{equation}\label{1.7}
\|w(\cdot,t)\|_{L^{\infty}(\Omega)}\leq C e^{-\varrho t},	
\end{equation}
\begin{equation}\label{1.8}
\|z(\cdot,t)\|_{L^{\infty}(\Omega)}\leq Ce^{-\gamma_1t}	
\end{equation}
as well as
\begin{equation}\label{1.9}
\|v(\cdot,t)\|_{L^{\infty}(\Omega)}\leq Ce^{-\gamma_2 t}.
\end{equation}

\end{theorem}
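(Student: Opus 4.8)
The plan is to build everything on the single structural feature that makes $\beta<1$ special: adding the $w$- and $z$-equations and integrating over $\Omega$, the two $\rho uz$ contributions cancel and the no-flux conditions kill the diffusion and haptotaxis terms, so that
\[
\frac{d}{dt}\int_\Omega (w+z) = -(1-\beta)\int_\Omega w - \delta_z\int_\Omega z \le -c_0\int_\Omega (w+z),\qquad c_0:=\min\{1-\beta,\delta_z\}>0 .
\]
This already yields $\int_\Omega(w+z)(\cdot,t)\le Ce^{-c_0 t}$, i.e.\ exponential decay of $\|w\|_{L^1}$ and $\|z\|_{L^1}$ before any regularity is invoked, and since both are nonnegative the decay is inherited separately. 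Everything else is organized around upgrading this $L^1$-decay and transferring it to $u$ and $v$.

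First I would settle the uniform boundedness \eqref{1.5} via the quasi-Lyapunov strategy of \cite{TW-JDE2020}. Testing the $u$- and $w$-equations with $1+\ln u$ and $1+\ln w$ produces $\frac{d}{dt}\int_\Omega(u\ln u+w\ln w)$ together with the dissipation $-D_u\int|\nabla u|^2/u-D_w\int|\nabla w|^2/w$, while the two haptotaxis terms integrate by parts into $\xi_u\int\nabla u\cdot\nabla v+\xi_w\int\nabla w\cdot\nabla v$. Absorbing these by Young's inequality costs $\int u|\nabla v|^2+\int w|\nabla v|^2$, which forces a companion functional for $\nabla v$. Differentiating the $v$-equation gives
\[
\frac{d}{dt}\int_\Omega|\nabla v|^2 = -2\int_\Omega(\alpha_u u+\alpha_w w)|\nabla v|^2 - 2\int_\Omega v\,\nabla v\cdot(\alpha_u\nabla u+\alpha_w\nabla w),
\]
whose first term is coercive and whose second recouples to $\nabla u,\nabla w$. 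A suitable combination $\int_\Omega(u\ln u+w\ln w)+a\int_\Omega|\nabla v|^2$ (possibly with a higher power of $|\nabla v|$, as in the $L\log L$ framework) should satisfy a Gr\"onwall-type inequality; Moser iteration and Neumann heat-semigroup smoothing then lift the resulting $L\log L$- and $L^2$-bounds to \eqref{1.5}.

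The hard part will be exactly this coupling: $v$ solves a pure ODE in $t$, so $\nabla v$ has no diffusive self-regularization and its only damping is the coercive term $-2\int(\alpha_u u+\alpha_w w)|\nabla v|^2$. To make that damping effective I must first secure a uniform positive lower bound $u\ge\underline u>0$, the analogue of the bound recorded for the simplified system in \cite{TW-EJAM}; I would obtain it from the logistic structure of the $u$-equation together with the already-known smallness of $z$, via a pointwise comparison or Harnack argument. With $u\ge\underline u$ the mismatch terms $\int v\,\nabla v\cdot\nabla u$ and $\int v\,\nabla v\cdot\nabla w$ become controllable by the combined dissipation and the smallness of $v$, closing the boundedness estimate and simultaneously giving exponential decay of $\int_\Omega|\nabla v|^2$.

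For the convergence statements I would use the relative entropy $E(t)=\int_\Omega(u-\ln u-1)\ge0$, for which
\[
E'(t)= -D_u\int_\Omega\frac{|\nabla u|^2}{u^2} - \mu_u\int_\Omega(u-1)^2 + \xi_u\int_\Omega\frac{\nabla u}{u}\cdot\nabla v - \rho\int_\Omega(u-1)z .
\]
The logistic term supplies the coercive $-\mu_u\int(u-1)^2$; the haptotaxis term is absorbed into $D_u\int|\nabla u|^2/u^2$ at the cost of $\int|\nabla v|^2$, and the reaction term into the coercive term at the cost of $\int z^2$. Interpolating the $L^1$-decay of $z$ against the $L^\infty$-bound \eqref{1.5} gives $\int z^2\le Ce^{-\sigma t}$, and $\int|\nabla v|^2$ decays by the previous paragraph, so I expect $E'(t)\le -\kappa E(t)+Ce^{-\sigma t}$, hence $E(t)\to0$ exponentially and $\|u-1\|_{L^2}\to0$ exponentially; parabolic smoothing then yields \eqref{1.6}. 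The decays \eqref{1.7}, \eqref{1.8} follow by interpolating the exponential $L^1$-decay of $w,z$ against \eqref{1.5} to get $L^p$-decay, and then applying the semigroup representation to the $w$- and $z$-equations, whose zero-order terms $-w$ and $-\delta_z z$ provide the baseline rate and whose sources $\rho uz$ and $\beta w$ are themselves exponentially small. Finally, once $u\ge 1-\varepsilon$ for all large $t$, the representation $v(\cdot,t)=v_0\exp\!\big(-\int_0^t(\alpha_u u+\alpha_w w)\big)$ gives $\|v\|_{L^\infty}\le\|v_0\|_{L^\infty}e^{-\alpha_u(1-\varepsilon)t}$, which is \eqref{1.9}.
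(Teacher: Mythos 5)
Your starting point (adding the $w$- and $z$-equations to get exponential $L^1$-decay of $\int_\Omega(w+z)$) and your relative-entropy identity for $\int_\Omega(u-\ln u-1)$ both match the paper, but the proposal has two genuine gaps. The first is a circularity in the boundedness step. Your functional $\int_\Omega(u\ln u+w\ln w)+A\int_\Omega|\nabla v|^2$ closes only if the damping $-2\int_\Omega(\alpha_u u+\alpha_w w)|\nabla v|^2$ dominates both the Young cost $C\int_\Omega(u+w)|\nabla v|^2$ (which forces $A$ large) and the recoupling terms $A\int_\Omega v\,\nabla v\cdot\nabla u$, $A\int_\Omega v\,\nabla v\cdot\nabla w$; absorbing the latter into the entropy dissipation $\int_\Omega|\nabla u|^2/u$ costs $\frac{CA^2}{D_u}\int_\Omega u\,v^2|\nabla v|^2$, so with $A$ large you need $\|v\|_{L^\infty}$ \emph{small}, not merely bounded --- and smallness of $v$ itself requires the lower bound $u\ge\underline u$. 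You propose to get $u\ge\underline u$ from the ``already-known smallness of $z$'' by comparison or Harnack, but at that stage only $\|z\|_{L^1}$ is small: a pointwise comparison needs $\|z\|_{L^\infty}$ small, and in two dimensions one cannot pass from $L^1$-decay of $w$ to $L^\infty$-decay of $z$ through the heat semigroup, since the $L^1\to L^\infty$ rate $(t-s)^{-1}$ is not integrable; the paper first interpolates the $L^1$-decay of $w$ against its $L^\infty$-bound to get $L^2$-decay, and only then applies the semigroup (Lemma 4.1). So in your scheme boundedness needs the lower bound, the lower bound needs $\|z\|_{L^\infty}$, and $\|z\|_{L^\infty}$ needs boundedness. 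A further obstruction: flat-function comparison does not apply to the $u$-equation itself, because $-\xi_u\nabla\cdot(u\nabla v)$ contains $-\xi_u u\Delta v$ with no control on $\Delta v$; this is precisely why the paper works with $a=e^{-\chi_u v}u$, whose diffusion operator $D_ue^{-\chi_u v}\nabla\cdot(e^{\chi_u v}\nabla\cdot)$ annihilates constants (Lemma 4.2). The paper breaks the circle by proving boundedness \emph{first}, with no lower bound on $u$ and no smallness of $v$: the transformation removes $\nabla v$ from the functional entirely ($\mathcal{F}=\int_\Omega e^{\chi_u v}a\log a+\int_\Omega e^{\chi_w v}b\log b+\frac12\int_\Omega z^2$), the superlinear production terms are dominated by the $a^2\log a$-dissipation supplied by the quadratic degradation, and the term $\int_\Omega w\cdot\int_\Omega e^{\chi_w v}|\nabla b|^2/b$ is absorbed for large times using the $L^1$-decay of $w$.

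The second gap is at the end: exponential decay of $\int_\Omega|\nabla v|^2$ is not sufficient to convert $L^2$-decay of $u-1$ (or of $w$) into $L^\infty$-decay. In the variation-of-constants representation the haptotactic term is estimated by $\|e^{\sigma D_u\Delta}\nabla\cdot(u\nabla v)\|_{L^\infty}\le C\,\sigma^{-\frac12-\frac1q}e^{-\lambda_1\sigma}\|u\nabla v\|_{L^q}$, whose singularity is integrable only for $q>2$; hence you need exponential decay of $\|\nabla v\|_{L^q}$ for some $q$ exceeding the spatial dimension. Securing this is a substantial separate step in the paper (Lemma 4.9): a second quasi-Lyapunov functional $\|\nabla a\|_{L^2}^2+\|\nabla b\|_{L^2}^2+d_1\|\nabla v\|_{L^4}^4$, which itself consumes the previously established $L^p$-decay of $u-1$, $w$, $z$ and $v$ (Lemmas 4.7, 4.8, 4.1, 4.3). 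Your plan stops at $\int_\Omega|\nabla v|^2$ and invokes ``parabolic smoothing,'' which does not close in two dimensions; the same issue affects your route to the $L^\infty$-decay of $w$, whose equation carries the haptotactic flux $-\xi_w\nabla\cdot(w\nabla v)$ as well.
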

Since the third equation in \eqref{1.3} is merely an ordinary differential equation, no smoothing action on the spatial
regularity of $v$ can be expected.  To overcome the analytical difficulties arising from the latter, inter alia in the derivation of global boundedness of solutions, we accordingly introduce the variable
transformation $a=e^{-\chi_u v}u$ and $b=e^{-\chi_w v}w$, and establish a priori estimate for the solution components $a,b$ of the corresponding equivalent system \eqref{2.2} below in the space $LlogL(\Omega)$ rather than the solution components $u,w$ to \eqref{1.3}. Note that
in the evolution of density $v$ of ECM fibers the quantity $u,w$ appears via a sink term, whereas it  turns to a genuine superlinear production terms of system \eqref{2.2} in  the style of $\chi_u a(\alpha_u ae^{\chi_u v}
+ \alpha_w be^{\chi_w v})v$ and $\chi_w b(\alpha_u ae^{\chi_u v}
+ \alpha_w be^{\chi_w v})v$.
Taking advantage of the exponential decay of $w$ in  $L^1$ norm in the case $0<\beta<1$, we shall track the time evolution of
$$
\mathcal{F}(t)=\int_{\Omega}e^{\chi_u v}a(\cdot,t)\log a(\cdot,t)+\int_{\Omega}e^{\chi_w v}b(\cdot,t)\log b(\cdot,t)+\int_{\Omega}z^2(\cdot,t)
$$
with $a=e^{-\chi_u v}u$ and $b=e^{-\chi_w v}w$, which is somewhat different from the quasi-Lyapunov functional (4.11) in \cite{TW-JDE2020} where a Dirichlet integral of $\sqrt{v}$ is involved. Here the quadratic degradation term in the first
equation of \eqref{1.3} seems to be necessary. Thereafter applying  a variant of the Gagliardo--Nirenberg inequality involving certain $LlogL$-type norms and performing a Moser-type iteration, the $L^{\infty}$-bounds of solutions is derived.

In addition, our result indicates that although haptotaxis mechanism may have some important influence on the properties of the related system on
short or intermediate time scales,  the large  time behavior of  solution to  \eqref{1.3}  can essentially be
described by the corresponding haptotaxis-free system at least under the biological meaningful restriction $\beta<1$. In order to prove Theorem 1.1, a first step is to derive a  pointwise lower bound for $a:=e^{-\chi_u v}u$ (Lemma \ref{Lemma42}), which, in turn, amounts to establishing an exponential decay of $z$ with respect to the norm in $L^\infty(\Omega)$ (Lemma \ref{Lemma42}). To achieve the latter, we will make use of  $L^1(\Omega)$-decay information of $w,z$ explicitly contained in Lemma \ref{Lemma23}. Scondly, as a consequence of the former, the  exponential decay of $v$  with respect to  $L^{\infty}(\Omega)$ norm is achieved (Lemma \ref{Lemma43}), which along a  $a^{-1}$--testing procedure will provide quite weak convergence information of $u$, inter alia the integrability property of $\nabla \sqrt{a}$ in $L^2((0,\infty); L^2(\Omega))$ (Lemma \ref{Lemma44}). The next step will consist of
verifying the integrability  of  $\nabla v$ in  $L^2((0,\infty); L^2(\Omega))$ rather than  that of $a_{t}$ (Lemma \ref{Lemma45}),  which  will  turn out to
be sufficient a condition  in the derivation of exponential decay property  of $\|u(\cdot,t)-1\|_{L^p(\Omega)}$. Indeed, this integrability property of  $\nabla v$ enables us to derive an exponential decay of
$\int_{\Omega}|\nabla v|^2$ (Lemma \ref{Lemma46}), upon which and through a testing procedure, it is shown
that the convergence property of $u$ actually takes place  
in the type of \eqref{4.22} (Lemma \ref{Lemma47}). Further, upon the above decay properties, we are able to verify that  $\int_{\Omega}|\nabla v|^4$   decays exponentially by means of the suitable quasi-Lyapunov functional (Lemma \ref{Lemma49}). At this position, thanks  to the integrability exponent  in $\int_{\Omega}|\nabla v|^4$
exceeding the considered spatial dimension $n=2$, the desired decay property stated in Theorem 1.1 can be exactly achieved.

This paper will be organized as follows: Section 2 will introduce an equivalent system of \eqref{1.3} and give out some basic priori estimates of classical solutions thereof, inter alia the weak decay properties of $w,z$. Section 3  will focus on the construction of an entropy-type functional, which entails certain $LlogL$-type norms and thereby  allows us to establish the $L^{\infty}$-bounds.  Finally, starting from the exponential decay of  quantities $w,z$ with respect to the norm in $L^1(\Omega)$, we established the exponential convergence properties  of the solutions in Section 4.

\section{Preliminaries}
Let us firstly recall the result in \cite {TW-JDE2020} which  warrants the global smooth solvability of problem \eqref{1.3}.
\begin{lemma}\label{lemma2.1} Let $\Omega \subset \mathbb{R}^2$ be a bounded domain with smooth boundary,~$D_u,D_w,D_z,\xi_u,\xi_w$,
$\mu_u,\rho,\alpha_u,\alpha_w,\delta_z,\beta$ are positive parameters. Then  for any choice of $(u_0,v_0,w_0,z_0)$ fulfilling \eqref{1.4}, the problem \eqref{1.3} \eqref{1.4} possesses a uniquely determined classical solution
 $(u,v,w,z)\in (C^{2,1}(\overline{\Omega} \times [0,\infty)))^4$
for which $u>0,w>0,z>0$ and $v\geq 0$.
\end{lemma}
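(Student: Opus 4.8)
The plan is to establish local solvability together with a blow-up criterion, and then to rule out blow-up on every finite time interval through a chain of a priori estimates, the pivotal device being the substitution $a:=e^{-\chi_u v}u$, $b:=e^{-\chi_w v}w$ with $\chi_u:=\xi_u/D_u$, $\chi_w:=\xi_w/D_w$, which removes the haptotactic cross-diffusion. First I would set up local existence: recovering $v$ from its ODE via $v(\cdot,t)=v_0\,\exp\!\big(-\int_0^t(\alpha_u u+\alpha_w w)\,ds\big)$, system \eqref{1.3} reduces to a quasilinear parabolic problem for $(u,w,z)$ to which a standard fixed-point (Amann-type) argument applies, producing a unique classical solution on a maximal interval $[0,T_{\max})$ together with the extensibility criterion that $T_{\max}<\infty$ forces $\limsup_{t\uparrow T_{\max}}\big(\|u(\cdot,t)\|_{L^\infty(\Omega)}+\|w(\cdot,t)\|_{L^\infty(\Omega)}+\|z(\cdot,t)\|_{L^\infty(\Omega)}+\|v(\cdot,t)\|_{W^{1,\infty}(\Omega)}\big)=\infty$. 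Positivity of $u,w,z$ and nonnegativity of $v$ follow from the maximum principle and the explicit formula for $v$, which moreover yields $0\le v\le\|v_0\|_{L^\infty(\Omega)}$ at once. It thus remains to bound these four norms on each finite time interval.

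Next come the elementary mass bounds. Integrating the $u$-equation and using $\int_\Omega u^2\ge|\Omega|^{-1}(\int_\Omega u)^2$ gives a logistic differential inequality for $\int_\Omega u$, hence a uniform $L^1$ bound; adding the $w$- and $z$-equations produces $\frac{d}{dt}\int_\Omega(w+z)=(\beta-1)\int_\Omega w-\delta_z\int_\Omega z$, so that $\int_\Omega(w+z)$ grows at most exponentially for any $\beta>0$. The serious obstruction is the passage from these $L^1$ bounds to $L^\infty$ bounds, which is delicate precisely because $v$ enjoys no smoothing and its gradient feeds back into the drift. In the transformed variables $a$ satisfies, under homogeneous Neumann data,
\[
a_t = D_u\Delta a + D_u\chi_u\nabla v\cdot\nabla a + \chi_u\big(\alpha_u e^{\chi_u v}a+\alpha_w e^{\chi_w v}b\big)v\,a + \mu_u a\big(1-e^{\chi_u v}a\big)-\rho a z,
\]
with $b$ solving an analogous equation; note that the ECM sink has turned into the superlinear production term $\chi_u(\alpha_u e^{\chi_u v}a+\alpha_w e^{\chi_w v}b)v\,a$.

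This is the main difficulty, and I would confront it in two dimensions through the entropy functional $\mathcal F(t)=\int_\Omega e^{\chi_u v}a\log a+\int_\Omega e^{\chi_w v}b\log b+\int_\Omega z^2$ recorded in the introduction. Differentiating $\mathcal F$ amounts to testing the $a$- and $b$-equations by the weighted logarithms $e^{\chi_u v}(1+\log a)$, $e^{\chi_w v}(1+\log b)$ and the $z$-equation by $2z$; the weights are tailored so that the superlinear production is cancelled by the resulting cross terms, while the quadratic degradation $-\mu_u e^{\chi_u v}a^2$ (here $\mu_u>0$ enters) together with the bound $0\le v\le\|v_0\|_{L^\infty(\Omega)}$ absorb the remaining contributions, leaving the nonnegative dissipation $\int_\Omega|\nabla\sqrt a|^2+\int_\Omega|\nabla\sqrt b|^2+\int_\Omega|\nabla z|^2$. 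Getting these signs and coefficients to close is the heart of the argument.

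Finally, a differential inequality for $\mathcal F$ yields an $L\log L$ bound on $a,b$ together with space-time $L^2$ control of $\nabla\sqrt a,\nabla\sqrt b$. A Gagliardo--Nirenberg inequality of $L\log L$ type, valid in the planar case, then converts this into $L^p(\Omega)$ bounds for every finite $p$, after which a Moser iteration applied to the $a$-, $b$- and $z$-equations produces uniform $L^\infty$ bounds for $u,w,z$. Once $u,w,z\in L^\infty$, parabolic $L^p$-estimates give $\nabla u,\nabla w\in L^\infty_{\mathrm{loc}}([0,\infty);L^q(\Omega))$; differentiating the $v$-ODE in space and invoking Gr\"onwall then controls $\|\nabla v\|_{L^\infty(\Omega)}$ on finite intervals. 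This closes the extensibility criterion, forcing $T_{\max}=\infty$; Schauder theory upgrades the solution to $\big(C^{2,1}(\overline\Omega\times[0,\infty))\big)^4$, and uniqueness is inherited from the local construction.
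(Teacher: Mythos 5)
First, a structural point: the paper does not prove Lemma \ref{lemma2.1} at all --- it is quoted from \cite{TW-JDE2020}, and its validity for \emph{every} $\beta>0$ matters, because the present paper invokes it before the restriction $\beta<1$ is ever imposed (that restriction enters only for boundedness and asymptotics). Your local-existence scaffolding (integrating the $v$-equation explicitly, a fixed-point argument for $(u,w,z)$, an extensibility criterion, positivity by the maximum principle) is a reasonable sketch of what is actually done in \cite{TW-JDE2020}. The problem is the a priori estimate that is supposed to rule out finite-time blow-up.

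There the proposal has a genuine gap. Your claim that in the evolution of $\mathcal F(t)=\int_\Omega e^{\chi_u v}a\log a+\int_\Omega e^{\chi_w v}b\log b+\int_\Omega z^2$ ``the superlinear production is cancelled by the resulting cross terms'' is not correct: as the computation \eqref{3.5} in the paper shows, differentiating the $b$-part leaves the uncancelled term $\chi_w\int_\Omega w(\alpha_u u+\alpha_w w)v$, whose worst contribution is of order $\int_\Omega b^2$. Unlike the $a$-equation, where the quadratic degradation $\mu_u\int_\Omega e^{2\chi_u v}a^2\log a$ absorbs $\int_\Omega a^2$, the $b$-equation carries only the linear damping $-b$, so $\int_\Omega b^2$ can only be handled via Gagliardo--Nirenberg, $\int_\Omega b^2\le C_g\bigl(\int_\Omega b\bigr)\int_\Omega e^{\chi_w v}\frac{|\nabla b|^2}{b}+C_g\bigl(\int_\Omega b\bigr)^2$, and absorbed into the dissipation $D_w\int_\Omega e^{\chi_w v}\frac{|\nabla b|^2}{b}$ \emph{only if} $\int_\Omega b$ is small. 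In the paper this smallness is exactly where $\beta<1$ enters: Lemma \ref{Lemma23} gives $\int_\Omega w\le Ce^{-\delta t}$, and Lemma \ref{Lemma32} waits until a large time $t_0$ so that $c_1C_g\int_\Omega w\le \frac{D_w}{2}$. For $\beta\ge 1$ one only has $\int_\Omega w\le m_w$, which is bounded but not small, the differential inequality for $\mathcal F$ does not close, and your scheme yields no bound even on finite time intervals --- yet Lemma \ref{lemma2.1} asserts global existence for all $\beta>0$. (Even for $\beta<1$ your argument leaves the interval $[0,t_0]$ uncovered, which is harmless in the paper only because global existence is already known there.) This is precisely why \cite{TW-JDE2020} works with a different quasi-Lyapunov functional, involving a Dirichlet integral of $\sqrt v$, whose additional dissipation controls the production terms without any smallness of $\int_\Omega w$; the functional you adopted is the one this paper designed for the large-time boundedness statement under $\beta<1$, and it cannot substitute for the existence proof.
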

Following the variable of change used in related literature
(\cite{FFH,LiWang,TW-NA,TW-JDE}), which can conveniently reformulate the haptotactic interaction in \eqref{1.3}, we  define
$\chi_u:=\frac{\xi_u}{D_u}$ and $\chi_w:=\frac{\xi_w}{D_w}$
and set
\begin{equation}\label{2.1}
a:=e^{-\chi_u v}u~~\hbox{and}~~b:=e^{-\chi_w v}w.
\end{equation}
Then we transform  \eqref{1.3} into an equivalent system as below
\begin{equation}
\label{2.2}
\left\{\begin{array}{ll}
a_t=D_u e^{-\chi_u v}\nabla\cdot(e^{\chi_u v}\nabla a)+f(a,b,v,c), & x\in\Omega,t>0,\\
b_t=D_w e^{-\chi_w v}\nabla\cdot(e^{\chi_w v}\nabla b)+g(a,b,v,c), &  x\in\Omega,t>0,\\
v_t=- (\alpha_u ae^{\chi_u v}
+ \alpha_w be^{\chi_w v})v, &   x\in\Omega,t>0,\\
z_t=D_z \Delta z-\delta_z z-\rho_z uz+\beta w, &  x\in\Omega,t>0,\\
\displaystyle\frac{ \partial a}{\partial\nu}=\frac{ \partial b}{\partial\nu}=\frac{ \partial z}{\partial\nu}=0,
  &   x\in\partial\Omega,t>0,\\
a(x,0)=u_0(x) e^{-\chi_u v_0(x)}, ~b(x,0)=w_0(x) e^{-\chi_w v_0(x)}, &  x\in \Omega,\\
 v_0(x,0)= v_0(x),~~ z(x,0)=z_0(x), &  x\in \Omega
  \end{array}
 \right.
 \end{equation}
with
$$
f(a,b,v,c):=\mu_u a(1-ae^{\chi_u v})-\displaystyle\rho az+\chi_u a(\alpha_u ae^{\chi_u v}
+ \alpha_w be^{\chi_w v})v,
$$
as well as
$$
g(a,b,v,c):=- b+\displaystyle\rho aze^{(\chi_u-\chi_w)v}
+
\chi_w b(\alpha_u ae^{\chi_u v}
+ \alpha_w be^{\chi_w v})v.
$$

In  our subsequent analysis, unless
otherwise stated we shall assume that $ (a, v, b, z)$ is the global classical solution to \eqref{2.2} addressed in Lemma 2.1.

The damping effects of quadratic degradation in the first equation in \eqref{1.3} will be important for us to verify  the global boundedness of the solutions.
Let we first apply straightforward argument to achieve the following basic $L^1$-bounds for $u,w$ and $z$, which is also valid for the solution components $a,b$ of \eqref{2.2}.
\begin{lemma}\label{Lemma22}
For all $t>0$,~the solution $(u,w,v,z)$ satisfies
\begin{equation}\label{2.3}
\int_{\Omega}u(\cdot,t)\leq \max\left\{\int_{\Omega}u_0,|\Omega|\right\}:= m_u,	
\end{equation}
and
\begin{equation}\label{2.4}
v\leq \|v_0\|_{L^{\infty}(\Omega)}:= m_v
\end{equation}
and
\begin{equation}\label{2.5}
\int_{\Omega}w(\cdot,t)\leq \max\left\{\|u_0\|_{L^1(\Omega)}+\|w_0\|_{L^1(\Omega)},\frac{|\Omega|\mu_u}{\min\{1,\mu_u\}}\right\}:= m_w,	
\end{equation}
as well as
\begin{equation}\label{2.6}
\int_{\Omega}z(\cdot,t)\leq 	\max\left\{\int_{\Omega}z_0,\frac{\beta m_w}{\delta_z}\right\}:= m_z.
\end{equation}	
\end{lemma}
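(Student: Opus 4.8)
The plan is to establish the four $L^1$-bounds by integrating each relevant equation over $\Omega$, exploiting the no-flux boundary conditions to annihilate the divergence terms, and then applying elementary ODE comparison arguments to the resulting scalar differential inequalities. I emphasize at the outset that, by \eqref{2.1}, the bound \eqref{2.4} on $v$ is immediate: since $v_t=-(\alpha_u u+\alpha_w w)v\le 0$ pointwise (as $u,w\ge 0$ by Lemma \ref{lemma2.1}), the quantity $v$ is nonincreasing in $t$ at each $x$, whence $0\le v(\cdot,t)\le v_0\le\|v_0\|_{L^\infty(\Omega)}=m_v$. This also shows $v$ stays bounded independently of the taxis structure, which is the one estimate requiring no integration.

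For \eqref{2.3}, I would integrate the first equation of \eqref{1.3} over $\Omega$. The diffusion term $D_u\Delta u$ and the haptotaxis term $-\xi_u\nabla\cdot(u\nabla v)$ both integrate to zero because the combined no-flux condition $(D_u\nabla u-\xi_u u\nabla v)\cdot\nu=0$ holds on $\partial\Omega$. Discarding the nonpositive term $-\rho uz$, I obtain
\begin{equation*}
\frac{d}{dt}\int_\Omega u\le \mu_u\int_\Omega u(1-u)\le \mu_u\int_\Omega u-\frac{\mu_u}{|\Omega|}\Big(\int_\Omega u\Big)^2,
\end{equation*}
where the last step uses the Cauchy--Schwarz inequality $\int_\Omega u^2\ge|\Omega|^{-1}(\int_\Omega u)^2$. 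Writing $y(t)=\int_\Omega u$, this reads $y'\le\mu_u y(1-y/|\Omega|)$, a logistic inequality forcing $y(t)\le\max\{y(0),|\Omega|\}=m_u$ by a standard comparison principle. Estimate \eqref{2.6} is similar but simpler: integrating the $z$-equation kills the Laplacian, drops the nonpositive $-\rho uz$, and leaves $\frac{d}{dt}\int_\Omega z\le-\delta_z\int_\Omega z+\beta\int_\Omega w$; once \eqref{2.5} is in hand, the constant forcing $\beta m_w$ yields $\int_\Omega z\le\max\{\int_\Omega z_0,\beta m_w/\delta_z\}=m_z$ by the same linear comparison.

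The only genuinely coupled step is \eqref{2.5}, and this is where I expect the main obstacle, since the production term $+\rho uz$ in the $w$-equation is not sign-controlled on its own and must be cancelled against the consumption $-\rho uz$ in the $u$-equation. The natural device is to test the combined quantity: adding the $u$- and $w$-equations and integrating, the taxis and diffusion terms vanish by the two flux conditions, the $\pm\rho uz$ terms cancel exactly, and I am left with
\begin{equation*}
\frac{d}{dt}\int_\Omega(u+w)=\mu_u\int_\Omega u(1-u)-\int_\Omega w\le\mu_u|\Omega|-\min\{1,\mu_u\}\int_\Omega(u+w),
\end{equation*}
after estimating $\mu_u u(1-u)\le\mu_u u-\mu_u u^2$ and absorbing; here one uses $\mu_u u(1-u)-\mu_u u\le 0$ together with $-w\le-\min\{1,\mu_u\}w$ and $\mu_u u(1-u)\le\tfrac{\mu_u}{4}|\Omega|$ type bounds to produce the linear dissipation coefficient $\min\{1,\mu_u\}$ and constant source $\mu_u|\Omega|$. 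A linear Grönwall/comparison argument then gives $\int_\Omega(u+w)\le\max\{\|u_0\|_{L^1}+\|w_0\|_{L^1},\,\mu_u|\Omega|/\min\{1,\mu_u\}\}$, and since $w\le u+w$ this is exactly $m_w$. The delicate point is bookkeeping the constants so that the quadratic term is handled in a way compatible with the stated bound; the cancellation of the $\rho uz$ contributions is the conceptual key that makes the mass of infected plus uninfected cells controllable even though neither is individually.
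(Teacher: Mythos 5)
Your proposal is correct and follows essentially the same route as the paper: pointwise monotonicity of $v$ for \eqref{2.4}, integration of the $u$-equation with Cauchy--Schwarz and logistic ODE comparison for \eqref{2.3}, adding the $u$- and $w$-equations so that the $\pm\rho uz$ terms cancel and then a linear comparison for \eqref{2.5} (this is precisely the paper's inequality \eqref{2.7}), and integration of the $z$-equation with the bound $m_w$ for \eqref{2.6}. The only cosmetic difference is your slightly tangled description of how the dissipation coefficient $\min\{1,\mu_u\}$ arises; the clean pointwise bound is $\mu_u u(1-u)\leq \mu_u-\mu_u u$, which is what the paper implicitly uses.
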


\begin{proof}
 It is easy to see that \eqref{2.3} can be derived through an integration of the first equation in  \eqref{1.3} along with Cauchy--Schwarz's inequality, and
\eqref{2.4} is a direct consequence of $(\alpha_u u+\alpha_w w)v\geq 0$ due to the nonnegativity of $u,w$ and $v$.

In addition, integrating the $w$-equation as well as $u$-equation respectively and  adding the corresponding results, we  then have
\begin{equation}\label{2.7}
\frac{d}{dt}\left(\int_{\Omega}u+\int_{\Omega}w\right)+\int_{\Omega}w+\mu_u\int_{\Omega}u\leq |\Omega|	\mu_u,
\end{equation}
which  readily leads to \eqref{2.5} upon an ODE comparison. At last, thanks to \eqref{2.5}, \eqref{2.6} clearly results from the integration of $z$-equation in  \eqref{1.3}.  	
\end{proof}
Beyond that, making use of the restriction $\beta\in (0,1)$, one can  derive the decay properties of the solution components $w$  and $z$  with respect to $L^1(\Omega)$, which will be used later on.

\begin{lemma}\label{Lemma23}
Suppose that $0<\beta<1$, then there exists constant $C>0$ such that
\begin{equation}\label{2.8}
\int_{\Omega}w(\cdot,t)+\int_{\Omega}z(\cdot,t)\leq C e^{-\delta t}~~\hbox{for all $t>0$.}	
\end{equation}
with $\delta=\min\{1-\beta,\delta_z\}$.	
\end{lemma}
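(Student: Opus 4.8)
The plan is to exploit the fact that the only nonlinear interaction coupling the $w$- and $z$-equations in \eqref{1.3}, namely the infection/lysis term $\rho uz$, enters the two equations with opposite signs, so that it is annihilated upon spatial integration and addition. First I would integrate the $w$-equation over $\Omega$; the no-flux boundary condition $(D_w\nabla w-\xi_w w\nabla v)\cdot\nu=0$ ensures that the divergence contributions $\int_\Omega[D_w\Delta w-\xi_w\nabla\cdot(w\nabla v)]$ vanish by the divergence theorem, leaving the ODE
\begin{equation*}
\frac{d}{dt}\int_\Omega w=-\int_\Omega w+\rho\int_\Omega uz.
\end{equation*}
Similarly, integrating the $z$-equation and using $\nabla z\cdot\nu=0$ gives
\begin{equation*}
\frac{d}{dt}\int_\Omega z=-\delta_z\int_\Omega z-\rho\int_\Omega uz+\beta\int_\Omega w.
\end{equation*}

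Adding these two identities, the cross term $\rho\int_\Omega uz$ cancels exactly, and I obtain
\begin{equation*}
\frac{d}{dt}\left(\int_\Omega w+\int_\Omega z\right)=-(1-\beta)\int_\Omega w-\delta_z\int_\Omega z.
\end{equation*}
Here is where the hypothesis $0<\beta<1$ becomes decisive: it guarantees $1-\beta>0$, so that both coefficients on the right-hand side are strictly positive. Setting $\delta:=\min\{1-\beta,\delta_z\}$ and invoking the nonnegativity of $w$ and $z$ from Lemma \ref{lemma2.1}, I can bound the right-hand side above by $-\delta\left(\int_\Omega w+\int_\Omega z\right)$, which yields the differential inequality
\begin{equation*}
\frac{d}{dt}\left(\int_\Omega w+\int_\Omega z\right)\le-\delta\left(\int_\Omega w+\int_\Omega z\right).
\end{equation*}

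A direct ODE comparison (Gr\"onwall) then gives $\int_\Omega w+\int_\Omega z\le\big(\int_\Omega w_0+\int_\Omega z_0\big)e^{-\delta t}$ for all $t>0$, which is precisely \eqref{2.8} with $C=\int_\Omega w_0+\int_\Omega z_0$. I do not expect a serious analytical obstacle here; the whole argument rests on the structural observation that the coupling $\rho uz$ is mass-preserving across the $w$- and $z$-compartments, so that the combined mass $\int_\Omega(w+z)$ satisfies a closed, sign-definite balance law as soon as $\beta<1$. The only point requiring any care is the justification that the diffusion and haptotaxis fluxes integrate to zero, which follows immediately from the prescribed no-flux boundary conditions in \eqref{1.3}, and the verification that the decay rate produced by the two separate linear sinks is governed by the smaller of $1-\beta$ and $\delta_z$.
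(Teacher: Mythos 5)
Your proof is correct and is essentially the paper's own argument: integrating the $w$- and $z$-equations, cancelling the $\rho\int_\Omega uz$ coupling term, arriving at the identity $\frac{d}{dt}\left(\int_\Omega w+\int_\Omega z\right)+(1-\beta)\int_\Omega w+\delta_z\int_\Omega z=0$, and concluding by ODE comparison with $\delta=\min\{1-\beta,\delta_z\}$ and $C=\int_\Omega w_0+\int_\Omega z_0$. The only difference is that you spell out the vanishing of the flux terms and the use of nonnegativity explicitly, which the paper leaves implicit.
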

\begin{proof}
We use the  $z$-equation and  $w$-equation to compute
\begin{equation}\label{2.9}
\frac{d}{dt}\left(\int_{\Omega}w+\int_{\Omega}z\right)+(1-\beta)\int_{\Omega}w+\delta_z\int_{\Omega}z=0.
\end{equation}
Due to $0<\beta<1$, this readily implies that
\begin{equation*}
\left(\int_{\Omega}w(\cdot,t)+\int_{\Omega}z(\cdot,t)\right)\leq \left(\int_{\Omega}w_0+\int_{\Omega}z_0\right)e^{-\min\{1-\beta,\delta_z\}t}	
\end{equation*}
and hence  \eqref{2.8} is valid with $C=\int_{\Omega} w_0+\int_{\Omega}z_0$. 	
\end{proof}

\section{Global boundedness}

   As in \cite{LiWang,TW-JDE}, the crucial step in establishing a priori $L^\infty$ bounds for $a,b$ and $z$  is to derive estimates for $a$ and $b$ in $LlogL$, which turn out to be consequences of a quasi-energy structure associated with the system \eqref{2.2} rather than the system \eqref{1.3}. Indeed,  making appropriate use of the logistic degradation in the first equation of \eqref{2.2} and inter alia the $L^1$-decay property of the solution component $w$, one can verify that functional
$$
\mathcal{F}(t):=\int_{\Omega}e^{\chi_u v}a(\cdot,t)\log a(\cdot,t)+\int_{\Omega}e^{\chi_w v}b(\cdot,t)\log b(\cdot,t)+\frac 12\int_{\Omega}z^2(\cdot,t),
$$
which does not involve the Dirichlet integral of $\sqrt{v}$,
actually possesses a certain quasi-dissipative property for all $t>t_0$ with constant $t_0>1$ suitably chosen.
\begin{lemma}\label{Lemma31}
  For any $\varepsilon>0$, there exists $C(\varepsilon)>0$ such that
\begin{equation}\label{3.1}
\begin{array}{rl}
&\displaystyle\frac{d}{dt}\int_{\Omega}e^{\chi_u v}a\log a+\int_{\Omega}e^{\chi_u v}a\log a+D_u\int_{\Omega}e^{\chi_u v}\frac{|\nabla a|^2}{a}
+\frac{\mu_u}{2}\int_{\Omega}e^{2\chi_u v}a^2\log a\\[2mm]
\leq& \displaystyle \varepsilon\int_{\Omega}b^2+C(\varepsilon)
\end{array}
\end{equation}
for all $t>0$.	
\end{lemma}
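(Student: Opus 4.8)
The plan is to differentiate $\int_\Omega e^{\chi_u v}a\log a$ directly and to show that, once the diffusion term is integrated by parts, every remaining contribution is either one of the four nonnegative quantities already appearing on the left-hand side of \eqref{3.1}, a term of the form $\varepsilon\int_\Omega b^2$, or a constant. First I would apply the product rule, writing $\partial_t(e^{\chi_u v}a\log a)=\chi_u v_t\,e^{\chi_u v}a\log a+e^{\chi_u v}(\log a+1)a_t$, and substitute the equations for $a_t$ and $v_t$ from \eqref{2.2}. Integrating the diffusion part $D_u(\log a+1)\nabla\cdot(e^{\chi_u v}\nabla a)$ by parts and using the no-flux condition $\partial a/\partial\nu=0$ produces exactly $-D_u\int_\Omega e^{\chi_u v}|\nabla a|^2/a$, which matches the gradient term on the left.

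The decisive algebraic point, and the step I expect to be the crux, is a cancellation. The contribution of $v_t$ through the weight, namely $-\chi_u\int_\Omega(\alpha_u ae^{\chi_u v}+\alpha_w be^{\chi_w v})v\,e^{\chi_u v}a\log a$, combines with the haptotactic drift part of $f$, namely $\chi_u\int_\Omega a(\alpha_u ae^{\chi_u v}+\alpha_w be^{\chi_w v})v\,e^{\chi_u v}(\log a+1)$, and the two $\log a$-proportional pieces cancel, leaving only the quadratic production
$$\chi_u\int_\Omega e^{\chi_u v}a(\alpha_u ae^{\chi_u v}+\alpha_w be^{\chi_w v})v=\chi_u\alpha_u\int_\Omega e^{2\chi_u v}a^2v+\chi_u\alpha_w\int_\Omega e^{(\chi_u+\chi_w)v}abv.$$
This is precisely why the weight $e^{\chi_u v}$ is built into the functional: it removes the most dangerous superlinear term before $\log a$ can spoil the estimate.

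Next I would exploit the logistic degradation. The term $\mu_u a(1-ae^{\chi_u v})$ contributes the sink $-\mu_u\int_\Omega e^{2\chi_u v}a^2\log a$, so after keeping $\tfrac{\mu_u}{2}\int_\Omega e^{2\chi_u v}a^2\log a$ on the left a reservoir $-\tfrac{\mu_u}{2}\int_\Omega e^{2\chi_u v}a^2\log a$ remains. Using $v\le m_v$ from \eqref{2.4}, the first production integral is bounded by $\chi_u\alpha_u m_v\int_\Omega e^{2\chi_u v}a^2$, and Young's inequality applied to the bilinear integral yields, for a free parameter $\delta>0$, a piece $\tfrac{1}{2\delta}\chi_u\alpha_w m_v e^{(\chi_u+\chi_w)m_v}\int_\Omega b^2$ together with a constant multiple of $\int_\Omega a^2$; choosing $\delta$ large makes the former at most $\varepsilon\int_\Omega b^2$. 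It then remains to absorb the resulting $\int_\Omega a^2$ contributions, and the leftover $(\mu_u+1)\int_\Omega e^{\chi_u v}a\log a$ arising from the added $\int_\Omega e^{\chi_u v}a\log a$ together with the $\log a$-part of the $\mu_u a$-term, into the reservoir. For this I would use the elementary facts that $a^2\le\tfrac{1}{\log A}a^2\log a$ and $a\log a\le\tfrac{1}{A}a^2\log a$ for $a\ge A$, while both integrands stay bounded on $\{a<A\}$; taking $A$ large makes each absorbed coefficient as small as the reservoir permits, at the cost of an additive constant $C(\varepsilon)$.

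Finally I would dispose of the lower-order remainders. The term $-\rho az$ gives $-\rho\int_\Omega e^{\chi_u v}az(\log a+1)$, whose $-\rho\int_\Omega e^{\chi_u v}az$ part is nonnegative and simply discarded, while the pointwise bound $-a\log a\le 1/e$ with $z\ge0$ yields $-\rho\int_\Omega e^{\chi_u v}az\log a\le\tfrac{\rho}{e}e^{\chi_u m_v}\int_\Omega z\le\tfrac{\rho}{e}e^{\chi_u m_v}m_z$ by \eqref{2.6}. The genuinely linear leftovers are harmless because $e^{\chi_u v}a=u$ and $e^{2\chi_u v}a^2=u^2$, so $\mu_u\int_\Omega e^{\chi_u v}a=\mu_u\int_\Omega u\le\mu_u m_u$ by \eqref{2.3} and $-\mu_u\int_\Omega e^{2\chi_u v}a^2=-\mu_u\int_\Omega u^2\le0$. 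Distributing the reservoir $\tfrac{\mu_u}{2}\int_\Omega e^{2\chi_u v}a^2\log a$ across the three absorption steps and collecting all constants into $C(\varepsilon)$ then gives \eqref{3.1} for every $t>0$; no largeness of $t$ is needed, since $m_u,m_v,m_z$ are uniform in time.
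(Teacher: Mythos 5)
Your proposal is correct and follows essentially the same route as the paper: the cancellation of the $\log a$-weighted haptotactic terms that you make explicit is exactly what the paper encodes by combining the identity $(e^{\chi_u v}a)_t=D_u\nabla\cdot(e^{\chi_u v}\nabla a)+\mu_u e^{\chi_u v}a(1-e^{\chi_u v}a)-\rho e^{\chi_u v}az$ with $\frac{d}{dt}\int_\Omega e^{\chi_u v}a\log a=\int_\Omega(e^{\chi_u v}a)_t\log a+\int_\Omega e^{\chi_u v}a_t$, and your subsequent steps (using $0\le v\le m_v$, Young's inequality to split the $ab$-term into $\varepsilon\int_\Omega b^2+C(\varepsilon)\int_\Omega a^2$, and absorbing $\int_\Omega a^2$ and the $a\log a$-terms into the dissipation $\int_\Omega e^{2\chi_u v}a^2\log a$ via elementary pointwise inequalities) coincide with the paper's. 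One cosmetic slip: the discarded term $-\rho\int_\Omega e^{\chi_u v}az$ is nonpositive, not nonnegative, which is of course precisely why it may be dropped from the upper bound.
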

\begin{proof}
From the first equation in \eqref{1.3}, it follows that
\begin{equation}\label{3.2}
(e^{\chi_u v}a)_t=D_u\nabla \cdot (e^{\chi_u v}\nabla a)+\mu_u e^{\chi_u v}a(1-e^{\chi_u v}a)-\rho e^{\chi_u v}az.	
\end{equation}
Hence, a testing procedure on the first equation in \eqref{2.2} leads to
\begin{equation}\label{3.3}
\begin{array}{rl}
\displaystyle\frac{d}{dt}\int_{\Omega}e^{\chi_u v}a\log a	&
 =\displaystyle\int_{\Omega}(e^{\chi_u v}a)_t\log a+\int_{\Omega}e^{\chi_u v}a_t\nonumber\\
&\leq \displaystyle-D_u\int_{\Omega}e^{\chi_u v}\frac{|\nabla a|^2}{a}+\mu_u\int_{\Omega}e^{\chi_u v}a\log a-\mu_u\int_{\Omega}e^{2\chi_u v}a^2\log a
\nonumber\\
&\displaystyle-\rho\int_{\Omega}e^{\chi_u v}za\log a
+\mu_u\int_{\Omega}ae^{\chi_u v}-\rho\int_{\Omega}e^{\chi_u v}za\nonumber\\
&\displaystyle+\chi_u \alpha_u\int_{\Omega}e^{2\chi_u v}v a^2
+\chi_u \alpha_w\int_{\Omega}e^{(\chi_w+\chi_u)v}vab.
\end{array}
\end{equation}
Thanks to Lemma \ref{Lemma22} and the elementary inequality $a\log a\geq -\frac{1}{e}$ valid for all $a>0$, one can  find $c_1>0$ such that
\begin{equation*}
\begin{array}{rl}
&\displaystyle\frac{d}{dt}\int_{\Omega}e^{\chi_u v}a\log a+D_u\int_{\Omega}e^{\chi_u v}\frac{|\nabla a|^2}{a} +\mu_u\int_{\Omega}e^{2\chi_u v}a^2\log a\\[2mm]
\leq& \displaystyle\alpha_u\chi_ue^{2\chi_u m_v}m_v\int_{\Omega} a^2+\alpha_w\chi_ue^{(\chi_u+\chi_w)m_v}m_v\int_{\Omega}ab +\mu_u\int_{\Omega}e^{\chi_u v}a\log a +c_1,
\end{array}
\end{equation*}
which along with Young's inequality implies that for  any $\varepsilon>0$
\begin{equation*}
\begin{array}{rl}
&\displaystyle\frac{d}{dt}\int_{\Omega}e^{\chi_u v}a\log a+D_u\int_{\Omega}e^{\chi_u v}\frac{|\nabla a|^2}{a} +\mu_u\int_{\Omega}e^{2\chi_u v}a^2\log a\\[2mm]
\leq& (\displaystyle\alpha_u\chi_ue^{2\chi_u m_v}m_v+
\frac1{\varepsilon}
\alpha_w^2\chi_u^2e^{2(\chi_u+\chi_w)m_v}m_v^2 )
\int_{\Omega} a^2+
\varepsilon \int_{\Omega}b^2+ \mu_u\int_{\Omega}e^{\chi_u v}a\log a+ c_1.
\end{array}
\end{equation*}
Further invoking  the inequality $a^2\leq \varepsilon_1 a^2\log a+e^{\frac{2}{\varepsilon_1}}$ for any $\varepsilon_1>0$, we arrive at
\begin{equation}\label{3.3}
\begin{array}{rl}
&\displaystyle\frac{d}{dt}\int_{\Omega}e^{\chi_u v}a\log a+D_u\int_{\Omega}e^{\chi_u v}\frac{|\nabla a|^2}{a}
+\frac{3\mu_u}{4}\int_{\Omega}e^{2\chi_u v}a^2\log a\\
\leq &
\varepsilon \displaystyle\int_{\Omega}b^2+\mu_u\int_{\Omega}e^{\chi_u v}a\log a+ c_2(\varepsilon)	
\end{array}
\end{equation}
with some $c_2(\varepsilon)>0$.
Accordingly, \eqref{3.1} is a consequence of \eqref{3.3} and the fact that $ a\log a\leq \varepsilon_2 a^2\log a- \varepsilon_2^{-1} \ln \varepsilon_2 $
with $\varepsilon_2=\frac{\mu_u}{4(\mu_u+1)}$.
\end{proof}

For the solution component $ b $ of \eqref{2.2}, we also have
\begin{lemma}\label{Lemma32}
Let $0<\beta<1$. Then one can find  $C>0$ and  $t_0>0$  such that for all $t>t_0$
\begin{align}\label{3.4}
&\displaystyle\frac{d}{dt}\int_\Omega e^{\chi_w v}b\log b+ \int_\Omega e^{\chi_w v}b\log b+
\frac {D_w}2 \displaystyle\int_\Omega e^{\chi_w v}\displaystyle\frac{|\nabla b|^2}{b}\nonumber\\
\leq & \int_{\Omega}a^2 +  C(\int_{\Omega} z^4)^{\frac 12}.
  \end{align}
\end{lemma}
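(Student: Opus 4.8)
The plan is to mirror the proof of Lemma \ref{Lemma31}, replacing the logistic structure of the $a$-equation by the $L^1$-decay of $b$ supplied by Lemma \ref{Lemma23}. Since $w=e^{\chi_w v}b$ and $u=e^{\chi_u v}a$ with $\chi_w=\xi_w/D_w$, the $w$-equation in \eqref{1.3} rewrites as $(e^{\chi_w v}b)_t=D_w\nabla\cdot(e^{\chi_w v}\nabla b)-e^{\chi_w v}b+\rho e^{\chi_u v}az$, the exact counterpart of \eqref{3.2}. Using the pointwise identity $\partial_t(e^{\chi_w v}b\log b)=(e^{\chi_w v}b)_t\log b+e^{\chi_w v}b_t$, I would integrate over $\Omega$ and insert the evolution equation for $e^{\chi_w v}b$ into the first integral and the $b$-equation from \eqref{2.2} into the second; after integrating by parts in the diffusion terms (whose boundary contributions vanish by $\partial_\nu b=0$), this produces
\begin{align*}
&\frac{d}{dt}\int_\Omega e^{\chi_w v}b\log b+\int_\Omega e^{\chi_w v}b\log b+D_w\int_\Omega e^{\chi_w v}\frac{|\nabla b|^2}{b}\\
&=\rho\int_\Omega e^{\chi_u v}az\log b+\rho\int_\Omega e^{\chi_u v}az-\int_\Omega e^{\chi_w v}b+\chi_w\alpha_u\int_\Omega e^{(\chi_u+\chi_w)v}abv+\chi_w\alpha_w\int_\Omega e^{2\chi_w v}b^2v.
\end{align*}
The whole task is then to absorb the five terms on the right into the dissipative quantities on the left plus the admissible remainder $\int_\Omega a^2+C(\int_\Omega z^4)^{1/2}$.

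Next I would dispose of the genuinely superlinear contributions, which form the main obstacle. Using $0\le v\le m_v$ (Lemma \ref{Lemma22}) and $e^{\chi_w v}\ge 1$, the last term is bounded by $C\int_\Omega b^2$, and after $ab\le\frac12a^2+\frac12b^2$ the $\alpha_u$-term contributes $\le C\int_\Omega a^2+C\int_\Omega b^2$, where the first piece is sent to the right-hand side. The remaining $\int_\Omega b^2$ is the crux: unlike the $a$-equation there is no quadratic degradation to absorb it, and the only substitute is the smallness of $\int_\Omega b$. Writing $\int_\Omega b^2=\|\sqrt b\|_{L^4(\Omega)}^4$ and invoking the two-dimensional Gagliardo--Nirenberg inequality gives $\int_\Omega b^2\le C_{GN}\|\nabla\sqrt b\|_{L^2(\Omega)}^2\int_\Omega b+C_{GN}(\int_\Omega b)^2$, and since $\|\nabla\sqrt b\|_{L^2(\Omega)}^2=\frac14\int_\Omega\frac{|\nabla b|^2}{b}\le\frac14\int_\Omega e^{\chi_w v}\frac{|\nabla b|^2}{b}$, the factor $\int_\Omega b\le\int_\Omega w\le Ce^{-\delta t}$ from Lemma \ref{Lemma23} can be made as small as we please once $t>t_0$. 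This is precisely what forces the threshold $t_0$: for $t>t_0$ the gradient part of the Gagliardo--Nirenberg bound is absorbed into half of $D_w\int_\Omega e^{\chi_w v}\frac{|\nabla b|^2}{b}$, leaving the advertised $\frac{D_w}{2}$, while the lower-order part $C_{GN}(\int_\Omega b)^2\le Ce^{-2\delta t}$ is in turn absorbed by the available dissipation $-\int_\Omega e^{\chi_w v}b$, because $(\int_\Omega b)^2\le\int_\Omega b\le\int_\Omega e^{\chi_w v}b$ as soon as $\int_\Omega b\le1$. This last device simultaneously explains why, in contrast with \eqref{3.1}, no additive constant survives on the right.

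Finally I would treat the two coupling terms carrying $z$. The term without logarithm is harmless: $e^{\chi_u v}\le e^{\chi_u m_v}$ together with $az\le\frac12a^2+\frac12z^2$ and $\int_\Omega z^2\le|\Omega|^{1/2}(\int_\Omega z^4)^{1/2}$ yields exactly a contribution of the form $C\int_\Omega a^2+C(\int_\Omega z^4)^{1/2}$. For $\rho\int_\Omega e^{\chi_u v}az\log b$ I would split $\Omega$ according to the sign of $\log b$: on $\{b<1\}$ the integrand is nonpositive and is simply discarded, while on $\{b\ge1\}$ I bound $az\log b\le\frac12a^2+\frac12z^2(\log b)^2$ and then use H\"older, $\int_{\{b\ge1\}}z^2(\log b)^2\le(\int_\Omega z^4)^{1/2}(\int_{\{b\ge1\}}(\log b)^4)^{1/2}$, together with the elementary bound $(\log s)^4\le Cs$ for $s\ge1$, so that $\int_{\{b\ge1\}}(\log b)^4\le C\int_\Omega b\le Cm_w$ enters only as a fixed constant multiplying $(\int_\Omega z^4)^{1/2}$. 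Collecting these estimates and choosing $t_0$ large enough for every Gagliardo--Nirenberg absorption to go through then yields \eqref{3.4}.
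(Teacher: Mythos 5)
Your proof is correct and follows essentially the same route as the paper's: the same differential identity for $\int_\Omega e^{\chi_w v}b\log b$, the same splitting of $\rho\int_\Omega e^{\chi_u v}az\log b$ according to the sign of $\log b$ followed by H\"older's inequality and $(\log s)^4\lesssim s$ on $\{b\ge 1\}$, and the same two-dimensional Gagliardo--Nirenberg absorption of $\int_\Omega b^2$ into the dissipation, with $t_0$ fixed via the $L^1$-decay of $w$ from Lemma \ref{Lemma23}. The one place you genuinely deviate is in keeping the negative term $-\int_\Omega e^{\chi_w v}b$ and using it to absorb the lower-order Gagliardo--Nirenberg remainder $C(\int_\Omega b)^2$ once $\int_\Omega b$ is small; the paper instead drops this negative term and is left with $c_1C_g(\int_\Omega w)^2$ on the right, which it dismisses only loosely (``Lemma \ref{Lemma22} and the Young inequality''), so your variant actually reproduces the stated inequality \eqref{3.4} --- which carries no additive constant --- more faithfully than the paper's own write-up. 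Two small bookkeeping points: the absorption threshold should be $\int_\Omega b\le 1/(c_1C_{GN})$ rather than $\le 1$ (a harmless enlargement of $t_0$), and to land the coefficient exactly $1$ on $\int_\Omega a^2$ you should use weighted Young inequalities (e.g.\ $Cab\le \varepsilon a^2+\tfrac{C^2}{4\varepsilon}b^2$ with the three $\varepsilon$'s summing to at most $1$) instead of the unweighted $\tfrac12$--$\tfrac12$ splittings you wrote; the paper makes the same silent adjustment.
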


\begin{proof}
\rm From the second equation in  \eqref{1.3}, it follows that
\begin{equation*}
(be ^{\chi_w v})_t=D_w\nabla\cdot(e^{\chi_w v}\nabla b)-  w+\rho uz.
\end{equation*}
Relying on $0\leq v\leq m_v$  in $\overline{\Omega}\times (0,\infty)$, a straightforward calculation along with the Young inequality yields
\begin{align}\label{3.5}
&\displaystyle\frac{d}{dt}\int_\Omega e^{\chi_w v}b\log b+ \int_\Omega e^{\chi_w v}b\log b+
D_w\displaystyle\int_\Omega e^{\chi_w v}\displaystyle\frac{|\nabla b|^2}{b}
\nonumber \\
=&\int_\Omega \log (eb) (\rho uz- w) +\chi_w \displaystyle \int_\Omega w(\alpha_u u
 + \alpha_w w )v+ \int_\Omega w\log b
\\[1mm]
 \leq & \alpha_w\chi_w e^{2\chi_w m_v}m_v\int_{\Omega}b^2+ \rho \int_{\Omega}aze^{\chi_u v}\log  b
 + \rho e^{\chi_u m_v}\int_{\Omega}az
 +\alpha_u\chi_w e^{(\chi_u+\chi_w)m_v}m_v\int_{\Omega}ab
  \nonumber\\
  \leq & c_1\int_{\Omega}b^2+\int_{\Omega}a^2 +  c_1\int_{\Omega}z^2+ c_1\displaystyle\int_{\{x\in \Omega;b(x,t)\geq 1\}}z^2 \log ^2 b
  \nonumber\\
  \leq & c_1\int_{\Omega}b^2+\int_{\Omega}a^2 +  c_1\int_{\Omega}z^2+ c_1\displaystyle(\int_{\Omega} z^4)^{\frac 12}
  (\int_{\{x\in \Omega;b(x,t)\geq 1\}}|\log  b|^4) ^{\frac 12} \nonumber\\
 \leq & c_1\int_{\Omega}b^2+\int_{\Omega}a^2 +  c_1\int_{\Omega}z^2+ c_1\displaystyle(\int_{\Omega} z^4)^{\frac 12}
  (\int_{\Omega} b+c_2|\Omega|)^{\frac 12}\nonumber
  \end{align}
with some $c_1>0$, where we use the fact that  there exits $c_2>0$ such that $\log^{4} s\leq s+c_2 $ for all $s\geq 1$.

Furthermore, in order to appropriately   estimate the first summand  on the right-hand of \eqref{3.5}, we apply the two-dimensional Gagliardo--Nirenberg inequalities
\begin{equation*}
\|\varphi\|^4_{L^4(\Omega)}
\leq C_{g}\|\nabla \varphi\|_{L^2(\Omega)}^2\|\varphi\|^2_{L^2(\Omega)}+C_g\|\varphi\|^4_{L^2(\Omega)} ~~~~\hbox{for some}~ C_g>0 ~\hbox{and all}~ \varphi\in W^{1,2}(\Omega)
\end{equation*}
  to get
  \begin{equation}
\begin{array}{rl}\label{3.6}
 c_1\displaystyle \int_{\Omega}b^2=
 & c_1\|\sqrt{b}\|^4_{L^4(\Omega)}\\
 \leq &
 \displaystyle c_1C_g \displaystyle\int_\Omega b
 \displaystyle\int_\Omega e^{\chi_w v}\displaystyle\frac{|\nabla b|^2}{b}+ c_1C_g (\displaystyle\int_\Omega b)^2\\
 \leq &
 \displaystyle c_1C_g \displaystyle\int_\Omega w
 \displaystyle\int_\Omega e^{\chi_w v}\displaystyle\frac{|\nabla b|^2}{b}+ c_1C_g (\displaystyle\int_\Omega w)^2\\
   \end{array}
\end{equation}
Therefore combining  \eqref{3.6} with \eqref{3.5}, we arrive at
\begin{align}\label{3.7}
&\displaystyle\frac{d}{dt}\int_\Omega e^{\chi_w v}b\log b+ \int_\Omega e^{\chi_w v}b\log b+
D_w\displaystyle\int_\Omega e^{\chi_w v}\displaystyle\frac{|\nabla b|^2}{b}
 \\
\leq & \displaystyle c_1C_g \displaystyle\int_\Omega w
 \displaystyle\int_\Omega e^{\chi_w v}\displaystyle\frac{|\nabla b|^2}{b}+ c_1C_g (\displaystyle\int_\Omega w)^2+\int_{\Omega}a^2 +  c_1\int_{\Omega}z^2+ c_1\displaystyle(\int_{\Omega} z^4)^{\frac 12}
  (\int_{\Omega} w+c_2|\Omega|)^{\frac 12}.\nonumber
  \end{align}
By Lemma \ref{Lemma23}, we can pick $t_0>0$ suitably large such that
$$
 c_1C_g \displaystyle\int_\Omega w\leq \frac {D_w}2
$$
and thereby  for $t\geq t_0$,
\begin{align*}\label{3.7}
&\displaystyle\frac{d}{dt}\int_\Omega e^{\chi_w v}b\log b+ \int_\Omega e^{\chi_w v}b\log b+
\frac {D_w}2 \displaystyle\int_\Omega e^{\chi_w v}\displaystyle\frac{|\nabla b|^2}{b}
\nonumber \\
\leq &  c_1C_g (\displaystyle\int_\Omega w)^2+\int_{\Omega}a^2 +  c_1\int_{\Omega}z^2+ c_1\displaystyle(\int_{\Omega} z^4)^{\frac 12}
  (\int_{\Omega} w+c_2|\Omega|)^{\frac 12},
  \end{align*}
which along with Lemma \ref{Lemma22} and the Young inequality completes the proof. \end{proof}

Whereas the expressions  $\int_{\Omega}a^2$ appearing in  \eqref{3.4}   turns out to be conveniently digestible through the dissipation rate
 in \eqref{3.1}, it remains to  estimate  $(\int_{\Omega} z^4)^{\frac 12}$ by means of an interpolation argument.
\begin{lemma}\label{Lemma33}
Let $0<\beta<1$ and define
$$
\mathcal{F}(t):=\int_{\Omega}e^{\chi_u v}a(\cdot,t)\log a(\cdot,t)+\int_{\Omega}e^{\chi_w v}b(\cdot,t)\log b(\cdot,t)+\int_{\Omega}z^2(\cdot,t).
$$
Then  there exists  constant $C>0$ such that
\begin{equation}\label{3.8}
\mathcal{F'}(t)	+\mathcal{F}(t)\leq   C
\end{equation}
for all $t\geq t_0$ as given in Lemma \ref{Lemma32}.

\end{lemma}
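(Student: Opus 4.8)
The plan is to convert the two entropy estimates already established (Lemmas \ref{Lemma31} and \ref{Lemma32}), together with a basic $L^2$-testing of the $z$-equation, into a single absorptive inequality of the shape $\mathcal{F}'+\mathcal{F}\le(\text{dissipation})+C$, and then to digest every remaining coupling term either against the dissipation integrals produced on the left-hand side or against the exponential $L^1$-decay of $w$ and $z$ supplied by Lemma \ref{Lemma23}.

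First I would supply the third ingredient: testing the $z$-equation in \eqref{2.2} by $z$ and discarding the nonnegative term $2\rho\int_\Omega uz^2$ (recall $u>0$, $z\ge0$) yields
\begin{equation*}
\frac{d}{dt}\int_\Omega z^2+2D_z\int_\Omega|\nabla z|^2+2\delta_z\int_\Omega z^2\le 2\beta\int_\Omega wz .
\end{equation*}
Adding this to \eqref{3.1} (with $\varepsilon$ fixed once and for all, say $\varepsilon=1$) and to \eqref{3.4}, and completing the zero-order part of the left side to $+\mathcal{F}$ (so that the $z$-contribution $-2\delta_z\int_\Omega z^2$ becomes $(1-2\delta_z)\int_\Omega z^2$ on the right), I arrive after rearrangement at
\begin{align*}
&\mathcal{F}'(t)+\mathcal{F}(t)+D_u\int_\Omega e^{\chi_u v}\frac{|\nabla a|^2}{a}+\frac{\mu_u}{2}\int_\Omega e^{2\chi_u v}a^2\log a\\
&\quad+\frac{D_w}{2}\int_\Omega e^{\chi_w v}\frac{|\nabla b|^2}{b}+2D_z\int_\Omega|\nabla z|^2\\
&\le \int_\Omega a^2+\int_\Omega b^2+C\Big(\int_\Omega z^4\Big)^{1/2}+(1-2\delta_z)\int_\Omega z^2+2\beta\int_\Omega wz+C .
\end{align*}
It therefore remains to absorb the coupling terms on the right into the four dissipation integrals on the left, up to an additive constant.

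The two local terms are immediate. Using the elementary inequality $a^2\le\varepsilon_1 a^2\log a+e^{2/\varepsilon_1}$ already exploited in Lemma \ref{Lemma31} together with $e^{2\chi_u v}\ge1$, the choice $\varepsilon_1=\mu_u/4$ gives $\int_\Omega a^2\le\frac{\mu_u}{4}\int_\Omega e^{2\chi_u v}a^2\log a+C$, which is swallowed by half of the corresponding dissipation integral (the leftover being bounded below since $s^2\log s\ge-\frac{1}{2e}$). For $\int_\Omega b^2$ I would reuse the Gagliardo--Nirenberg estimate \eqref{3.6},
\begin{equation*}
\int_\Omega b^2\le C_g\Big(\int_\Omega b\Big)\int_\Omega e^{\chi_w v}\frac{|\nabla b|^2}{b}+C_g\Big(\int_\Omega b\Big)^2,
\end{equation*}
and invoke $\int_\Omega b\le\int_\Omega w\le Ce^{-\delta t}$ from Lemma \ref{Lemma23}: enlarging $t_0$ so that $C_g\int_\Omega b$ is as small as desired, the first summand is absorbed by the $b$-dissipation and the second is an exponentially decaying, hence bounded, remainder.

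The genuine obstacle, and the place where the structure of the problem is really used, is the control of the $z$-dependent terms $C(\int_\Omega z^4)^{1/2}$, $(1-2\delta_z)\int_\Omega z^2$ and $2\beta\int_\Omega wz$, none of which is dominated by a strong enough zero-order damping of $z$. The key is to interpolate against the $L^1$-decay of $z$: the two-dimensional Gagliardo--Nirenberg inequalities give $\|z\|_{L^2}^2\le C\|\nabla z\|_{L^2}\|z\|_{L^1}+C\|z\|_{L^1}^2$ and $\|z\|_{L^4}^2\le C\|\nabla z\|_{L^2}^{3/2}\|z\|_{L^1}^{1/2}+C\|z\|_{L^1}^2$, so that after Young's inequality each of $\int_\Omega z^2$ and $(\int_\Omega z^4)^{1/2}$ is bounded by $\eta\int_\Omega|\nabla z|^2+C\|z\|_{L^1}^2$ with $\eta$ at our disposal; since $\|z\|_{L^1}\le Ce^{-\delta t}$ by Lemma \ref{Lemma23}, the $\|z\|_{L^1}^2$ contributions are bounded, and choosing $\eta$ small lets the gradient parts be absorbed into $2D_z\int_\Omega|\nabla z|^2$. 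Finally $2\beta\int_\Omega wz\le\beta\int_\Omega z^2+\beta\int_\Omega w^2$ is handled by the same $z$-interpolation together with $\int_\Omega w^2\le e^{2\chi_w m_v}\int_\Omega b^2$, the latter absorbed exactly as $\int_\Omega b^2$ above. Collecting all estimates, every dissipation integral retains a nonnegative coefficient and all remainders are constants, so that $\mathcal{F}'(t)+\mathcal{F}(t)\le C$ holds once $t_0$ is enlarged to make the finitely many smallness conditions hold simultaneously. The main bookkeeping difficulty is precisely to verify that the fractions of the $b$- and $z$-dissipation spent on the various terms sum to at most the available $\frac{D_w}{2}$ and $2D_z$, respectively.
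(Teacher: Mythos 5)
Your proposal is correct and follows essentially the same route as the paper: it combines Lemma \ref{Lemma31} and Lemma \ref{Lemma32} with an $L^2$-testing of the $z$-equation, absorbs $\int_\Omega a^2$ into the $\int_\Omega e^{2\chi_u v}a^2\log a$ dissipation, absorbs $\int_\Omega b^2$ via the two-dimensional Gagliardo--Nirenberg inequality against the $L^1$-norm of $b$, and controls $\bigl(\int_\Omega z^4\bigr)^{1/2}$ and $\int_\Omega z^2$ by interpolation against $\|z\|_{L^1(\Omega)}$ followed by Young's inequality so as to absorb them into $\int_\Omega|\nabla z|^2$, exactly as in \eqref{3.10}--\eqref{3.13}. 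The only cosmetic difference is that by fixing $\varepsilon=1$ in \eqref{3.1} and invoking the exponential decay of $\int_\Omega b$ you must enlarge $t_0$, whereas the paper keeps $\varepsilon$ free (choosing $\varepsilon=\frac{D_w}{4m_w}$) and uses only the uniform bounds $m_w$, $m_z$ from Lemma \ref{Lemma22}, so its constants work for the $t_0$ of Lemma \ref{Lemma32} directly; your variant is harmless, since $\mathcal{F}'+\mathcal{F}$ is continuous and hence bounded on the remaining compact time interval.
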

\begin{proof}
Testing the fourth equation in \eqref{2.2} by $z$, we get
\begin{equation*}
\begin{array}{rl}
\displaystyle\frac{d}{dt}\int_{\Omega}z^2+\int_{\Omega}z^2+2D_z\int_{\Omega}|\nabla z|^2&\leq
\displaystyle 2\beta\int_{\Omega}e^{\chi_w v}bz +\int_{\Omega}z^2\\[2mm]
&\leq\displaystyle  \varepsilon\int_{\Omega}b^2+(1+\frac {e^{2\chi_w m_v}\beta^2}{\varepsilon})\int_{\Omega}z^2.	
\end{array}
\end{equation*}
Hence  Lemma \ref{Lemma31} and Lemma \ref{Lemma32} provide positive constants $c_i(\varepsilon)>0 (i=1,2)$ such that
\begin{equation}\label{3.9}
\begin{array}{rl}
&\mathcal{F'}(t)	+\mathcal{F}(t)
+\displaystyle D_w\int_{\Omega}e^{\chi_w v}\frac{|\nabla b|^2}{b}+D_z\int_{\Omega}|\nabla z|^2+\frac{\mu_u}{2}\int_{\Omega}e^{2\chi_u v}a^2\log a+
\int_{\Omega}z^2\\
\leq & \displaystyle\int_{\Omega}a^2 +  c_1(\varepsilon)(\int_{\Omega} z^4)^{\frac 12}+  2\varepsilon\int_{\Omega}b^2+c_2(\varepsilon)
\end{array}
\end{equation}
for all $t\geq t_0$ with $t_0$ given by Lemma \ref{Lemma32}.

Now again since $a^2\leq \varepsilon_1 a^2\log a+e^{\frac{2}{\varepsilon_1}}$ for any $\varepsilon_1>0$,
\begin{equation}\label{3.10}
\displaystyle\int_{\Omega}a^2 \leq \frac{\mu_u}{4}\int_{\Omega}e^{2\chi_u v}a^2\log a+c_3
\end{equation}
with $c_3>0$,
whereas according to the two-dimensional Gagliardo--Nirenberg inequalities,
 \begin{equation}
\begin{array}{rl}\label{3.11}
  2\varepsilon\displaystyle \int_{\Omega}b^2=
 &  2\varepsilon\|\sqrt{b}\|^4_{L^4(\Omega)}\\
 \leq &
 \displaystyle  2\varepsilon C_g \displaystyle\int_\Omega b
 \displaystyle\int_\Omega e^{\chi_w v}\displaystyle\frac{|\nabla b|^2}{b}+  2\varepsilon C_g (\displaystyle\int_\Omega b)^2\\
 \leq &
 \displaystyle   2\varepsilon m_w
 \displaystyle\int_\Omega e^{\chi_w v}\displaystyle\frac{|\nabla b|^2}{b}+ 2\varepsilon m_w^2 C_g \\
 \leq &
 \displaystyle \frac{D_w}2\int_{\Omega}e^{\chi_w v}\frac{|\nabla b|^2}{b}+c_4
    \end{array}
\end{equation}
by the choice of $\varepsilon=\frac{D_w}{4m_w} $.

In summary, \eqref{3.11}, \eqref{3.10} and \eqref{3.9} show that for $t\geq t_0$
\begin{equation}\label{3.12}
\mathcal{F'}(t)	+\mathcal{F}(t)
+D_z\int_{\Omega}|\nabla z|^2+\int_{\Omega}z^2\leq   c_1(\int_{\Omega} z^4)^{\frac 12}+  c_5
\end{equation}
with some $c_5>0$.

 Furthermore to estimate $\left(\int_{\Omega}z^4\right)^{\frac{1}{2}}$ on the right-hand of  \eqref{3.12}, we employ the Gagliardo--Nirenberg inequalities once more to obtain that
\begin{equation}\label{3.13}
\begin{array}{rl}
c_1\|z(\cdot,t)\|^2_{L^4(\Omega)}&
\leq c_1 c_g
\|\nabla z(\cdot,t)\|^{\frac 32}_{L^2(\Omega)}\|z(\cdot,t)\|^{\frac 12}_{L^1(\Omega)} +c_1 c_g\|z(\cdot,t)\|^2_{L^1(\Omega)}\\
&\leq \displaystyle \frac{D_z}2\int_{\Omega}|\nabla z|^2+c_6
\end{array}
\end{equation}
with $c_6>0$,~which together with  \eqref{3.12} readily establishes  \eqref{3.8}.
\end{proof}
As a consequence of \eqref{3.8},  the $LlogL$-estimate of quantities $a$
and $b$  is  achieved  as follows
\begin{lemma}\label{Lemma34}
Let $0<\beta<1$. Then there exists $C> 0$ such that for all $t> t_0$,
\begin{equation}\label{3.14}
\int_{\Omega}a(\cdot,t)|\log a(\cdot,t)|\leq C	
\end{equation}
as well as
\begin{equation}\label{3.15}
\int_{\Omega}b(\cdot,t)|\log b(\cdot,t)|\leq C.	
\end{equation}	
\end{lemma}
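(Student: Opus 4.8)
The plan is to read off the claim directly from the quasi-dissipative inequality \eqref{3.8} of Lemma \ref{Lemma33}, the only remaining work being to convert a \emph{signed} entropy bound into an \emph{unsigned} one. First I would apply an elementary ODE comparison to
\begin{equation*}
\mathcal{F}'(t)+\mathcal{F}(t)\leq C,\qquad t\geq t_0,
\end{equation*}
which yields $\mathcal{F}(t)\leq\max\{\mathcal{F}(t_0),C\}$ for all $t\geq t_0$ (the value $\mathcal{F}(t_0)$ being finite by the smoothness from Lemma \ref{lemma2.1}); hence $\mathcal{F}$ is bounded above on $(t_0,\infty)$. The whole task then reduces to turning this single upper bound on the sum $\int_{\Omega}e^{\chi_u v}a\log a+\int_{\Omega}e^{\chi_w v}b\log b+\int_{\Omega}z^2$ into separate upper bounds for the two unsigned quantities $\int_\Omega a|\log a|$ and $\int_\Omega b|\log b|$.

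The key elementary input is the pointwise bound $s\log s\geq-\tfrac1e$ valid for all $s>0$, combined with $0\leq v\leq m_v$ from \eqref{2.4}. These give the lower bounds $\int_\Omega e^{\chi_u v}a\log a\geq-\tfrac{e^{\chi_u m_v}}{e}|\Omega|$ and $\int_\Omega e^{\chi_w v}b\log b\geq-\tfrac{e^{\chi_w m_v}}{e}|\Omega|$, while trivially $\int_\Omega z^2\geq0$. Since each of the three summands of $\mathcal{F}$ is bounded below, the upper bound on their sum forces each summand to be bounded above as well; in particular I would record that both $\int_\Omega e^{\chi_u v}a\log a\leq C$ and $\int_\Omega e^{\chi_w v}b\log b\leq C$ hold for $t\geq t_0$.

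The remaining, and only slightly delicate, step is the passage from the signed weighted entropy to $a|\log a|$, which I would carry out by splitting $\Omega$ into $\{a\geq1\}$ and $\{a<1\}$. On $\{a<1\}$ one has $a|\log a|=-a\log a\leq\tfrac1e$, so this portion contributes at most $\tfrac{|\Omega|}{e}$. On $\{a\geq1\}$ one has $a|\log a|=a\log a\geq0$, and since $e^{\chi_u v}\geq1$ (because $v\geq0$),
\begin{equation*}
\int_{\{a\geq1\}}a|\log a|\leq\int_{\{a\geq1\}}e^{\chi_u v}a\log a=\int_{\Omega}e^{\chi_u v}a\log a-\int_{\{a<1\}}e^{\chi_u v}a\log a\leq C+\tfrac{e^{\chi_u m_v}}{e}|\Omega|,
\end{equation*}
where the last step again uses $s\log s\geq-\tfrac1e$ and $v\leq m_v$ on $\{a<1\}$. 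Adding the two pieces proves \eqref{3.14}, and the identical argument with $(\chi_u,a)$ replaced by $(\chi_w,b)$ gives \eqref{3.15}. The main obstacle here is not analytic depth but precisely this sign bookkeeping: the functional $\mathcal{F}$ controls $a\log a$, which is negative where $0<a<1$, so one must separately absorb the negative part via the universal lower bound $-\tfrac1e$ while exploiting the weight $e^{\chi_u v}\geq1$ to dominate the nonnegative part on $\{a\geq1\}$.
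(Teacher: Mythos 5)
Your proof is correct and takes essentially the same route as the paper: both deduce $\mathcal{F}(t)\leq c_1$ for $t\geq t_0$ from Lemma \ref{Lemma33} by ODE comparison, and then convert the signed weighted entropies into the unsigned quantities via the pointwise bound $s\log s\geq -\tfrac1e$, the weight estimate $1\leq e^{\chi_u v}\leq e^{\chi_u m_v}$, and the splitting of $\Omega$ according to the sign of $\log a$ (resp.\ $\log b$). The only cosmetic difference is bookkeeping: you bound each summand of $\mathcal{F}$ separately (using the lower bounds on the others) before converting, whereas the paper handles the two entropies jointly and absorbs the negative parts through the single estimate $\int_{\Omega}a|\log a|\leq\int_{\Omega}e^{\chi_u v}a\log a+\tfrac{2|\Omega|e^{\chi_u m_v}}{e}$.
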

\begin{proof}  As in Lemma 3.4 of \cite{LiWang}, by the  inequality $a\log a>-e^{-1}$ in $\Omega\times (0,\infty)$, we have
\begin{equation*}
\begin{array}{rl}
\displaystyle\int_{\Omega}a(\cdot,t)|\log a(\cdot,t)|&\leq \displaystyle\int_{\Omega}e^{\chi_u v}a(\cdot,t)\log a(\cdot,t)-2\int_{a<1}e^{\chi_{u} v}a(\cdot,t)\log a(\cdot,t)	\\[2mm]
&\leq \displaystyle\int_{\Omega}e^{\chi_u v}a(\cdot,t)\log a(\cdot,t)+\frac{2|\Omega|e^{\chi_u m_v}}{e}.
\end{array}
\end{equation*}
Likewise,~we can also obtain
\begin{equation*}
\int_{\Omega}b(\cdot,t)|\log b(\cdot,t)|\leq \int_{\Omega}e^{\chi_w v}b(\cdot,t)\log b(\cdot,t)+\frac{2|\Omega|e^{\chi_w m_v}}{e}.	
\end{equation*}
According to \eqref{3.8},  there exits $c_1>0$ such that
\begin{equation}\label{3.16}
\mathcal{F}(t)\leq c_1.
\end{equation}
Therefore by the definition of   $\mathcal{F}(t)$,
we can see that
\begin{eqnarray*}
\int_{\Omega}a(\cdot,t)|\log a(\cdot,t)|+\int_{\Omega}b(\cdot,t)|\log b(\cdot,t)|\leq \mathcal{F}(t)+
\frac{2|\Omega|e^{\chi_u m_v}}{e}+\frac{2|\Omega|e^{\chi_w m_v}}{e}
\leq c_2
\end{eqnarray*}
with $c_2=
\frac{2|\Omega|}
e(e^{\chi_w m_v}
+e^{\chi_u m_v} )+ c_1$ for all $t>t_0$, and thus complete the proof.
\end{proof}

The a priori estimates for $a, b$  gained in Lemma \ref{Lemma34} is the cornerstone to
 establish a $L^{\infty}(\Omega)$-bound for solution $(u,v,w,z)$.  Indeed,  one can proceed to derive obtain $L^{\infty}(\Omega)$-bound
 by means of some quite straightforward  $L^p$ testing procedures.
\begin{lemma}\label{Lemma35}
Let $0<\beta<1$. Then there exists $C> 0$ such that  for all $t>0$
\begin{equation}\label{3.17}
\|a(\cdot,t)\|_{L^{\infty}(\Omega)}+\|b(\cdot,t)\|_{L^{\infty}(\Omega)}+\|z(\cdot,t)\|_{L^{\infty}(\Omega)}
\leq C.
\end{equation}
\end{lemma}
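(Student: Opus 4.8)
The plan is to run a Moser-type iteration on the three diffusive components $a$, $b$ and $z$ simultaneously, using the $LlogL$-bounds of Lemma \ref{Lemma34} (and, behind them, the quasi-dissipative structure of $\mathcal{F}$ in Lemma \ref{Lemma33}) as the base integrability from which to bootstrap. For $p\ge 2$ I would test the first equation of \eqref{2.2} by $e^{\chi_u v}a^{p-1}$, the second by $e^{\chi_w v}b^{p-1}$ and the fourth by $z^{p-1}$. The weighted choice $e^{\chi_u v}a^{p-1}$ is essential: it turns the principal part into the clean dissipation $\int_\Omega e^{\chi_u v}|\nabla a^{p/2}|^2$ (and likewise $\int_\Omega e^{\chi_w v}|\nabla b^{p/2}|^2$), avoiding any appearance of the uncontrolled quantity $\Delta v$. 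A convenient bonus is that the contribution of $v_t$ produced by differentiating the weight, namely $\frac{\chi_u}{p}\int_\Omega v_t e^{\chi_u v}a^p$, is nonpositive because $v_t=-(\alpha_u u+\alpha_w w)v\le 0$, so it may simply be discarded; the quadratic degradation furnishes a further negative term $-\mu_u\int_\Omega e^{2\chi_u v}a^{p+1}$ which is likewise helpful and can be dropped.

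After these reductions each tested equation produces, on its right-hand side, superlinear terms of degree $p+1$: the self-production $\int_\Omega a^{p+1}$ and $\int_\Omega b^{p+1}$ arising from the haptotactic $v$-production created by the change of variables \eqref{2.1}, the source $\beta\int_\Omega z^{p-1}w$ in the $z$-equation, and mixed contributions such as $\int_\Omega e^{(\chi_u+\chi_w)v}a^p bv$ and $\int_\Omega e^{\chi_u v}b^{p-1}az$. I would split every mixed term by Young's inequality into the pure powers $\int_\Omega a^{p+1}$, $\int_\Omega b^{p+1}$, $\int_\Omega z^{p+1}$, so that after summing the three inequalities the entire right-hand side consists of terms of this single type, plus lower-order constants controlled by Lemma \ref{Lemma22}. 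The decisive step is then to absorb each $\int_\Omega\star^{p+1}$ into the matching dissipation $\int_\Omega|\nabla\star^{p/2}|^2$ by the two-dimensional Gagliardo--Nirenberg inequality: writing $\varphi=\star^{p/2}$ and interpolating $\|\varphi\|_{L^{2(p+1)/p}}$ between $\|\nabla\varphi\|_{L^2}$ and the $p/2$-level quantity $\|\varphi\|_{L^1}=\int_\Omega\star^{p/2}$, one obtains, for $p>2$,
\[
\int_\Omega \star^{p+1}\le \varepsilon\int_\Omega|\nabla \star^{p/2}|^2+C(\varepsilon,p)\Big(\int_\Omega \star^{p/2}\Big)^{\frac{2p}{p-2}},
\]
where $\varepsilon$ is chosen small relative to the dissipation coefficient of order $1/p$. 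Since each of $a$, $b$, $z$ carries its own diffusion, all three superlinear contributions are absorbed at once, leaving a recursive differential inequality of the form $\frac{d}{dt}M_p+M_p\le C(p)\,(1+M_{p/2})^{\kappa}$ for $M_p(t)=\int_\Omega(a^p+b^p+z^p)$, with $\kappa$ bounded in $p$.

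Finally I would iterate this relation along $p=p_k=2^k$: a standard bookkeeping of the geometric growth of the constants $C(p_k)$ shows that $\sup_{t>t_0}\|a\|_{L^{p_k}}$, $\sup_{t>t_0}\|b\|_{L^{p_k}}$ and $\sup_{t>t_0}\|z\|_{L^{p_k}}$ remain bounded after extracting $p_k$-th roots, and letting $k\to\infty$ yields the uniform bounds claimed in \eqref{3.17}; the initial interval $[0,t_0]$ is disposed of by the continuity of the classical solution from Lemma \ref{lemma2.1}. I expect the principal obstacle to lie precisely in the criticality of the production terms in dimension two: the exponent $p+1$ is exactly one power above $L^p$, so the Gagliardo--Nirenberg absorption is borderline and, in particular, cannot be initiated from an $L^1$ bound alone (the degenerate case $p=2$). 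This is exactly where the $LlogL$-estimates of Lemma \ref{Lemma34} are indispensable, as they supply the slightly-better-than-$L^1$ starting integrability---itself bought in Lemma \ref{Lemma33} from the quadratic degradation for $a$ and from the $L^1$-decay of $w$ for $b$---without which the doubling iteration could not be set in motion.
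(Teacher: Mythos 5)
Your proposal is correct, and its skeleton is the same as the paper's: weighted testing of \eqref{2.2} by $e^{\chi_u v}a^{p-1}$, $e^{\chi_w v}b^{p-1}$ and $z^{p-1}$, Young splitting of all mixed terms into the pure powers $\int_\Omega a^{p+1}$, $\int_\Omega b^{p+1}$, $\int_\Omega z^{p+1}$, absorption of these critical terms into the gradient dissipation via two-dimensional Gagliardo--Nirenberg interpolation, and a concluding Moser-type iteration. Where you genuinely deviate is in how the absorption is performed. The paper applies a logarithm-type Gagliardo--Nirenberg inequality (Lemma A.5 in \cite{TW-JDE}) at \emph{every} level $r\ge 2$: there the dissipation enters multiplied by $\varepsilon\int_\Omega a\,|\log a^{r/2}|$, which by Lemma \ref{Lemma34} is $\varepsilon$ times a bounded quantity, so each $L^r$ bound (including $r=2$) follows directly from the $L^1$ and $L\log L$ information with no recursion between levels; a recursion appears only in the final, cited, Moser step. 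You instead use the standard Gagliardo--Nirenberg inequality against $\|\star^{p/2}\|_{L^1}$, which works only for $p>2$ and yields the recursion $M_p\le C(p)(1+M_{p/2})^{\kappa_p}$ with $\kappa_p=\frac{2p}{p-2}$; since $\kappa_p/2\to 1$ sufficiently fast along $p_k=2^k$ and $C(p)$ grows at most polynomially (the GN constant at exponent $q=2(p+1)/p\to 2$ stays bounded, and the Young constant with $\varepsilon\sim 1/p$ is polynomial in $p$), your doubling bookkeeping does close, and it merges the $L^p$-bootstrap and the Moser iteration into a single scheme. The one point you leave implicit is the base case $p=2$, and it deserves to be made explicit: as you observe, the standard interpolation degenerates there (one gets $\int_\Omega a^3\le C\|\nabla a\|_{L^2}^2\int_\Omega a+C(\int_\Omega a)^3$, whose dissipation coefficient $C\int_\Omega a$ is bounded but not small), and merely invoking ``slightly better than $L^1$'' integrability does not by itself produce the absorption; the precise mechanism is again the paper's logarithmic GN inequality at $r=2$, in which the $L\log L$ bound of Lemma \ref{Lemma34} sits in front of the dissipation with a free small factor $\varepsilon$ (for $z$ the base $L^2$ bound is simpler, being already contained in the boundedness of $\mathcal{F}$ from Lemma \ref{Lemma33}). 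With that single step spelled out, your argument is a complete and valid variant of the paper's proof.
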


\begin{proof}
 Testing the first equation in \eqref{2.2} by $e^{\chi_u v}a^{r-1}$ with $r\geq 2$, we obtain $c_1(r)>0$
\begin{equation*}
\begin{array}{rl}
&\displaystyle\frac{d}{dt}\int_{\Omega}e^{\chi_u v}a^r+\frac{4(r-1)D_u}{r}\int_{\Omega}e^{\chi_u v}|\nabla a^{\frac{r}{2}}|^2+
r\mu_u\int_{\Omega}e^{2\chi_v}a^{r+1}\\
\leq &\displaystyle\mu_u r\int_{\Omega}e^{\chi_u v}a^r+r \chi_u \int_{\Omega}e^{\chi_u v}a^r(\alpha_u u +\alpha_w w)v\\
\leq &\displaystyle\mu_u r\int_{\Omega}e^{\chi_u v}a^r+c_1(r) \int_{\Omega}a^{r}(a +b),
\end{array}
\end{equation*}
which together with the Young inequality, leads to
\begin{equation}\label{3.18}
\begin{array}{rl}
&\displaystyle\frac{d}{dt}\int_{\Omega}e^{\chi_u v}a^r+2 D_u \int_{\Omega}|\nabla a^{\frac{r}{2}}|^2+
\int_{\Omega}e^{\chi_u v}a^r\\
\leq & \displaystyle c_2(r)\int_{\Omega}a^{r+1}+c_2(r)\int_{\Omega}b^{r+1}+c_2(r)
\end{array}
\end{equation}
with constant $c_2(r)>0$.
Likewise, there exist  $c_i(r)>0 (i=3,4)$ such that
\begin{equation}\label{3.19}
\begin{array}{rl}
&\displaystyle\frac{d}{dt}\int_{\Omega}e^{\chi_w v}b^r+2 D_w \int_{\Omega}|\nabla b^{\frac{r}{2}}|^2+
r\int_{\Omega}e^{\chi_v}b^r\\
\leq & \displaystyle c_3(r)\int_{\Omega}b^{r-1}az +c_3(r)\int_{\Omega}a^{r+1} +c_3(r)\int_{\Omega}b^{r+1}\\[2mm]
\leq & \displaystyle c_4(r)\int_{\Omega} a^{r+1}+c_4(r)\int_{\Omega}b^{r+1} +c_4(r)\int_{\Omega}z^{r+1}
\end{array}
\end{equation}
as well as
\begin{eqnarray}\label{3.20}
\frac{d}{dt}\int_{\Omega}z^{r}+2 D_z \int_{\Omega}|\nabla z^{\frac{r}{2}}|^2+\frac{r\delta_z}{2}\int_{\Omega}z^{r}\leq \frac{2r\beta^{r}}{\delta_z}\int_{\Omega}w^{r}.	
\end{eqnarray}
Collecting \eqref{3.18}--\eqref{3.20},  we then arrive at
\begin{equation}\label{3.21}
\begin{array}{rl}
& \displaystyle \frac{d}{dt}
\int_{\Omega}(e^{\chi_u v}a^r+e^{\chi_w v} b^r+ z^{r})
+c_5
\int_{\Omega}(e^{\chi_u v}a^r+e^{\chi_w v} b^r+ z^{r})
+c_5 \int_{\Omega}(|\nabla a^{\frac{r}{2}}|^2+|\nabla b^{\frac{r}{2}}|^2
+|\nabla z^{\frac{r}{2}}|^2) \\
\leq &
\displaystyle c_6(r)(\int_{\Omega}a^{r+1}+\int_{\Omega}b^{r+1}+\int_{\Omega}z^{r+1})+c_6(r)
\end{array}
\end{equation}
with $c_5>0$ and $c_6(r)>0$.

Now we invoke the logarithm-type Gagliardo--Nirenberg inequality (we refer to Lemma A.5 in \cite{TW-JDE} for details) to obtain that for
any $\varepsilon>0$,
\begin{equation}\label{3.22}
\int_{\Omega}a^{r+1}\leq \varepsilon\|\nabla a^{\frac{r}{2}}\|^2_{L^2(\Omega)}\cdot\int_{\Omega}a\cdot|\log |a^{\frac{r}{2}}||+C(\varepsilon,r)\left(\|a^{\frac{r}{2}}\|^{\frac{2(r+1)}{r}}_{L^{\frac{2}{r}}(\Omega)}+1\right),
\end{equation}
and
\begin{equation}\label{3.23}
\int_{\Omega}b^{r+1}\leq \varepsilon\|\nabla b^{\frac{r}{2}}\|^2_{L^2(\Omega)}\cdot\int_{\Omega}b\cdot|\log |b^{\frac{r}{2}}||+C(\varepsilon,r)\left(\|b^{\frac{r}{2}}\|^{\frac{2(r+1)}{r}}_{L^{\frac{2}{r}}(\Omega)}+1\right)	
\end{equation}
as well as
\begin{equation}\label{3.24}
\int_{\Omega}z^{r+1}\leq \varepsilon\|\nabla z^{\frac{r}{2}}\|^2_{L^2(\Omega)}\cdot\int_{\Omega}z\cdot|\log |z^{\frac{r}{2}}||+C(\varepsilon,r)\left(\|z^{\frac{r}{2}}\|^{\frac{2(r+1)}{r}}_{L^{\frac{2}{r}}(\Omega)}+1\right).
\end{equation}
Therefore combining \eqref{3.21}--\eqref{3.24} with  Lemma \ref{Lemma34}, one can find  $c_7(r)>0$ and $c_8(\varepsilon,r)>0$ such that
\begin{equation}\label{3.25}
\begin{array}{rl}
& \displaystyle \frac{d}{dt}
\int_{\Omega}(e^{\chi_u v}a^r+e^{\chi_w v} b^r+ z^{r})
+c_5
\int_{\Omega}(e^{\chi_u v}a^r+e^{\chi_w v} b^r+ z^{r})
+c_5 \int_{\Omega}(|\nabla a^{\frac{r}{2}}|^2+|\nabla b^{\frac{r}{2}}|^2
+|\nabla z^{\frac{r}{2}}|^2) \\[2mm]
\leq &
\displaystyle  c_7(r) \varepsilon \int_{\Omega}(|\nabla a^{\frac{r}{2}}|^2+|\nabla b^{\frac{r}{2}}|^2
+|\nabla z^{\frac{r}{2}}|^2)
+c_8(\varepsilon, r).
\end{array}
\end{equation}
Accordingly  upon the choice of $\varepsilon=\frac{c_5}{2c_7(r)}$,  \eqref{3.25} shows that
\begin{eqnarray*}
\displaystyle \frac{d}{dt}
\int_{\Omega}(e^{\chi_u v}a^r+e^{\chi_w v} b^r+ z^{r})
+c_5
\int_{\Omega}(e^{\chi_u v}a^r+e^{\chi_w v} b^r+ z^{r})\leq c_9(r)
\end{eqnarray*}
with some $c_9(r)>0$ for all $t>t_0$, and hence entails
\begin{equation*}
\int_{\Omega}a^r(\cdot,t)+\int_{\Omega}b^r(\cdot,t)+\int_{\Omega}z^r(\cdot,t)\leq c_{10}(r)	
\end{equation*}
with some $c_{10}(r)> 0$ by a standard ODE comparison argument.
At this position, one
can  derive a bound for $a,b,z$ with respect to the norm in $L^\infty(\Omega)$ by means of a Moser-type iteration argument in quite a standard manner. We omit the proof thereof, and would like refer to \cite{LiWang,TW-NA,TW-JDE} for details in a closely related setting.
\end{proof}

\section{Asymptotic behavior}
On the basis of the exponential decay of  quantities $w,z$ with respect to the norm in $L^1(\Omega)$ and global boundedness of solutions, we will address  the large time asymptotics of the solution $(u,v,w,z)$ to \eqref{1.3}.
To this end, we first turn the $L^1$-decay information explicitly contained in Lemma \ref{Lemma23} to the decay property of $z$ in $L^\infty$-norm
by an appropriate application of the parabolic smoothing estimates in the two-dimensional domain.

\begin{lemma}\label{Lemma41}
Let $(u,v,w,z)$ be the global solutions of \eqref{1.3} with $\beta\in (0,1)$. Then for all $t>0$, we have
\begin{equation}\label{4.1}
\|z(\cdot,t)\|_{L^{\infty}(\Omega)}\leq Ce^{- \gamma_1 t}
\end{equation}
with $\gamma_1=\frac {\min\{1-\beta,\delta_z\}}2$ and some $C>0$.
\end{lemma}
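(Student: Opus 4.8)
I want to upgrade the $L^1$-decay of $z$ from Lemma \ref{Lemma23} to an $L^\infty$-decay with rate $\gamma_1 = \frac{\min\{1-\beta,\delta_z\}}{2}$, i.e. roughly half the $L^1$-rate. The natural tool in a two-dimensional bounded domain is the variation-of-constants representation for $z$ against the Neumann heat semigroup $e^{tD_z\Delta}$, combined with the known smoothing ($L^p$–$L^q$) estimates.

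Let me think about what's available. From the $z$-equation $z_t = D_z\Delta z - \delta_z z - \rho uz + \beta w$, since $-\rho uz \le 0$ by positivity, I can drop that term and write the Duhamel formula
$$
z(\cdot,t) = e^{-\delta_z t}e^{tD_z\Delta}z_0 + \beta\int_0^t e^{-\delta_z(t-s)}e^{(t-s)D_z\Delta}w(\cdot,s)\,ds,
$$
or rather an inequality $0 \le z(\cdot,t) \le$ the same right-hand side with $w$ replaced by $\beta w$ (using the comparison/positivity of the semigroup). The first term decays like $e^{-\delta_z t}$ in $L^\infty$ after a short initial smoothing. For the second term I would use the standard Neumann-semigroup bound $\|e^{\tau D_z\Delta}\varphi\|_{L^\infty(\Omega)} \le c\,(1+\tau^{-n/2})\|\varphi\|_{L^1(\Omega)}$ with $n=2$, so $\tau^{-1}$ singularity, giving
$$
\|z(\cdot,t)\|_{L^\infty(\Omega)} \le c\,e^{-\delta_z t}\|z_0\|_{L^\infty} + c\beta\int_0^t (1+(t-s)^{-1})\,e^{-\delta_z(t-s)}\|w(\cdot,s)\|_{L^1(\Omega)}\,ds.
$$

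**The decay bookkeeping.** Now I insert Lemma \ref{Lemma23}, which gives $\|w(\cdot,s)\|_{L^1(\Omega)} \le Ce^{-\delta s}$ with $\delta = \min\{1-\beta,\delta_z\} = 2\gamma_1$. The integral becomes $\int_0^t (1+(t-s)^{-1})e^{-\delta_z(t-s)}e^{-\delta s}\,ds$. Splitting the factor $e^{-\delta s} = e^{-\delta t}e^{\delta(t-s)}$ and substituting $\tau = t-s$, the integral is $e^{-\delta t}\int_0^t (1+\tau^{-1})e^{-(\delta_z-\delta)\tau}e^{\delta\tau}$—I would instead keep the split clean by extracting the target rate $\gamma_1$ and showing the remaining $\tau$-integral over $(0,\infty)$ converges. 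The singularity $\tau^{-1}$ at $\tau=0^+$ is \emph{not} integrable, so I cannot use the crude $L^1\to L^\infty$ bound directly; the standard remedy is to interpolate through an intermediate exponent. I would first bound $\|w\|_{L^1}$-decay together with the uniform $L^\infty$-bound on $w$ from Lemma \ref{Lemma35} to get decay in some $L^q$ with $q>1$ (by interpolation $\|w\|_{L^q} \le \|w\|_{L^1}^{1/q}\|w\|_{L^\infty}^{1-1/q}$, still exponentially decaying with a rate $\delta/q$), and then use the gentler smoothing $\|e^{\tau D_z\Delta}\varphi\|_{L^\infty} \le c(1+\tau^{-1/q})\|\varphi\|_{L^q}$ whose singularity $\tau^{-1/q}$ \emph{is} integrable. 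Choosing $q$ close to $1$ keeps the rate loss from $\delta$ down to $\gamma_1=\delta/2$, which is exactly the stated rate (the factor $\tfrac12$ is the margin I sacrifice to the interpolation/integrability).

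**Main obstacle.** The delicate point is the non-integrable $\tau^{-1}$ singularity in the pure $L^1\to L^\infty$ estimate; everything else is routine semigroup bookkeeping. I would resolve it as above by the interpolation step, trading a little of the $L^1$-rate $\delta$ for integrability of the kernel, and verifying that the surviving exponent still exceeds $\gamma_1=\tfrac{\delta}{2}$ so that, after factoring out $e^{-\gamma_1 t}$, the remaining integral $\int_0^\infty(1+\tau^{-1/q})e^{-c\tau}\,d\tau$ is finite. One should also handle small $t$ separately (where the smoothing estimate carries an additive constant and $z$ is bounded by continuity on compact time intervals), so the final exponential bound holds for all $t>0$ after adjusting $C$. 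With the integral converging to a finite constant $C'$, collecting terms yields $\|z(\cdot,t)\|_{L^\infty(\Omega)} \le Ce^{-\gamma_1 t}$, as claimed.
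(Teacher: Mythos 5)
Your proposal is correct and follows essentially the same route as the paper: the paper likewise drops the $-\rho uz$ term by comparison, interpolates the $L^1$-decay of $w$ against its uniform $L^\infty$-bound (taking exactly $q=2$, via $\|w\|_{L^2(\Omega)}^2\le \|w\|_{L^\infty(\Omega)}\|w\|_{L^1(\Omega)}\le C e^{-\delta t}$), and then applies the $L^2\to L^\infty$ Neumann heat semigroup smoothing, whose $\sigma^{-1/2}$ singularity is integrable, in the Duhamel representation of $z$. The only cosmetic difference is that you keep the intermediate exponent $q\in(1,2]$ free, whereas the paper fixes $q=2$, which is precisely the choice that yields the stated rate $\gamma_1=\tfrac{1}{2}\min\{1-\beta,\delta_z\}$.
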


\begin{proof}
We invoke Lemma \ref{Lemma23} along with  \eqref{3.17}  to see that there exist $c_1>0$ and $c_2>0$ such that
\begin{equation}\label{4.2}
\|w(\cdot,t)\|_{L^2(\Omega)}^2\leq c_1\|w(\cdot,t)\|_{L^\infty(\Omega)} e^{-\delta t}\leq c_2 e^{-\delta t}
\end{equation}
with $\delta=\min\{1-\beta,\delta_z\}>0$.
According to known smoothing properties of the the Neumann heat semigroup  $\left ( e^{\sigma\Delta } \right )_{\sigma> 0}$ on the domain $  \Omega\subset \mathbb{R}^2$ (\cite{WinklerJDE}), there exists $c_3 >0$ such that  for each $\varphi\in  C^0(\Omega)$,
\begin{equation}\label{4.3}
\left \| e^{\sigma D_z\Delta } \varphi\right \|_{L^{\infty}(\Omega)}
\leq c_3( 1+\sigma^{-\frac 12})
 \| \varphi  \|_{L^2(\Omega)}.
\end{equation}
Due to  the nonnegativity of $z$ and the comparison principle, we may use \eqref{4.2} and
\eqref{4.3} to infer that
\begin{equation}\label{4.4}
\begin{array}{rl}
z(\cdot,t)&=
e^{t(D_z\Delta-\delta_z)}z_0+\displaystyle\int^{t}_0e^{(t-s)(D_z\Delta-\delta_z)}(\beta w-\rho uz)(\cdot,s)ds	\\
&\leq
e^{t(D_z\Delta-\delta_z)}z_0+\beta \displaystyle\int^{t}_0e^{(t-s)(D_z\Delta-\delta_z)}w(\cdot,s)ds	\\
& \leq \displaystyle e^{-\delta_z t}\|z_0\|_{L^{\infty}(\Omega)}+\beta c_3
\int^{t}_0(1+(t-s)^{-\frac12})e^{-\delta_z(t-s)}\|w(\cdot,s)\|_{L^{2}(\Omega)}ds\\
&\leq  \displaystyle e^{-\delta_z t}\|z_0\|_{L^{\infty}(\Omega)}+\beta c_2 c_3
\int^{t}_0(1+(t-s)^{-\frac12})e^{-\delta_z(t-s)} e^{-\frac{\delta s}2}ds\\
&\leq  \displaystyle e^{-\delta_z t}\|z_0\|_{L^{\infty}(\Omega)}+\beta c_2 c_3 c_4
e^{-min\{\delta_z, \frac{\delta }2)t }
\end{array}
\end{equation}
with  some $c_4>0$, which along with the  nonnegativity of $z$  entails  that \eqref{4.1} holds with $C=\|z_0\|_{L^{\infty}(\Omega)}
+\beta c_2 c_3 c_4.$
\end{proof}

Now thanks to the uniform decay property of $z$, a pointwise lower bound for $a=ue^{-\chi_u v}$ can be achieved
by means of an argument based on comparison with spatially flat functions,  which is documented as
follows.
\begin{lemma}\label{Lemma42}
Under the assumption of Lemma \ref{Lemma41},~one can find  constant $\gamma>0$ such that
\begin{equation}\label{4.5}
a(x,t)>\gamma~~\hbox{for all $(x,t)\in\Omega\times(0,\infty)$}.
\end{equation}	
\end{lemma}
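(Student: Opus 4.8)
The plan is to deduce \eqref{4.5} from a scalar ODE comparison, treating $a$ as a supersolution of an equation with a spatially homogeneous, time-dependent reaction and comparing it against spatially flat subsolutions. The starting observation is that in the first equation of \eqref{2.2} the haptotactic contribution $\chi_u a(\alpha_u a e^{\chi_u v} + \alpha_w b e^{\chi_w v})v$ is nonnegative, so that $a$ satisfies $a_t \ge D_u e^{-\chi_u v}\nabla\cdot(e^{\chi_u v}\nabla a) + \mu_u a(1 - a e^{\chi_u v}) - \rho a z$. Invoking $0 \le v \le m_v$ from Lemma \ref{Lemma22} and $a \ge 0$ gives $\mu_u a(1 - a e^{\chi_u v}) \ge \mu_u a(1 - e^{\chi_u m_v} a)$, while Lemma \ref{Lemma35} bounds $z$ so that $-\rho a z \ge -\rho \|z(\cdot,t)\|_{L^\infty(\Omega)} a$; hence $a$ is a supersolution of $\psi_t = D_u e^{-\chi_u v}\nabla\cdot(e^{\chi_u v}\nabla\psi) + g(\psi,t)$ with $g(\psi,t) := \mu_u \psi(1 - e^{\chi_u m_v}\psi) - \rho\|z(\cdot,t)\|_{L^\infty(\Omega)}\psi$.

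First I would let $\underline{a}(t)$ solve the spatially flat ODE $\underline a' = g(\underline a, t)$; since $\underline a$ is constant in $x$ it is an exact subsolution of the same problem, so by the parabolic comparison principle $a(x,t) \ge \underline a(t)$ whenever $\underline a$ starts below $\min_{x} a$. The decay of $z$ from Lemma \ref{Lemma41} is the crucial input: there is $t_1 > 0$ with $\rho\|z(\cdot,t)\|_{L^\infty(\Omega)} \le \tfrac{\mu_u}{2}$ for all $t \ge t_1$, whence $g(\psi,t) \ge \mu_u \psi\big(\tfrac12 - e^{\chi_u m_v}\psi\big)$ on that range. The latter logistic ODE has the positive stable equilibrium $\tfrac12 e^{-\chi_u m_v}$, so choosing $\underline a(t_1) = \min_{x} a(\cdot,t_1)$, which is positive because $a$ is a positive classical solution, yields $\underline a(t) \ge \min\{\underline a(t_1), \tfrac12 e^{-\chi_u m_v}\} > 0$ for all $t \ge t_1$, and therefore $a(x,t)$ is bounded below by a fixed positive constant for $t \ge t_1$.

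It remains to cover the transient interval $(0,t_1]$, on which the strong maximum principle guarantees $a > 0$; combined with the large-time bound this produces a single constant $\gamma > 0$ realizing \eqref{4.5}. The main obstacle I anticipate lies in this packaging: establishing the one-sided comparison against spatially flat functions rigorously (verifying that the constant-in-space $\underline a$ is admissible under the Neumann condition and that the comparison principle applies to the nonautonomous, locally Lipschitz reaction $g$), and, above all, exploiting the exponential smallness of $z$ in Lemma \ref{Lemma41} so that the infection sink $-\rho a z$ cannot drive $a$ toward zero before the logistic production $\mu_u a(1 - e^{\chi_u m_v}a)$ takes over. Obtaining a uniform bound on all of $(0,\infty)$ rather than only for $t$ bounded away from zero is the delicate point, and is where the interplay between short-time positivity and the long-time ODE analysis must be handled with care.
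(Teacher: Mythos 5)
Your proposal is correct and takes essentially the same route as the paper's own proof: drop the nonnegative haptotactic term, use the decay of $\|z(\cdot,t)\|_{L^\infty(\Omega)}$ from Lemma \ref{Lemma41} to fix $t_0$ with $\rho\|z(\cdot,t)\|_{L^\infty(\Omega)}\le \tfrac{\mu_u}{2}$ beyond $t_0$, and then compare $a$ with the spatially flat solution $\underline a$ of the resulting logistic ODE, whose lower bound $\min\{\underline a(t_0),\,\tfrac{\mu_u}{2}e^{-\chi_u \|v_0\|_{L^\infty(\Omega)}}\}$ is exactly the constant the paper produces. The paper settles the transient interval just as you outline, by positivity and continuity of $a$ on the compact set $\overline{\Omega}\times[0,t_0]$, and then takes $\gamma$ as the minimum of the two bounds.
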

\begin{proof}
According to Lemma \ref{Lemma41}, one can pick $t_0>0$ sufficiently large  such that for all $t>t_0$
\begin{equation}\label{4.6}
\|z(\cdot,t)\|_{L^{\infty}(\Omega)}\leq \frac{\mu_u}{2\rho}.
\end{equation}
Hence by means of a straightforward computation based
on \eqref{2.2},   one can see that
\begin{eqnarray*}
a_t&\geq &D_ue^{-\chi_u v}\nabla\cdot(e^{\chi_u v}\nabla a)+\mu_u a(1-ae^{\chi_u v})-\displaystyle\rho az\\
&\geq &D_ue^{-\chi_u v}\nabla \cdot(e^{\chi_u v}\nabla a)+a\left(\frac{\mu_u}{2}-e^{\chi_u \|v_0\|_{L^{\infty}(\Omega)}}a \right)
\end{eqnarray*}
for all $t>t_0$.

Now let $\underline{a}(t)$ be the smooth solution to the initial value problem:
\begin{equation}\label{4.7}
\left\{
\begin{array}{lll}
\underline{a}_t=\underline{a}\left(\frac{\mu_u}{2}-e^{\chi_u \|v_0\|_{L^{\infty}(\Omega)}}\underline{a}\right),\\
\underline{a}(t_0)=\inf\limits_{x\in{\Omega}}
\{ u(x,t_0)	e^{-\chi_u \|v_0\|_{L^{\infty}(\Omega)}}\},
\end{array}
\right.	
\end{equation}
then  through the explicit solution of above Bernoulli-type ODE, we have
\begin{equation}\label{4.8}
\underline{a}(t)\geq c_1\coloneqq \min\{\underline{a}(t_0),\frac{\mu_u}{2}e^{-\chi_u \|v_0\|_{L^{\infty}(\Omega)}}\}>0	
\end{equation}
for all $t>t_0$.
It is observed that
\begin{eqnarray*}
\underline{a}_t=D_u e^{-\chi_u v}\nabla \cdot(e^{\chi_u v}\nabla \underline{a})+	\underline{a}\left(\frac{\mu_u}{2}-e^{\chi_u \|v_0\|_{L^{\infty}(\Omega)}}\underline{a}\right)
\end{eqnarray*}
and $a(x,t_0)\geq \underline{a}(t_0)$. Hence from the comparison principle of the parabolic equation, one can conclude that
\begin{equation}\label{4.9}
a(x,t)\geq \underline{a}	(t)\geq c_1\qquad \hbox{for all $(x,t)\in \Omega\times(t_0,\infty)$}.
\end{equation}
On the other hand,~for any $t\in[0,t_0]$,~the continuity of $a$ allows us to find  constant $c_2\coloneqq \inf\limits_{(x,t)\in\overline{\Omega}\times[0,t_0]}a(x,t)$ such that
\begin{equation}\label{4.10}
a(x,t)\geq c_2\qquad\hbox{for all $(x,t)\in\Omega\times(0,t_0]$}.	
\end{equation}
and thereby \eqref{4.5} results from \eqref{4.9} and \eqref{4.10} with $\gamma \coloneqq \min\{c_1,c_2\}$.
\end{proof}

In view of the $v$-equation in \eqref{2.2}, the latter information immediately entails  the
 exponential decay of $v$  with respect to  $L^{\infty}(\Omega)$ norm.

\begin{lemma}\label{Lemma43}
Under the assumption of Lemma \ref{Lemma41}, we have for all $t>0$
\begin{equation}\label{4.11}
\|v(\cdot,t)\|_{L^{\infty}(\Omega)}\leq Ce^{-\gamma_2 t}	
\end{equation}
with $\gamma_2=\alpha_u\gamma$ and some $C>0$.
\end{lemma}
\begin{proof}
By recalling the outcomes of Lemma \ref{Lemma42},~we have
\begin{equation*}
v_t=-(\alpha_u u+\alpha_w w)v\leq -\alpha_u\gamma v	
\end{equation*}
and hence
$$
v(x,t)\leq v_0(x) e^{-\alpha_u\gamma t}	
$$
from which \eqref{4.11} follows.	\end{proof}

Furthermore upon the  decay property of $v$ with respect to $L^\infty(\Omega)$,  one can derive the
following basic stabilization feature of $a(=e^{-\chi_u v}u)$.
\begin{lemma}\label{Lemma44}
Let the assumption of Lemma \ref{Lemma41} hold. Then we have
\begin{equation}\label{4.12}
\int^{\infty}_0\int_{\Omega}\frac{|\nabla a|^2}{a^2}< \infty
	\end{equation}
as well as
\begin{equation}\label{4.13}
\int^{\infty}_0\int_{\Omega}(u-1)^2< \infty.
	\end{equation}
\end{lemma}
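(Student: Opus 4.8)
We need two integrability statements for $a=e^{-\chi_u v}u$: that $\int_0^\infty\int_\Omega |\nabla a|^2/a^2<\infty$ and that $\int_0^\infty\int_\Omega(u-1)^2<\infty$. The natural quantity to test against is $a^{-1}$, which is legitimate because Lemma 4.2 gives the lower bound $a>\gamma>0$ on all of $\Omega\times(0,\infty)$, so $a^{-1}$ is bounded and smooth. The key algebraic fact is that testing the $a$-equation by $a^{-1}$ produces exactly the good dissipation term $\int_\Omega e^{\chi_u v}|\nabla a|^2/a^2$, and the reaction term $f(a,b,v,c)$ contributes (after dividing by $a$) a term $\mu_u(1-ae^{\chi_u v})$ whose leading part is comparable to $(u-1)$, which is what feeds the second integral.

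**The plan.** I would introduce the functional $\int_\Omega (\log a - \chi_u v)$ — equivalently track $\frac{d}{dt}\int_\Omega \log a$ — or, more cleanly, compute $\frac{d}{dt}\int_\Omega e^{\chi_u v}\log a$ using the identity (3.2) for $(e^{\chi_u v}a)_t$. Testing the first equation of (2.2) by $a^{-1}$ (with the $e^{-\chi_u v}\nabla\cdot(e^{\chi_u v}\nabla a)$ structure) yields, after integration by parts, a dissipation term $-D_u\int_\Omega e^{\chi_u v}|\nabla a|^2/a^2$ on the right and the production term $\int_\Omega e^{\chi_u v}f/a$ on the left. Expanding $f/a = \mu_u(1-ae^{\chi_u v}) - \rho z + \chi_u(\alpha_u a e^{\chi_u v}+\alpha_w b e^{\chi_u v})v$, the dominant contribution $-\mu_u\int_\Omega e^{\chi_u v}(ae^{\chi_u v}-1) = -\mu_u\int_\Omega e^{\chi_u v}(u-1)$ is the one I want to convert into control of $\int_\Omega(u-1)^2$. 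The remaining terms — those involving $z$ and those involving $v$ — are error terms that I expect to be controllable by the exponential decay of $z$ (Lemma 4.1) and of $v$ (Lemma 4.3), each integrable in time.

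**Organizing the estimate.** Concretely, I would derive a differential inequality of the shape
\[
\frac{d}{dt}\,\mathcal{G}(t) + D_u\int_\Omega e^{\chi_u v}\frac{|\nabla a|^2}{a^2} + c\int_\Omega(u-1)^2 \;\le\; h(t),
\]
where $\mathcal{G}(t)=-\int_\Omega e^{\chi_u v}\log a$ (or a closely related bounded quantity) and $h(t)$ collects the $z$- and $v$-error terms and is integrable over $(0,\infty)$ thanks to Lemmas 4.1 and 4.3. The term $\mu_u\int_\Omega e^{\chi_u v}(u-1)$ does not have a fixed sign, so I would use the pointwise bound $e^{\chi_u v}(u-1)=(u-1)+(e^{\chi_u v}-1)u$ together with the smallness of $e^{\chi_u v}-1\le C v$ (again exponentially decaying) to absorb the spurious part into $h(t)$ and retain a genuine coercive $\int_\Omega(u-1)^2$ contribution via Young's inequality. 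Since $\mathcal{G}(t)$ is bounded (by the $L\log L$ and $L^\infty$ bounds of Lemma 3.5, combined with $a>\gamma$), integrating the differential inequality in time and using $\int_0^\infty h(t)\,dt<\infty$ yields both (4.12) and (4.13) at once, after noting $\int_\Omega e^{\chi_u v}|\nabla a|^2/a^2 \ge \int_\Omega |\nabla a|^2/a^2$.

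**Main obstacle.** The delicate point is the sign-indefinite term $\mu_u\int_\Omega e^{\chi_u v}(u-1)$: turning it into a coercive $\int_\Omega(u-1)^2$ requires carefully separating the ``$u-1$'' part (which, upon multiplying by the correct weight and using $ae^{\chi_u v}=u$, is what the logistic degradation supplies) from the weight fluctuation $e^{\chi_u v}-1$. Here the quadratic degradation $\mu_u u(1-u)$ is essential — it is precisely what makes the map $a\mapsto ae^{\chi_u v}=u$ near the equilibrium $u\equiv1$ produce a negative-definite quantity modulo decaying errors. I would therefore expect the bookkeeping of these error terms, and the verification that each is dominated by an exponentially decaying (hence time-integrable) factor, to be the crux; the integration-by-parts and the extraction of the dissipation term are routine once the lower bound $a>\gamma$ is in hand.
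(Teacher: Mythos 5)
Your overall strategy (an $a^{-1}$-type testing procedure, error terms controlled by the exponential decay of $z$ and $v$ from Lemmas \ref{Lemma41} and \ref{Lemma43}, boundedness of the functional via $\gamma\le a\le C$, then time integration) is the same family of argument as the paper's, but the concrete functional you propose, $\mathcal{G}(t)=-\int_\Omega e^{\chi_u v}\log a$, cannot produce the coercive term $\int_\Omega(u-1)^2$, and your suggested repair does not close this gap. Testing the first equation of \eqref{2.2} by $e^{\chi_u v}a^{-1}$ gives exactly
\[
\mathcal{G}'(t)+D_u\int_\Omega e^{\chi_u v}\frac{|\nabla a|^2}{a^2}
=\mu_u\int_\Omega e^{\chi_u v}(u-1)+\rho\int_\Omega e^{\chi_u v}z-\chi_u\int_\Omega e^{\chi_u v}(\alpha_u u+\alpha_w w)v-\chi_u\int_\Omega v_t e^{\chi_u v}\log a,
\]
so the term you must handle is \emph{linear} in $u-1$. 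After your splitting $e^{\chi_u v}(u-1)=(u-1)+(e^{\chi_u v}-1)u$ and discarding the $v$-part as time-integrable, you are left with $\mu_u\int_\Omega(u-1)$ on the right-hand side. This quantity is sign-indefinite, is not a priori integrable in time (the mass bound $\int_\Omega u\le m_u$ only yields $\int_0^T\int_\Omega(u-1)\le T\,(m_u-|\Omega|)$, which may grow linearly in $T$), and Young's inequality cannot convert it into $-c\int_\Omega(u-1)^2+h(t)$: Young bounds a linear term \emph{by} a quadratic one, it never generates a negative-definite quadratic contribution from a linear one. With your $\mathcal{G}$ there is simply no quadratic term anywhere in the identity, so neither \eqref{4.12} nor \eqref{4.13} follows.

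The missing idea is to test with $1-a^{-1}$ rather than $a^{-1}$, i.e.\ to use the functional $\int_\Omega e^{\chi_u v}\bigl(a-1-\log a\bigr)$, as the paper does in \eqref{4.14}; equivalently, add to your $\mathcal{G}$ the evolution of $\int_\Omega e^{\chi_u v}a=\int_\Omega u$, namely $\frac{d}{dt}\int_\Omega u=\mu_u\int_\Omega u(1-u)-\rho\int_\Omega uz$, whose linear part cancels $\mu_u\int_\Omega(u-1)$ exactly (note $u(1-u)=-(u-1)^2-(u-1)$). With this choice the logistic term enters paired as $\mu_u\int_\Omega e^{\chi_u v}(a-1)(1-u)$, and the pointwise inequality \eqref{4.15}, $(1-a)(1-u)=(1-u)^2+(u-a)(1-u)\ge\frac12(1-u)^2-a^2(e^{\chi_u v}-1)^2$ with $u-a=a(e^{\chi_u v}-1)$ and $(e^{\chi_u v}-1)^2\le 4\chi_u^2v^2$ for $t$ large, is what supplies the coercive $\frac{\mu_u}{2}\int_\Omega(1-u)^2$ up to an $O\bigl(\int_\Omega v^2\bigr)$ error. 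In addition, this functional is pointwise nonnegative ($s-1-\log s\ge0$) and the weight term $\chi_u\int_\Omega v_te^{\chi_u v}(a-1-\log a)$ is nonpositive since $v_t\le0$, which is what lets one drop the terminal value after time integration. The remaining steps of your plan (integrability of the $z$- and $v$-errors, boundedness at the initial time, $e^{\chi_u v}\ge1$) then go through exactly as in the paper.
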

\begin{proof}
In view of $s-1-\log s>0$ for all $s>0$ and $v_t<0$, we can conclude that
\begin{equation}\label{4.14}
\begin{array}{rl}
\displaystyle\frac{d}{dt}\int_{\Omega}e^{\chi_u v}(a-1-\log a)=
& \displaystyle\int_{\Omega}e^{\chi_u v}(a-1-\log a)v_t+\int_{\Omega}e^{\chi_u v}\left(\frac{a-1}{a}\right)a_t\\[2mm]
\leq& \displaystyle-D_u\int_{\Omega}e^{\chi_u v}\frac{|\nabla a|^2}{a^2}+\mu_u\int_{\Omega}	e^{\chi_u v}(a-1)(1-u)\\
& +\displaystyle\chi_u\int_{\Omega}e^{\chi_u v}(a-1)(\alpha_uu+\alpha_w w)v-
\displaystyle\rho\int_{\Omega}e^{\chi_u v}z(a-1).
\end{array}
\end{equation}
Here by Young's inequality,
\begin{equation}\label{4.15}
\begin{array}{rl}
(1-a)(1-u)&=(1-u)^2+(u-a)(1-u)\\
&\geq \displaystyle\frac 12 (1-u)^2- a^2(e^{\chi_u v}-1)^2.
\end{array}
\end{equation}
Due to the fact that $e^s\leq 1+2s$ for all $s\in [0,log2]$,  \eqref{4.11} allows us to fix a $t_1>1$ suitable large such that for all $t\geq t_1$,
$$
(e^{\chi_u v(\cdot,t)}-1)^2\leq 4\chi_u ^2 v^2(\cdot,t),
$$
which together with \eqref{4.15} entails that for $t\geq t_1$
$$
(1-a)(1-u)\geq \displaystyle\frac 12 (1-u)^2-4 a^2\chi_u ^2 v^2.
$$
Therefore we infer from \eqref{3.17} and \eqref{4.14} that for all $t\geq t_1$
\begin{equation*}
\begin{array}{rl}
&\displaystyle \frac{d}{dt}\int_{\Omega}e^{\chi_u v}(a-1-\log a)+D_u\int_{\Omega}e^{\chi_u v}\frac{|\nabla a|^2}{a^2}+
\mu_u\int_{\Omega}(u-1)	 ^2\\[2mm]
\leq &\displaystyle
c_1\int_{\Omega}z+c_1\int_{\Omega}v
\end{array}
\end{equation*}
with some $c_1>0$. After a time integration this leads to
\begin{equation*}
\begin{array}{rl}
&\displaystyle D_u\int^t_{t_1}\int_{\Omega}e^{\chi_u v}\frac{|\nabla a|^2}{a^2}+
\mu_u\int^{t}_{t_1} \int_{\Omega}(u-1)^2\\[2mm]
\leq & c_1 \displaystyle\int^{t}_{t_1}\int_{\Omega}z+c_1\int^t_{t_1}\int_{\Omega}v+
\int_{\Omega}e^{\chi_u m_v}(a(\cdot,t_1)-1-\log a (\cdot,t_1))
\end{array}
\end{equation*}
and thereby  implies that  both \eqref{4.12} and \eqref{4.13} is valid  thanks to \eqref{4.1} and \eqref{4.11}.	
\end{proof}

In order to improve yet quite weak decay information of $u$, we turn to consider the exponential decay
 properties of  $\int_{\Omega}|\nabla v(\cdot,t)|^2$, rather than the integrability  of   $a_{t}$ in  $L^2((0,\infty); L^2(\Omega))$.  As the first step toward this, we first show the convergence  of integral $\int^{\infty}_0\int_{\Omega}|\nabla v|^2$, which is stated below.
\begin{lemma}\label{Lemma45}
Let the assumption of Lemma \ref{Lemma41} hold. Then we have
\begin{equation}\label{4.16}
\int^{\infty}_0\int_{\Omega}|\nabla v|^2<\infty.	
\end{equation}	
\end{lemma}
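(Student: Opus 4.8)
The plan is to exploit the $v$-equation directly, differentiating it in space and testing against $\nabla v$ so as to produce a dissipation term for $\int_\Omega |\nabla v|^2$, while controlling the reaction contributions by the decay and boundedness already established. Since $v_t = -(\alpha_u u + \alpha_w w)v$ is a pure ODE in $t$ for each fixed $x$, spatial gradients of $v$ are generated solely through the spatial dependence of the coefficient $\alpha_u u + \alpha_w w$ and of the initial datum $v_0$. Writing $h := \alpha_u u + \alpha_w w$, we have $\nabla v_t = -h\,\nabla v - v\,\nabla h$, so testing by $\nabla v$ gives
\begin{equation*}
\frac{1}{2}\frac{d}{dt}\int_\Omega |\nabla v|^2 = -\int_\Omega h\,|\nabla v|^2 - \int_\Omega v\,\nabla h\cdot\nabla v.
\end{equation*}
The first term on the right is nonpositive (indeed $\le -\alpha_u\gamma\int_\Omega|\nabla v|^2$ by the lower bound $a>\gamma$ from Lemma \ref{Lemma42}, once translated back to $u$), and the second must be absorbed or estimated using the $L^\infty$-decay of $v$ from Lemma \ref{Lemma43} together with the gradient information on $u,w$ available through Lemma \ref{Lemma44}.

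First I would make the dissipative structure precise: using $u = ae^{\chi_u v} > \gamma$ (from \eqref{4.5}) one obtains $h \ge \alpha_u\gamma$, whence
\begin{equation*}
\frac{1}{2}\frac{d}{dt}\int_\Omega |\nabla v|^2 + \alpha_u\gamma\int_\Omega |\nabla v|^2 \le \left|\int_\Omega v\,\nabla h\cdot\nabla v\right|.
\end{equation*}
Next I would bound the right-hand side by Young's inequality in the form $|\int_\Omega v\,\nabla h\cdot\nabla v| \le \tfrac{\alpha_u\gamma}{2}\int_\Omega|\nabla v|^2 + C\int_\Omega v^2|\nabla h|^2$, so that after absorbing the gradient term we are left needing $\int_0^\infty\int_\Omega v^2|\nabla h|^2 < \infty$. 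Since $|\nabla h|^2 \lesssim |\nabla u|^2 + |\nabla w|^2$ and $v$ decays exponentially in $L^\infty$ by \eqref{4.11}, the factor $\|v\|_{L^\infty}^2 \le Ce^{-2\gamma_2 t}$ can be pulled out, reducing matters to the time-integrability of $\int_\Omega(|\nabla u|^2+|\nabla w|^2)$ weighted against an integrable exponential. The spatial-gradient control comes from Lemma \ref{Lemma44}: \eqref{4.12} gives $\int_0^\infty\int_\Omega |\nabla a|^2/a^2 < \infty$, and combined with the pointwise lower and upper bounds $\gamma < a \le C$ this yields $\int_0^\infty\int_\Omega|\nabla a|^2 < \infty$; converting $\nabla u = e^{\chi_u v}(\nabla a + \chi_u a\nabla v)$ back to $u$ introduces only lower-order $\nabla v$ terms already controlled.

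The main obstacle I anticipate is precisely this last conversion: the gradient of $u$ (and of $w=be^{\chi_w v}$) carries a $\nabla v$ contribution, so $\int_\Omega v^2|\nabla h|^2$ feeds back a term like $\int_\Omega v^2 a^2|\nabla v|^2$ into the estimate. Because $v$ is small for large $t$, this feedback term is of the form $\|v\|_{L^\infty}^2\int_\Omega|\nabla v|^2$ with a coefficient that tends to $0$; I would fix $t_\ast$ large enough (using \eqref{4.11}) that $C\|v(\cdot,t)\|_{L^\infty}^2 \le \tfrac{\alpha_u\gamma}{4}$ for $t\ge t_\ast$, so the feedback is absorbed into the dissipation. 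The genuinely non-feedback part, $\int_0^\infty e^{-2\gamma_2 t}\int_\Omega|\nabla a|^2\,dt$, is finite since $\int_\Omega|\nabla a|^2$ is bounded (by the $L^\infty$-bounds of Lemma \ref{Lemma35} together with \eqref{4.12}) and the exponential is integrable — or even more cheaply, since $\int_0^\infty\int_\Omega|\nabla a|^2<\infty$ already. Integrating the resulting differential inequality over $(t_\ast,\infty)$ and handling the compact interval $[0,t_\ast]$ by continuity and the regularity of the classical solution then yields \eqref{4.16}.
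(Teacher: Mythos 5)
Your overall strategy --- differentiating the $v$-equation, testing with $\nabla v$, extracting dissipation from the lower bound $u\geq a>\gamma$ of Lemma \ref{Lemma42}, and reducing everything via Young's inequality to weighted time-integrability of $\int_\Omega v^2|\nabla h|^2$ --- is exactly the second half of the paper's proof (the paper in fact keeps the terms $-\alpha_u\chi_u\int_\Omega v a e^{\chi_u v}|\nabla v|^2$ and $-\alpha_w\chi_w\int_\Omega v b e^{\chi_w v}|\nabla v|^2$ with their favorable sign and simply drops them, so no smallness-absorption for the feedback is even needed; your absorption via $\|v\|_{L^\infty}^2\to 0$ works as well). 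However, there is a genuine gap: $\nabla h$ contains $\alpha_w\nabla w$, and $\nabla w=e^{\chi_w v}\left(\nabla b+\chi_w b\nabla v\right)$, so after Young's inequality you must control $\int_0^\infty\int_\Omega v^2|\nabla b|^2$. Your proposal only treats the $\nabla a$ part: the ``genuinely non-feedback part'' you estimate is $\int_0^\infty e^{-2\gamma_2 t}\int_\Omega|\nabla a|^2\,dt$, for which Lemma \ref{Lemma44} together with $\gamma<a\leq C$ indeed suffices. But no analogue of \eqref{4.12} exists for $b$: Lemma \ref{Lemma44} says nothing about $\nabla b$, and neither classical regularity (which bounds $\nabla b$ only on compact time intervals) nor the $L^\infty$ bound of Lemma \ref{Lemma35} yields $\int_\Omega|\nabla b(\cdot,t)|^2$ bounded in $t$, let alone time-integrable. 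So the $\nabla b$ contribution in your differential inequality is left uncontrolled, and the argument does not close.

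This is precisely why the paper's proof has a first step that your proposal is missing: it tests the $b$-equation in \eqref{2.2} with $e^{\chi_w v}b$ to obtain
\begin{equation*}
\frac{d}{dt}\int_\Omega e^{\chi_w v}b^2+2D_w\int_\Omega e^{\chi_w v}|\nabla b|^2+2\int_\Omega e^{\chi_w v}b^2\leq c_1\left(\int_\Omega v+\int_\Omega z\right),
\end{equation*}
whose right-hand side is integrable over $(0,\infty)$ by Lemma \ref{Lemma43} and Lemma \ref{Lemma23}; this gives $\int_0^\infty\int_\Omega|\nabla b|^2<\infty$ (estimate \eqref{4.17}), after which your computation (or the paper's sign-based variant, leading to \eqref{4.18}) concludes. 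Incidentally, your parenthetical claim that $\int_\Omega|\nabla a|^2$ is bounded ``by Lemma \ref{Lemma35} together with \eqref{4.12}'' is not justified --- finiteness of a space-time integral does not imply a uniform-in-time spatial bound --- but your fallback observation that $\int_0^\infty\int_\Omega|\nabla a|^2<\infty$ already suffices is correct, so that particular flaw is harmless; the missing $\nabla b$ estimate is the real defect.
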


\begin{proof}
Multiplying the second equation in \eqref{2.2} by $e^{\chi_w v}b$ and integrating by parts, one
can conclude that
\begin{eqnarray*}
\frac{d}{dt}\int_{\Omega}e^{\chi_w v}b^2+2D_w\int_{\Omega}e^{\chi_w v}|\nabla b|^2+2 \int_{\Omega}e^{\chi_w v}b^2\\
=2\rho\int_{\Omega}abz e^{\chi_w v}+2\chi_w\int_{\Omega}e^{\chi_w v}b^2(\alpha_u a e^{\chi_u v}+\alpha_w b e^{\chi_w v})v,	
\end{eqnarray*}
which together with the global-in-time boundedness property of $a$ and $b$, implies that	
\begin{eqnarray*}
\frac{d}{dt}\int_{\Omega}e^{\chi_w v}b^2+2D_w\int_{\Omega}e^{\chi_w v}|\nabla b|^2+2\int_{\Omega}e^{\chi_w v}b^2
\leq c_1\left(\int_{\Omega}v+\int_{\Omega}z\right)	
\end{eqnarray*}
for some $c_1>0$.
Hence according to Lemma \ref{Lemma43} and Lemma \ref{Lemma23}, we can get
\begin{equation}\label{4.17}
\int^{\infty}_0\int_{\Omega}|\nabla b|^2<\infty.	
\end{equation}
Now since
 $$
 \nabla v_t=-(\alpha_u \nabla u + \alpha_w \nabla w )v-(\alpha_u u + \alpha_w  w ) \nabla v,
 $$
a direct computation shows that
\begin{equation*}
\begin{array}{rl}
&\displaystyle \frac{1}{2}\frac{d}{dt}\int_{\Omega}|\nabla v|^2+\int_{\Omega}(u+w)|\nabla v|^2\\
=&-\alpha_w \chi_w\displaystyle \int_{\Omega}v e^{\chi_w v} b|\nabla v|^2-
\alpha_w\displaystyle\int_{\Omega}v e^{\chi_w v}\nabla v\cdot \nabla b-
\alpha_u \displaystyle\int_{\Omega}v\nabla v\cdot \nabla u\\
\leq&-\alpha_w\displaystyle\int_{\Omega}v e^{\chi_w v}\nabla v\cdot \nabla b
-\alpha_u \displaystyle\int_{\Omega}v   e^{\chi_u v}\nabla v\cdot \nabla a.	
\end{array}
\end{equation*}
Therefore, recalling the pointwise lower bound in \eqref{4.5}, \eqref{3.17} and by the Young inequality, we can find
find a constant $c_2>0$ such that
\begin{equation}\label{4.18}
\frac{d}{dt}\int_{\Omega}|\nabla v|^2+\gamma \int_{\Omega}|\nabla v|^2\leq c_2
\left(
\int_{\Omega}|\nabla b|^2+
\int_{\Omega}\frac{|\nabla a|^2}{a^2}
\right).	
\end{equation}
Hence combining this with \eqref{4.17} and  \eqref{4.12}, \eqref{4.16} is  actually valid.
\end{proof}

Beyond the integrability of $\int_{\Omega}|\nabla v|^2$ over $(0,\infty)$, we make use of the explicit expression  of
$\nabla v$ together with \eqref{4.17} and  \eqref{4.12}  to identify that  $\int_{\Omega}|\nabla v|^2$  exponentially decays.

\begin{lemma}\label{Lemma46}
Under the assumption of Lemma \ref{Lemma41},
one can find constant $C>0$
such that
\begin{equation}\label{4.19}
\int_{\Omega}|\nabla v|^2\leq C(t+1)e^{-2\gamma t}~~\hbox{for all $t>0$,}	
\end{equation}
where $\gamma $ is given by Lemma \ref{Lemma42}.	
\end{lemma}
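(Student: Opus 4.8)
The plan is to exploit that the third equation of \eqref{1.3} is a pure ODE in time at each fixed spatial point, so that $v$ can be solved explicitly and $\nabla v$ written in closed form; this is sharper than reusing the differential inequality \eqref{4.18}, which only yields convolution-type decay driven by the integrability of its right-hand side rather than a genuine exponential rate. Setting $P(x,t):=\int_0^t(\alpha_u u(x,s)+\alpha_w w(x,s))\,ds$, integration of $v_t=-(\alpha_u u+\alpha_w w)v$ gives $v(x,t)=v_0(x)e^{-P(x,t)}$, and differentiation in $x$ yields
\[
\nabla v(x,t)=e^{-P(x,t)}\bigl(\nabla v_0(x)-v_0(x)\nabla P(x,t)\bigr),\qquad \nabla P=\int_0^t(\alpha_u\nabla u+\alpha_w\nabla w)\,ds.
\]
Squaring and integrating, one obtains
\[
\int_\Omega|\nabla v|^2\le 2\int_\Omega e^{-2P}|\nabla v_0|^2+2\int_\Omega e^{-2P}v_0^2|\nabla P|^2,
\]
so it suffices to bound the scalar factor $e^{-2P}$ from above and to control the growth of $\int_\Omega|\nabla P|^2$.

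Two ingredients drive the estimate. First, the pointwise lower bound of Lemma \ref{Lemma42} gives $u=e^{\chi_u v}a\ge a>\gamma$ because $v\ge 0$, whence $P(x,t)\ge\alpha_u\gamma t$ and therefore $e^{-2P}\le e^{-2\alpha_u\gamma t}$ uniformly in $x$; this supplies the exponential factor. Second, I would estimate the growth of $\int_\Omega|\nabla P|^2$ by Cauchy--Schwarz in time: with $G:=\alpha_u\nabla u+\alpha_w\nabla w$ one has $\nabla P=\int_0^t G\,ds$ and hence
\[
\int_\Omega|\nabla P|^2\le t\int_0^t\!\int_\Omega|G|^2.
\]
Writing $\nabla u=e^{\chi_u v}(\nabla a+\chi_u a\nabla v)$ and $\nabla w=e^{\chi_w v}(\nabla b+\chi_w b\nabla v)$ and using $0\le v\le m_v$ together with the $L^\infty$-bounds of $a,b$ from \eqref{3.17}, one gets $|G|^2\le C(|\nabla a|^2+|\nabla b|^2+|\nabla v|^2)$, so that
\[
\int_0^t\!\int_\Omega|G|^2\le C\int_0^\infty\!\int_\Omega\bigl(|\nabla a|^2+|\nabla b|^2+|\nabla v|^2\bigr)=:CK<\infty,
\]
where the three integrals are finite by \eqref{4.12} (combined with the bound $a\le M$ to pass from $|\nabla a|^2/a^2$ to $|\nabla a|^2$), by \eqref{4.17}, and by \eqref{4.16}, respectively. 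Thus $\int_\Omega|\nabla P|^2\le CKt$.

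Combining the two ingredients yields
\[
\int_\Omega|\nabla v|^2\le 2e^{-2\alpha_u\gamma t}\bigl(\|\nabla v_0\|_{L^2(\Omega)}^2+\|v_0\|_{L^\infty(\Omega)}^2\,CKt\bigr)\le C(t+1)e^{-2\alpha_u\gamma t},
\]
which is exactly \eqref{4.19} with decay rate $2\alpha_u\gamma$ (consistent with the rate $\gamma_2=\alpha_u\gamma$ for $v$ in Lemma \ref{Lemma43}). The main obstacle is the feedback of $\nabla v$ into $G$: because $\nabla u$ and $\nabla w$ unavoidably carry a $\nabla v$ contribution through $u=e^{\chi_u v}a$ and $w=e^{\chi_w v}b$, the bound on $\int_0^t\!\int_\Omega|G|^2$ cannot be closed using the integrability of $\nabla a$ and $\nabla b$ alone, and the spatio-temporal integrability \eqref{4.16} of $\nabla v$ itself, established in Lemma \ref{Lemma45}, is indispensable. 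The polynomial prefactor $(t+1)$ is the price of the Cauchy--Schwarz-in-time step, which only gives $\int_\Omega|\nabla P|^2\lesssim t$ rather than a uniform bound; it is harmless, being dominated by the exponential and absorbable at the next stage.
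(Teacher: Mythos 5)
Your proposal is correct and follows essentially the same route as the paper: solve the $v$-equation explicitly, differentiate in $x$, use the lower bound $u\ge a>\gamma$ from Lemma \ref{Lemma42} for the exponential factor, apply Cauchy--Schwarz in time, and control $\int_0^\infty\!\int_\Omega(|\nabla a|^2+|\nabla b|^2+|\nabla v|^2)$ via \eqref{4.12}, \eqref{4.17} and \eqref{4.16}. In fact you track the constants slightly more carefully than the paper, whose displayed formula for $\nabla v$ drops the coefficients $\alpha_u,\alpha_w$, so your rate $2\alpha_u\gamma$ (consistent with $\gamma_2=\alpha_u\gamma$ in Lemma \ref{Lemma43}) is the honest one.
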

\begin{proof}
On the basis of the $v$-equation in \eqref{1.3}, we have
\begin{equation*}
\nabla v(\cdot,t)= \nabla v(\cdot,0)e^{-\int^{t}_0(u+w)(\cdot,s)ds} -v(\cdot,0)e^{-\int^{t}_0(u+w)(\cdot,s)ds}
\int^{t}_0(\nabla u(\cdot,s)+\nabla w(\cdot,s))ds	
\end{equation*}
which along with \eqref{4.5} and the Young inequality entails that
\begin{equation}\label{4.20}
\int_{\Omega}|\nabla v|^2\leq
2e^{-2\gamma t}\|\nabla v_0\|^2_{L^2(\Omega)}+4 te^{-2\gamma t}\|v_0\|^2_{L^\infty(\Omega)}
(\int^{t}_0\int_{\Omega}|\nabla u|^2ds+\int^{t}_0\int_{\Omega}|\nabla w|^2ds)	.
\end{equation}	
Furthermore observing that
\begin{equation*}
|\nabla w|\leq \chi_w e^{\chi_w v}|\nabla v|b+e^{\chi_w v}|\nabla b|	
\end{equation*}
as well as \begin{equation*}
|\nabla u|\leq \chi_u e^{\chi_u v}|\nabla v|a+e^{\chi_u v}|\nabla a|,	
\end{equation*}
we  conclude from \eqref{4.20} that there exists $c_1>0$ such that
\begin{equation}\label{4.21}
\begin{array}{rl}
\displaystyle\int_{\Omega}|\nabla v(\cdot,t)|^2
&\leq
c_1e^{-2\gamma t}+c_1 te^{-2\gamma t}
\displaystyle\int^{t}_0\int_{\Omega}(|\nabla b|^2+|\nabla a|^2 +|\nabla v|^2)ds\\[3mm]
&\leq
c_1e^{-2\gamma t}+c_1 te^{-2\gamma t}
\displaystyle\int^{\infty}_0\int_{\Omega}(|\nabla b|^2+|\nabla a|^2 +|\nabla v|^2)ds
\end{array}
\end{equation}	
and thus
$$
\displaystyle\int_{\Omega}|\nabla v(\cdot,t)|^2\leq c_2(t+1)e^{-2\gamma t}
$$
with some $c_2>0$,
thanks to \eqref{4.17}, \eqref{4.12} and  \eqref{4.16}.
\end{proof}
On the basis of smoothing estimates for the Neumann heat semigroup on $\Omega$, and decay information provided by Lemma \ref{Lemma46},
we can make sure that $u-1$ decays  exponentially with respect to  $L^p(\Omega)$-norm.

\begin{lemma}\label{Lemma47}
Suppose the condition in Lemma \ref{Lemma41} holds, then there exists $\eta_1>0$ such that for every $p\geq 2$,
\begin{equation}\label{4.22}
\|u(\cdot,t)-1\|_{L^p(\Omega)}\leq C(p) e^{-\eta_1 t}
\end{equation}	
with some $C(p)>0$ for all $t>0$.
\end{lemma}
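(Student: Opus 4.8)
Writing $\phi:=u-1$ and recalling that $u$ solves the first equation of \eqref{1.3}, the plan is to test this equation against $|\phi|^{p-2}\phi$ and to read off an autonomous differential inequality for $\int_\Omega|\phi|^p$. The argument rests on two ingredients already secured: the pointwise lower bound
$$
u=e^{\chi_u v}a\ge a>\gamma,
$$
which holds because $v\ge0$ forces $e^{\chi_u v}\ge1$ and Lemma \ref{Lemma42} gives $a>\gamma$, and which turns the logistic reaction into a genuine \emph{linear} damping acting uniformly in $\phi$; and the exponential smallness of the two forcing quantities $\int_\Omega|\nabla v|^2$ and $\|z\|_{L^\infty(\Omega)}$, supplied by Lemma \ref{Lemma46} and Lemma \ref{Lemma41} respectively.

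First I would compute, using $\mu_u u(1-u)=-\mu_u u\phi$ together with $u>\gamma$,
\begin{equation*}
\frac1p\frac{d}{dt}\int_\Omega|\phi|^p+D_u(p-1)\int_\Omega|\phi|^{p-2}|\nabla\phi|^2+\mu_u\gamma\int_\Omega|\phi|^p\le \xi_u(p-1)\int_\Omega|\phi|^{p-2}u\,\nabla\phi\cdot\nabla v-\rho\int_\Omega|\phi|^{p-2}\phi\,uz,
\end{equation*}
where the reaction term already contributes the decisive damping coefficient $\mu_u\gamma>0$ on the left. The haptotactic term is the delicate one: I would split it by Young's inequality into half of the diffusion dissipation plus $c(p)\int_\Omega|\phi|^{p-2}u^2|\nabla v|^2$, and then use the global $L^\infty$ bounds \eqref{3.17} on $u$ and $\phi$ to estimate the latter by $c(p)\int_\Omega|\nabla v|^2$. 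The reactive sink is treated by Young's inequality as well, producing $\tfrac{\mu_u\gamma}{2}\int_\Omega|\phi|^p+c(p)\|z\|_{L^\infty(\Omega)}^p|\Omega|$, whose first summand is reabsorbed on the left.

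Collecting these estimates yields
\begin{equation*}
\frac{d}{dt}\int_\Omega|\phi|^p+\tfrac{p\mu_u\gamma}{2}\int_\Omega|\phi|^p\le c(p)\int_\Omega|\nabla v|^2+c(p)\|z\|_{L^\infty(\Omega)}^p,
\end{equation*}
whose right-hand side is exponentially small: Lemma \ref{Lemma46} gives $\int_\Omega|\nabla v|^2\le c(t+1)e^{-2\gamma t}$ and Lemma \ref{Lemma41} gives $\|z\|_{L^\infty(\Omega)}^p\le c\,e^{-p\gamma_1 t}$. A standard ODE comparison then furnishes $\int_\Omega|\phi|^p\le C(p)(t+1)e^{-\mu t}$ with $\mu=\min\{p\mu_u\gamma,\,2\gamma,\,p\gamma_1\}$, and taking $p$-th roots produces the exponential bound \eqref{4.22}.

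The principal obstacle is precisely the haptotactic cross term. Since $v$ obeys a mere ODE, no smoothing of $\nabla v$ is available, so its decay must be imported entirely from the integrated information of Lemma \ref{Lemma46}; correspondingly it is essential that the logistic structure delivers damping \emph{uniformly in $\phi$} via $u>\gamma$, rather than only once $u$ is already close to $1$. One caveat is that the forcing $\int_\Omega|\nabla v|^2$ carries the fixed rate $2\gamma$, so the estimate above most transparently produces, for each fixed $p$, a positive exponent; a single $\eta_1$ admissible for the range of $p$ relevant to the subsequent $L^\infty$ step is then obtained by first settling the case $p=2$ and exploiting that the damping coefficient $\tfrac{p\mu_u\gamma}{2}$ grows with $p$ while $\|\phi\|_{L^\infty(\Omega)}$ stays bounded by \eqref{3.17}.
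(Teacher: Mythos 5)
Your energy computation is sound as far as it goes, and for $p=2$ it coincides with the paper's own first step: the paper likewise tests the $u$-equation by $u-1$, uses the lower bound $u\ge a>\gamma$ to turn the logistic term into linear damping, absorbs the haptotactic term into the diffusion via Young's inequality, and invokes Lemma \ref{Lemma46} together with the decay of $z$ to conclude $\int_\Omega(u-1)^2\le c\,e^{-\eta_1 t}$. The genuine gap is in the passage to general $p$. In your differential inequality the forcing $c(p)\int_\Omega|\nabla v|^2$ decays at the \emph{fixed} rate $2\gamma$ (up to the factor $t+1$), while the quantity you estimate is the $p$-th power $\int_\Omega|\phi|^p$; ODE comparison therefore gives $\int_\Omega|\phi|^p\le C(p)(t+1)^2e^{-\min\{p\mu_u\gamma/2,\,2\gamma,\,p\gamma_1\}\,t}$, and after taking $p$-th roots the exponent degenerates like $2\gamma/p$ as $p\to\infty$. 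So you obtain, for each fixed $p$, some rate $\eta_1(p)>0$, but not the lemma as stated, where a \emph{single} $\eta_1$ must serve for every $p\ge2$ and only $C(p)$ may depend on $p$. Your proposed repair does not close this: interpolating $\|\phi\|_{L^p}^p\le\|\phi\|_{L^\infty}^{p-2}\|\phi\|_{L^2}^2$ with $\|\phi\|_{L^\infty}$ merely \emph{bounded} again yields a rate proportional to $1/p$, and the growth of the damping coefficient $p\mu_u\gamma/2$ is irrelevant because the bottleneck is the forcing rate, not the damping.

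The missing idea is the one the paper deploys precisely for this purpose: after securing the $L^2$ decay \eqref{4.24}, it represents $u-1$ by a variation-of-constants formula and applies the $L^q$--$L^p$ smoothing estimates \eqref{4.25}--\eqref{4.26} of the Neumann heat semigroup. In that argument the exponent $p$ enters only through the strength of the integrable singularities $(t-s)^{-1+\frac1p}$ and $(t-s)^{-\frac12+\frac1p}$ of the convolution kernels, hence only into the constant $C(p)$, while the exponential rate is inherited from the $p$-independent decay of $\|\nabla v(\cdot,s)\|_{L^2(\Omega)}$, $\|(u-1)(\cdot,s)\|_{L^2(\Omega)}$ and $\|z(\cdot,s)\|_{L^1(\Omega)}$; this is exactly what produces one $\eta_1$ valid for all $p\ge2$. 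Your version would in fact suffice for the paper's subsequent steps, which only use the case $p=2$, but as a proof of Lemma \ref{Lemma47} as stated it falls short unless you add such a semigroup (or comparable) argument.
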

\begin{proof}
Testing the first equation in \eqref{1.3} by $u-1$ and  integrating by parts, we have
\begin{equation*}
\begin{array}{rl}
&\displaystyle\frac{d}{dt}\int_{\Omega}(u-1)^2+2D_u\int_{\Omega}|\nabla u|^2+2\mu_u\int_{\Omega}u(u-1)^2\\[2mm]
=&2\xi_u\displaystyle\int_{\Omega}u\nabla v\cdot\nabla u-2\rho\int_{\Omega}(u-1)uz.
\end{array}
\end{equation*}	
We thereupon make use of Lemma \ref{Lemma42}, Lemma \ref{Lemma35} along with the Young inequality to  get
\begin{equation}\label{4.23}
\begin{array}{rl}
&\displaystyle\frac{d}{dt}\int_{\Omega}(u-1)^2+D_u\int_{\Omega}|\nabla u|^2+2\mu_u\gamma \int_{\Omega}(u-1)^2\\
\leq &\displaystyle \frac{\xi^2_u}{D_u}\int_{\Omega}u^2|\nabla v|^2+2\rho\int_{\Omega}uz\\
\leq & c_1\displaystyle \int_{\Omega}|\nabla v|^2+c_1\int_{\Omega}z
\end{array}
\end{equation}	
with some $ c_1>0$.

According to Lemma \ref{Lemma46} and Lemma \ref{Lemma23},  \eqref{4.23} implies that
\begin{equation}\label{4.24}
\int_{\Omega}(u-1)^2\leq c_2 e^{-\eta_1 t}
\end{equation}	
with $\eta_1:=\min\{2\mu_u\gamma,2\gamma,\delta\}$ and  $c_2>0$ for all $t>0$.

Recalling known smoothing estimates for the Neumann heat semigroup on $\Omega\subset \mathbb{R}^2$ (\cite{WinklerJDE}),
 there exist $c_3=c_3(p,q)>0$, $c_4=c_4(p,q)>0$ fulfilling
\begin{equation}\label{4.25}
\left \| e^{\sigma D_u\Delta } \varphi\right \|_{L^{p}(\Omega)}
\leq c_3 \sigma^{-(\frac 1q-\frac 1p)}
 \| \varphi  \|_{L^q(\Omega)}
\end{equation}
for each $\varphi\in  C^0(\Omega)$, and
 for all $\varphi\in \left ( L^{q}\left ( \Omega  \right ) \right )^{2}$,
\begin{equation}\label{4.26}
\left \| e^{\sigma D_u\Delta }\nabla\cdot \varphi\right \|_{L^{p}\left ( \Omega  \right )}\leq c_{4} ( 1+\sigma^{-\frac{1}{2}-( \frac{1}{q}-\frac{1}{p}) } )e^{-\lambda _{1}\sigma}\left \|  \varphi \right \|_{L^{q}\left ( \Omega  \right )}
\end{equation}
with $ \lambda _{1}> 0$  the first nonzero eigenvalue of $-\Delta$ in $\Omega $ under the Neumann boundary condition.

Relying on a variation-of-constants representation of $u$ related to the the first equation in \eqref{1.3}, we utilize \eqref{4.25} and \eqref{4.26} to infer that
\begin{equation}\label{4.27}
\begin{array}{rl}
&\|(u-1)(\cdot,t)\|_{L^p(\Omega)}\\
\leq &
\|e^{t(D_u\Delta-\delta )}(u_0-1)\|_{L^p(\Omega)}+
\xi_u\displaystyle \int^{t}_0\|e^{(t-s)(D_u\Delta-\delta)} \nabla\cdot(u\nabla v) \|_{L^p(\Omega)}ds\\
&+
\displaystyle\int^{t}_0\|e^{(t-s)(D_u\Delta-\delta)}((\mu_u u-\delta)(1-u)-\rho uz)\|_{L^p(\Omega)}ds	\\
 \leq & \displaystyle e^{-\delta t}\|u_0-1\|_{L^{p}(\Omega)}+
  c_5(p)
\int^{t}_0(1+(t-s)^{-1+\frac1p})e^{-(\delta+\lambda_1)(t-s)}\|\nabla v(\cdot,s)\|_{L^{2}(\Omega)}ds\\
&
+c_5(p)\displaystyle\int^{t}_0(1+(t-s)^{-\frac12+\frac1p})e^{-\delta(t-s)}  \|(u-1)(\cdot,s)\|_{L^2(\Omega)}ds\\
&
+c_5(p)\displaystyle\int^{t}_0(1+(t-s)^{-1+\frac1p})e^{-\delta(t-s)}  \|z(\cdot,s)\|_{L^1(\Omega)}ds
\end{array}
\end{equation}
for  some $c_5(p)>0$.
Therefore by \eqref{4.24}, \eqref{4.19}, \eqref{2.8}  and thanks to the fact that for
 $\alpha\in(0,1) $  $\gamma_1$ and  $\delta_1$ positive constants with $ \gamma_1 \neq \delta_1$, there exists $c_6> 0$ such that
$$\int_{0}^{t} ( 1+( t-s  ) ^{-\alpha})e^{-\gamma_1 s}e^{-\delta_1 ( t-s )}ds
\leq c_{6} e^{-min\left \{ \gamma_1 ,\delta_1  \right \}t},
$$
\eqref{4.22} readily results from  \eqref{4.27} with $\eta_1=\frac 12 \min\{ \gamma, \mu_u \gamma,
\frac{1-\beta}2, \frac {\delta_z}2\}$ and some $C(p)>0$.
\end{proof}

At this position, due to the fact that the integrability exponent in \eqref{4.19} does not exceed the considered spatial dimension $n=2$, the uniform decay of $w$ is not achieved herein, however a somewhat optimal decay  rate thereof  with respect to  $L^p(\Omega)$ may be derived by the argument similar to that in Lemma \ref{Lemma47} instead of the simple interpolation.  The desired result can be stated below and the corresponding proof is omitted herein.

\begin{lemma}\label{Lemma48}
Let the condition in Lemma \ref{Lemma41}  hold.  Then there exists $\varrho_1>0$ such that for every $p\geq 2$,
\begin{equation}\label{4.28}
\|w(\cdot,t)\|_{L^p(\Omega)}\leq C(p) e^{-\varrho_1 t}
\end{equation}	
with some $C(p)>0$ for all $t>0$.
\end{lemma}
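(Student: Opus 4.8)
The plan is to follow the two-step scheme of Lemma \ref{Lemma47}: first establish an exponential decay of $w$ in $L^2(\Omega)$, and then upgrade it to $L^p(\Omega)$ for arbitrary $p\geq 2$ by means of a variation-of-constants representation combined with the Neumann heat semigroup smoothing estimates \eqref{4.25} and \eqref{4.26}.

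For the first step no new energy identity is required, since the differential inequality already derived in the course of proving Lemma \ref{Lemma45}, namely
\[
\frac{d}{dt}\int_\Omega e^{\chi_w v}b^2 + 2D_w\int_\Omega e^{\chi_w v}|\nabla b|^2 + 2\int_\Omega e^{\chi_w v}b^2 \leq c_1\Big(\int_\Omega v + \int_\Omega z\Big),
\]
already furnishes a zeroth-order absorptive term with the right sign, which is a positive multiple of the energy $\int_\Omega e^{\chi_w v}b^2$ itself. As $\int_\Omega v$ and $\int_\Omega z$ decay exponentially by Lemma \ref{Lemma43} and Lemma \ref{Lemma23}, discarding the nonnegative gradient term and invoking a standard ODE comparison yields exponential decay of $\int_\Omega e^{\chi_w v}b^2$. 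Recalling that $0\leq v\leq m_v$ by \eqref{2.4}, I then obtain $\int_\Omega w^2 = \int_\Omega e^{2\chi_w v}b^2 \leq e^{\chi_w m_v}\int_\Omega e^{\chi_w v}b^2$, so that $\|w(\cdot,t)\|_{L^2(\Omega)}\leq Ce^{-\varrho_0 t}$ for some $\varrho_0>0$.

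For the second step I would write the $w$-equation of \eqref{1.3} in mild form,
\begin{multline*}
w(\cdot,t) = e^{t(D_w\Delta-1)}w_0 - \xi_w\int_0^t e^{(t-s)(D_w\Delta-1)}\nabla\cdot(w\nabla v)(\cdot,s)\,ds \\ + \rho\int_0^t e^{(t-s)(D_w\Delta-1)}(uz)(\cdot,s)\,ds,
\end{multline*}
noting that the zeroth-order term $-w$ contributes an extra factor $e^{-(t-s)}$ to each semigroup action. The first summand decays like $e^{-t}\|w_0\|_{L^p(\Omega)}$. For the taxis integral I apply \eqref{4.26} with $q=2$, bounding $\|w\nabla v\|_{L^2(\Omega)}\leq \|w\|_{L^\infty(\Omega)}\|\nabla v\|_{L^2(\Omega)}$ via the boundedness \eqref{3.17} and the decay \eqref{4.19}. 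For the source integral I use \eqref{4.25} with $q=p$, together with the $L^\infty$-decay \eqref{4.1} of $z$ and the boundedness of $u$, which gives $\|uz\|_{L^p(\Omega)}\leq Ce^{-\gamma_1 s}$. Finally, the same convolution estimate used at the end of the proof of Lemma \ref{Lemma47}, namely $\int_0^t(1+(t-s)^{-\alpha})e^{-\gamma_1 s}e^{-\delta_1(t-s)}\,ds\leq c\,e^{-\min\{\gamma_1,\delta_1\}t}$ for $\alpha\in(0,1)$, converts each term into a genuine exponential, yielding \eqref{4.28} with a suitable $\varrho_1>0$.

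I expect the taxis integral to be the main obstacle. Since \eqref{4.19} controls $\nabla v$ only at the integrability level $L^2(\Omega)$ --- the exponent $2$ not exceeding the spatial dimension $n=2$ --- one is forced to take $q=2$ in \eqref{4.26}, whereupon the kernel carries the singularity $(t-s)^{-\frac12-(\frac12-\frac1p)}=(t-s)^{-1+\frac1p}$. This is integrable for every finite $p$ but degenerates to the non-integrable $(t-s)^{-1}$ as $p\to\infty$; it is precisely this loss that prevents passing to $L^\infty(\Omega)$ at the present stage and confines \eqref{4.28} to finite $p$.
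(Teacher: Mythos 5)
Your proposal is correct and follows exactly the route the paper intends: the paper in fact omits the proof of Lemma \ref{Lemma48}, stating only that it is ``derived by the argument similar to that in Lemma \ref{Lemma47}'', which is precisely your two-step scheme (exponential $L^2$-decay of $w$, here conveniently recycled from the differential inequality in the proof of Lemma \ref{Lemma45}, followed by the variation-of-constants upgrade via \eqref{4.25}--\eqref{4.26}). Your closing observation --- that $\nabla v$ being controlled only in $L^2$ forces $q=2$ in \eqref{4.26} and produces the singularity $(t-s)^{-1+\frac1p}$, which blocks the passage to $p=\infty$ at this stage --- is exactly the obstruction the paper itself points out just before stating the lemma.
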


Next we proceed to establish the convergence properties in \eqref{1.6}--\eqref{1.7} stated in Theorem 1.1, which are beyond that in Lemma \ref{Lemma47} and Lemma \ref{Lemma48}. To this end, thanks to   Lemma \ref{Lemma47}, Lemma \ref{Lemma48}, Lemma \ref{Lemma41},  Lemma \ref{Lemma42} and Lemma \ref{Lemma43},  we turn to make sure that  $\int_{\Omega}|\nabla v|^4$   decays exponentially, which results from a series of testing procedures.

\begin{lemma}\label{Lemma49}
Let conditions in Theorem \ref{Theorem 1.1} hold. Then there exist $\eta_2>0$ and  $C>0$
such that
\begin{equation}\label{4.29}
\int_{\Omega}|\nabla v|^4\leq C e^{-\eta_2 t}~~\hbox{for all $t>0$.}	
\end{equation}
\end{lemma}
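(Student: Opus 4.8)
The plan is to test the spatial gradient of the $v$-equation against $|\nabla v|^{2}\nabla v$ and read off a differential inequality for $\int_\Omega|\nabla v|^4$ whose dissipation is supplied by the pointwise lower bound on $u$, while the nuisance terms are rendered harmless by the exponential smallness of $\|v\|_{L^\infty(\Omega)}$. Differentiating the third equation of \eqref{1.3} gives $\nabla v_t=-(\alpha_u\nabla u+\alpha_w\nabla w)v-(\alpha_u u+\alpha_w w)\nabla v$, and multiplying by $|\nabla v|^{2}\nabla v$ and integrating leads to
\[
\frac14\frac{d}{dt}\int_\Omega|\nabla v|^4=-\int_\Omega(\alpha_u u+\alpha_w w)|\nabla v|^4-\int_\Omega v\,|\nabla v|^{2}\,\nabla v\cdot(\alpha_u\nabla u+\alpha_w\nabla w).
\]
Since $u=a e^{\chi_u v}\ge a>\gamma$ by Lemma \ref{Lemma42} (as $v\ge0$), and $w\ge0$, the first term on the right is controlled by the genuine dissipation $-\alpha_u\gamma\int_\Omega|\nabla v|^4$, so that, writing $y(t):=\int_\Omega|\nabla v|^4$,
\[
y'(t)+4\alpha_u\gamma\, y(t)\le 4\int_\Omega v\,|\nabla v|^{3}\,(\alpha_u|\nabla u|+\alpha_w|\nabla w|).
\]

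Next I would estimate the right-hand side. Using $v\le\|v(\cdot,t)\|_{L^\infty(\Omega)}$ together with Young's inequality in the form $|\nabla v|^{3}|\nabla u|\le\tfrac34|\nabla v|^4+\tfrac14|\nabla u|^4$ (and likewise for $w$),
\[
\int_\Omega v\,|\nabla v|^{3}|\nabla u|\le\|v(\cdot,t)\|_{L^\infty(\Omega)}\left(\tfrac34\,y(t)+\tfrac14\int_\Omega|\nabla u|^4\right).
\]
By Lemma \ref{Lemma43} we have $\|v(\cdot,t)\|_{L^\infty(\Omega)}\le Ce^{-\gamma_2 t}\to0$, so there is $t_\star>0$ such that for $t\ge t_\star$ the coefficient of $y(t)$ produced this way is below $2\alpha_u\gamma$; absorbing it into the dissipation leaves
\[
y'(t)+2\alpha_u\gamma\, y(t)\le C_1\,\|v(\cdot,t)\|_{L^\infty(\Omega)}\int_\Omega\big(|\nabla u|^4+|\nabla w|^4\big),\qquad t\ge t_\star,
\]
for some $C_1>0$. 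Thus everything hinges on controlling the quartic gradient integral $\int_\Omega(|\nabla u|^4+|\nabla w|^4)$, against which the exponentially small prefactor $\|v\|_{L^\infty(\Omega)}$ will be played off.

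The hard part will be precisely this supercritical term: because the $v$-equation is an ODE, no spatial smoothing on $v$ is available, so a direct uniform bound on $\int_\Omega|\nabla u|^4$ (equivalently $\int_\Omega|\nabla a|^4$, via $\nabla u=e^{\chi_u v}(\nabla a+\chi_u a\nabla v)$) cannot simply be quoted, and the exponent $4$ is forced on us by pairing $|\nabla v|^{3}$ with the available $|\nabla v|^4$ dissipation. To resolve it I would not treat $y$ in isolation but build a coupled quasi-Lyapunov functional that adjoins to $y(t)$ appropriate weighted gradient functionals of $a$ and $b$ — for instance terms of the type $\int_\Omega e^{\chi_u v}|\nabla a|^{2}$ and $\int_\Omega e^{\chi_w v}|\nabla b|^{2}$ (or their quartic analogues) — whose own testing identities, after invoking the uniform $L^\infty$-bounds of Lemma \ref{Lemma35}, the pointwise lower bound of Lemma \ref{Lemma42}, and the decay information of Lemmas \ref{Lemma23}, \ref{Lemma46}, \ref{Lemma47} and \ref{Lemma48}, furnish a bound of the form $\int_\Omega(|\nabla u|^4+|\nabla w|^4)\le C$ (or at most polynomially growing) and thereby close the system. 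Substituting such a bound yields
\[
y'(t)+2\alpha_u\gamma\,y(t)\le C_2\,e^{-\gamma_2 t},\qquad t\ge t_\star,
\]
whence an ODE comparison gives $y(t)\le Ce^{-\eta_2 t}$ with $\eta_2:=\min\{2\alpha_u\gamma,\gamma_2\}$ (absorbing, if necessary, a harmless $(t+1)$ prefactor into a slightly smaller rate exactly as in the passage from \eqref{4.21} to \eqref{4.19}). Combining with the elementary continuity bound of $\int_\Omega|\nabla v|^4$ on the compact interval $[0,t_\star]$ then establishes \eqref{4.29} for all $t>0$.
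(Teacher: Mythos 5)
Your first half mirrors the paper's own computation: the paper likewise tests the gradient of the $v$-equation against $|\nabla v|^{2}\nabla v$ (working in the variables $a,b$ rather than $u,w$, cf.\ \eqref{4.36}), extracts the dissipation $c_5\int_\Omega|\nabla v|^4$ from the pointwise lower bound of Lemma \ref{Lemma42}, and is left with cross terms carrying the small prefactor $\|v\|_{L^\infty(\Omega)}$, exactly as in your differential inequality for $y(t)$ and the paper's \eqref{4.37}. The difficulty you flag, namely the quartic terms $\int_\Omega(|\nabla u|^4+|\nabla w|^4)$, is indeed the crux; but at precisely that point your proposal stops being a proof. You posit that adjoining quadratic functionals such as $\int_\Omega e^{\chi_u v}|\nabla a|^{2}$ would ``furnish a bound of the form $\int_\Omega(|\nabla u|^4+|\nabla w|^4)\le C$'', and this claim is neither justified nor plausible as stated: a uniform-in-time bound on $\|\nabla a\|_{L^2(\Omega)}^2$ (which is what such a functional can deliver) gives no pointwise-in-time control of $\|\nabla a\|_{L^4(\Omega)}^4$; that would require a uniform bound on a genuinely higher-order quantity such as $\|\Delta a\|_{L^2(\Omega)}$, which is not available here --- recall that the diffusion operator for $a$ carries the rough coefficient $e^{\chi_u v}$ and $v$ undergoes no smoothing.

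The paper closes the loop by a different mechanism, and this is the missing idea. One tests the $a$- and $b$-equations by $-\Delta a$ and $-\Delta b$, so that the evolution of $\|\nabla a\|_{L^2(\Omega)}^2+\|\nabla b\|_{L^2(\Omega)}^2$ comes with the dissipation terms $\|\Delta a\|_{L^2(\Omega)}^2$ and $\|\Delta b\|_{L^2(\Omega)}^2$ (see \eqref{4.34}--\eqref{4.35}); one then never seeks a standalone bound for the quartic gradients at all. Instead, by the two-dimensional interpolation inequality $\|\nabla\varphi\|_{L^4(\Omega)}^4\le c_1\|\Delta\varphi\|_{L^2(\Omega)}^2\|\varphi\|_{L^\infty(\Omega)}^2$ for Neumann functions (Lemma A.1 in \cite{Fuest}) together with the $L^\infty$-bounds of Lemma \ref{Lemma35}, the quartic terms appearing in \eqref{4.37} obey $\|\nabla a\|_{L^4(\Omega)}^4\le c_1c_3^2\|\Delta a\|_{L^2(\Omega)}^2$, so in the coupled functional $\|\nabla a\|_{L^2(\Omega)}^2+\|\nabla b\|_{L^2(\Omega)}^2+d_1\|\nabla v\|_{L^4(\Omega)}^4$ they can be \emph{absorbed} into the Laplacian dissipation once their prefactor $\|v(\cdot,t_0)\|_{L^\infty(\Omega)}$ is made small via Lemma \ref{Lemma43} (this is the passage from \eqref{4.38} to \eqref{4.39}). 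The remaining forcing $\|f\|_{L^2(\Omega)}^2+\|g\|_{L^2(\Omega)}^2$ decays exponentially by Lemmas \ref{Lemma41}, \ref{Lemma43}, \ref{Lemma47} and \ref{Lemma48}, and an ODE comparison finishes. In short, your structural plan --- couple $y(t)$ with gradient functionals of $a$ and $b$ and play the smallness of $v$ against the bad terms --- is the paper's plan, but the actual device that makes it work ($-\Delta$-testing plus the interpolation $\|\nabla\varphi\|_{L^4}^4\le c\,\|\Delta\varphi\|_{L^2}^2\|\varphi\|_{L^\infty}^2$, allowing absorption rather than a uniform quartic bound) is absent, so the proposed argument does not close.
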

\begin{proof}
 Testing the identity
\begin{equation*}
a_t=D_u \triangle a + D_u \nabla v \cdot \nabla a + f(x,t), \quad x\in\Omega, \quad t > 0
\end{equation*}	
with $
f(x,t)=\mu_u a(1-u)-\displaystyle\rho a z+\chi_u a(\alpha_u u
+ \alpha_w w)v$ by $-\triangle a$, and using Young's inequality, we get
\begin{equation}\label{4.30}
\begin{array}{rl}
\displaystyle \frac d {dt}\int_\Omega |\nabla a|^2 +2D_u\displaystyle \int_\Omega|\triangle a|^2 =&-2D_u \chi_u \displaystyle \int_\Omega  (\nabla a\cdot \nabla v)\triangle a-
2\int_\Omega f\triangle a\\
\leq& D_u\displaystyle \int_\Omega|\triangle a|^2 +2D_u\chi_u^2 \int_\Omega  |\nabla a|^2 |\nabla v|^2+\frac 2{D_u}\int_\Omega  |f|^2.
\end{array}
\end{equation}
Note that  by the Gagliardo--Nirenberg type interpolation with standard elliptic
regularity theory and Poincar\'{e}'s inequality,  one can find constants $c_1> 0$ and  $c_2> 0$ such that for all $\varphi\in W^{2,2}(\Omega)$ with $\frac{\partial \varphi}{\partial\nu }=0 $ on $\partial\Omega$,
\begin{equation*} \|\nabla \varphi\|^4_ {L^4(\Omega)}\leq c_1 \| \Delta \varphi\|^2_{L^2(\Omega)}\| \varphi\|^2_{L^\infty(\Omega)}
\end{equation*}
and $$\|\nabla \varphi\|^2_ {L^2(\Omega)}\leq c_2 \| \Delta \varphi\|^2_{L^2(\Omega)}
$$
(see Lemma A.1 and A.3 in \cite{Fuest}). Hence thanks to Lemma \ref{Lemma35}, we can pick $c_3>0$ such that
$$a(x,t)\leq c_3, \quad  b(x,t)\leq c_3
  $$
for all $x \in \Omega$ and $t > 0$, and thereby have
\begin{equation}\label{4.31}
 \|\nabla a\|^4_ {L^4(\Omega)}\leq c_1c_3^2 \| \Delta a\|^2_{L^2(\Omega)},\quad
\|\nabla a\|^2_ {L^2(\Omega)}\leq c_2 \| \Delta a\|^2_{L^2(\Omega)}
\end{equation}
as well as  \begin{equation} \label{4.32}
\|\nabla b\|^4_ {L^4(\Omega)}\leq c_1c_3^2 \| \Delta b\|^2_{L^2(\Omega)},\quad
\|\nabla b\|^2_ {L^2(\Omega)}\leq c_2 \| \Delta b\|^2_{L^2(\Omega)}
\end{equation}
Combining \eqref{4.31} with  \eqref{4.30}, the Young inequality  shows
 that for every $\varepsilon>0$
 \begin{equation}\label{4.33}
\begin{array}{rl}
&\displaystyle \frac d {dt}\|\nabla a\|^2_ {L^2(\Omega)} + \frac{D_u}{2c_2}\|\nabla a\|^2_ {L^2(\Omega)}+
D_u\displaystyle \|\triangle a\|^2_ {L^2(\Omega)} \\[3mm]
\leq & \varepsilon \|\nabla a\|^4_ {L^4(\Omega)} +  \displaystyle\frac{D_u}{2c_2}\|\nabla a\|^2_ {L^2(\Omega)}
+ \displaystyle\frac {D_u^2\chi_u^4}{\varepsilon} \|\nabla v\|^4_ {L^4(\Omega)}
+\frac 2{D_u}\|f\|_{L^2(\Omega)}^2\\
\leq&  \varepsilon c_1  c_3^2 \displaystyle \| \Delta a\|^2_{L^2(\Omega)}+\displaystyle  \frac {D_u}2\|\triangle a\|^2_ {L^2(\Omega)}
+ \displaystyle\frac {D_u^2\chi_u^4}{\varepsilon} \|\nabla v\|^4_ {L^4(\Omega)}
+\frac 2{D_u}\|f\|_{L^2(\Omega)}^2,
\end{array}
\end{equation}
which along with the choice of $\varepsilon=\frac{D_u}{4c_1c_3^2}$ implies that

 \begin{equation}\label{4.34}
\begin{array}{rl}
&\displaystyle \frac d {dt}\|\nabla a\|^2_ {L^2(\Omega)} + \frac{D_u}{2c_2}\|\nabla a\|^2_ {L^2(\Omega)}+
\displaystyle\frac {D_u}4 \|\triangle a\|^2_ {L^2(\Omega)} \\[3mm]
\leq&  \displaystyle 4D_u\chi_u^4  c_1  c_3^2 \|\nabla v\|^4_ {L^4(\Omega)}
+\frac 2{D_u}\|f\|_{L^2(\Omega)}^2.
\end{array}
\end{equation}
Likely, we can get
\begin{equation}\label{4.35}
\begin{array}{rl}
&\displaystyle \frac d {dt}\|\nabla b\|^2_ {L^2(\Omega)} + \frac{D_w}{2c_2}\|\nabla b\|^2_ {L^2(\Omega)}+
\displaystyle\frac {D_w}4 \|\triangle b\|^2_ {L^2(\Omega)} \\[3mm]
\leq&  \displaystyle 4D_w\chi_w^4  c_1  c_3^2 \|\nabla v\|^4_ {L^4(\Omega)}
+\frac 2{D_w}\|g\|_{L^2(\Omega)}^2.
\end{array}
\end{equation}
with $
g(x,t)=- b+\displaystyle\rho ue^{-\chi_w v}z
+
\chi_w b(\alpha_u u+ \alpha_w w)v$.

Now in order to appropriately compensate the first summand on right-hand side of \eqref{4.34} and  \eqref{4.35},  we use
the third equation in \eqref{2.2} to see that
\begin{equation}\label{4.36}
\begin{array}{rl}
&\displaystyle \frac 14 \frac d {dt}\int_\Omega |\nabla v|^4\\
  =& -\displaystyle  \int_\Omega|\nabla v|^2 \nabla v\cdot\nabla v_t\\
  =&-\alpha_u \displaystyle \int_\Omega   a(v+\chi_u)e^{\chi_u v}|\nabla v|^4-
\alpha_u \displaystyle \int_\Omega   ve^{\chi_u v}|\nabla v|^2 \nabla v\cdot\nabla a\\
&-\alpha_w \displaystyle \int_\Omega   b(v+\chi_w)e^{\chi_w v}|\nabla v|^4-
\alpha_w \displaystyle \int_\Omega   ve^{\chi_w v}|\nabla v|^2 \nabla v\cdot\nabla b.
\end{array}
\end{equation}
Here recalling the uniform positivity of $a$ stated in Lemma \ref{Lemma42}, we can pick $c_4>0$ fulfilling
$$
\alpha_u \displaystyle \int_\Omega   a(v+\chi_u)e^{\chi_u v}|\nabla v|^4
\geq \alpha_u \chi_uc_4 \displaystyle \int_\Omega  |\nabla v|^4
$$
and thus  infer by the Young inequality and  Lemma \ref{Lemma22} that for all $t>t_0$
\begin{equation}\label{4.37}
\begin{array}{rl}
&\displaystyle  \frac d {dt}\int_\Omega |\nabla v|^4 +c_5\int_\Omega |\nabla v|^4
\\
  \leq & c_6\|v(\cdot,t_0)\|_{L^\infty(\Omega)}\displaystyle \int_\Omega  (|\nabla a|^4+ |\nabla b|^4)
\end{array}
\end{equation}
with constants $c_5>0$, $c_6>0$.

Now if we write $d_1:=\frac{8c_1c_3^2(  D_u\chi_u^4+ D_w\chi_w^4)} {c_5}$, combining \eqref{4.37}, \eqref{4.34} with  \eqref{4.35} yields
 \begin{equation}\label{4.38}
\begin{array}{rl}
&\displaystyle \frac d {dt}\left(\|\nabla a\|^2_ {L^2(\Omega)} + \|\nabla b\|^2_ {L^2(\Omega)}+d_1\|\nabla v\|_ {L^4(\Omega)}^4\right)\\[3mm]
&+
\displaystyle \frac{D_u}{2c_2}\|\nabla a\|^2_ {L^2(\Omega)}+
\displaystyle\frac {D_u}4 \|\triangle a\|^2_ {L^2(\Omega)}
+ \frac{D_w}{2c_2}\|\nabla b\|^2_ {L^2(\Omega)}+
\displaystyle\frac {D_w}4 \|\triangle b\|^2_ {L^2(\Omega)} +\frac{c_5 d_1}2\|\nabla v\|_ {L^4(\Omega)}^4
\\[3mm]
\leq& c_6d_1\|v(\cdot,t_0)\|_{L^\infty(\Omega)} (\|\nabla a\|_ {L^4(\Omega)}^4 +\|\nabla b\|_ {L^4(\Omega)}^4)
+\displaystyle\frac 2{D_u}\|f\|_{L^2(\Omega)}^2+\frac 2{D_w}\|g\|_{L^2(\Omega)}^2,
\end{array}
\end{equation}
which together with \eqref{4.31}, \eqref{4.32} and Lemma \ref{Lemma43} entails that there exists $t_1>1$ suitably large such that
for all $t>t_1$, \begin{equation}\label{4.39}
\begin{array}{rl}
&\displaystyle\frac d {dt}\left(\|\nabla a\|^2_ {L^2(\Omega)} + \|\nabla b\|^2_ {L^2(\Omega)}+d_1\|\nabla v\|_ {L^4(\Omega)}^4\right)\\[3mm]
&+
\displaystyle\frac{D_u}{2c_2}\|\nabla a\|^2_ {L^2(\Omega)}+ \frac{D_w}{2c_2}\|\nabla b\|^2_ {L^2(\Omega)}+\frac{c_5 d_1}2\|\nabla v\|_ {L^4(\Omega)}^4
\\[3mm]
\leq&  \displaystyle\frac 2{D_u}\|f\|_{L^2(\Omega)}^2+\frac 2{D_w}\|g\|_{L^2(\Omega)}^2.
\end{array}
\end{equation}
 Due to Lemma \ref{Lemma35}, we see that
 $$|f(x,t)|^2+|g(x,t)|^2\leq c_7(|u(x,t)-1|^2+|w(x,t)|^2+ |z(x,t)|^2+|v(x,t)|^2)
 $$
  with some $c_7>0$,  and thus  there exist $\eta_3>0$ and  $c_8>0$
such that
\begin{equation}\label{4.40}
\int_{\Omega}|f(x,t)|^2+ |g(x,t)|^2 \leq c_8 e^{-\eta_3 t}~~\hbox{for all $t>t_1,$}	
\end{equation}
thanks to   Lemma \ref{Lemma47}, Lemma \ref{Lemma48}, Lemma \ref{Lemma41}  and Lemma \ref{Lemma43}. Therefore \eqref{4.29} readily results from
\eqref{4.40} and \eqref{4.39}.
 \end{proof}

At this position, as an application of known smoothing estimates for the Neumann heat semigroup, the latter readily turns to
 the exponential decay property of $u-1$ as well as $w$ with respect to $L^\infty(\Omega)$-norm.
\begin{lemma}\label{Lemma410}
Let the conditions in Theorem \ref{Theorem 1.1} hold. Then there exist $\eta,\varrho>0$ and  $C>0$ fulfilling
\begin{equation}\label{4.41}
\|u(\cdot,t)-1\|_{L^\infty(\Omega)}\leq C e^{-\eta t}
\end{equation}	
as well as   \begin{equation}\label{4.42}
\|w(\cdot,t)\|_{L^{\infty}(\Omega)}\leq C e^{-\varrho t}	
\end{equation}
  for all  $t>0$.
\end{lemma}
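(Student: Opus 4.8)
The plan is to recover both \eqref{4.41} and \eqref{4.42} from variation-of-constants representations associated with the Neumann heat semigroup, estimating the resulting Duhamel integrals by means of the smoothing inequalities \eqref{4.25} and \eqref{4.26}, this time evaluated with $p=\infty$. The decisive new ingredient, absent in Lemma \ref{Lemma47}, is the exponential decay of $\int_\Omega|\nabla v|^4$ established in Lemma \ref{Lemma49}: since the integrability exponent $4$ exceeds the spatial dimension $n=2$, choosing $q=4$ in the gradient estimate \eqref{4.26} produces a temporal singularity $(t-s)^{-\frac12-\frac1q}=(t-s)^{-\frac34}$ that remains integrable near $s=t$, which is exactly what permits the passage from the $L^p$-bounds of Lemma \ref{Lemma47} and Lemma \ref{Lemma48} to the supremum norm.

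For \eqref{4.41} I would rewrite the $u$-equation in \eqref{1.3} as
$$(u-1)_t=(D_u\Delta-\delta)(u-1)-\xi_u\nabla\cdot(u\nabla v)+(\mu_u u-\delta)(1-u)-\rho uz,$$
with $\delta=\min\{1-\beta,\delta_z\}$ as in the proof of Lemma \ref{Lemma47}, and split the corresponding Duhamel formula into three contributions. The initial-data term decays like $e^{-\delta t}\|u_0-1\|_{L^\infty(\Omega)}$. For the haptotaxis term I would apply \eqref{4.26} with $q=4$, using the boundedness of $u$ from Lemma \ref{Lemma35} together with $\|\nabla v(\cdot,s)\|_{L^4(\Omega)}\leq Ce^{-\frac{\eta_2}{4}s}$ from Lemma \ref{Lemma49}, so that the integrand is controlled by $(1+(t-s)^{-\frac34})e^{-(\delta+\lambda_1)(t-s)}e^{-\frac{\eta_2}{4}s}$. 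For the zero-order term I would invoke \eqref{4.25} with a finite exponent $q\geq 2$ and estimate $\|(\mu_u u-\delta)(1-u)\|_{L^q(\Omega)}\leq C\|u-1\|_{L^q(\Omega)}\leq Ce^{-\eta_1 s}$ via Lemma \ref{Lemma47}, as well as $\|\rho uz\|_{L^q(\Omega)}\leq C\|z\|_{L^\infty(\Omega)}\leq Ce^{-\gamma_1 s}$ via Lemma \ref{Lemma41}. Each integral is then absorbed by the convolution inequality already recorded in the proof of Lemma \ref{Lemma47}, yielding \eqref{4.41} with some $\eta>0$ dictated by the minimum of $\delta$, $\eta_1$, $\gamma_1$ and $\tfrac{\eta_2}{4}$ (shrunk slightly to cover coinciding exponents).

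The estimate \eqref{4.42} follows along identical lines from the representation
$$w(\cdot,t)=e^{t(D_w\Delta-1)}w_0-\xi_w\int_0^t e^{(t-s)(D_w\Delta-1)}\nabla\cdot(w\nabla v)\,ds+\rho\int_0^t e^{(t-s)(D_w\Delta-1)}(uz)\,ds,$$
in which the $-w$ term supplies the damping $e^{-(t-s)}$. The first summand decays like $e^{-t}\|w_0\|_{L^\infty(\Omega)}$, the taxis integral is handled exactly as above with $q=4$ and the boundedness of $w$, and the production integral is controlled through $\|uz\|_{L^q(\Omega)}\leq C\|z\|_{L^\infty(\Omega)}\leq Ce^{-\gamma_1 s}$, giving \eqref{4.42} with $\varrho>0$ equal to a minimum of $1$, $\gamma_1$ and $\tfrac{\eta_2}{4}$.

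I expect the sole genuine difficulty to be the integrability of the temporal singularity in the haptotaxis term at $p=\infty$. With only the $L^2$-decay of $\nabla v$ (exponent $q=2$), the inequality \eqref{4.26} would enforce the singularity $(t-s)^{-\frac12-\frac12}=(t-s)^{-1}$, which is not integrable and blocks the supremum estimate; this is precisely the gap that Lemma \ref{Lemma49} closes by providing $L^4$-decay, since $q=4$ simultaneously ensures $\tfrac12+\tfrac1q<1$ and is matched by the available control on $\nabla v$. Everything else reduces to the routine bookkeeping of decay rates through the convolution inequality.
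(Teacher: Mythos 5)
Your proposal is correct and follows essentially the same route as the paper: a variation-of-constants representation combined with Neumann heat semigroup smoothing estimates at $p=\infty$, where the gradient term is handled via the $L^4$-decay of $\nabla v$ from Lemma \ref{Lemma49} (giving the integrable singularity $(t-s)^{-3/4}$) and the zero-order terms via Lemma \ref{Lemma47} and Lemma \ref{Lemma41}. Your identification of why the $L^2$-decay of $\nabla v$ alone would fail (non-integrable $(t-s)^{-1}$ singularity) is exactly the point the paper makes in motivating Lemma \ref{Lemma49}, and your explicit treatment of $w$ matches what the paper leaves to the reader as "similar."
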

\begin{proof}
 Since the proof is similar to that of  Lemma \ref{Lemma47}, we only give a short proof of  \eqref{4.41}.
In view to known smoothing estimates for the Neumann heat semigroup on $\Omega\subset \mathbb{R}^2$ (\cite{WinklerJDE}),
 there exist $c_1>0$, $c_2>0$ fulfilling
\begin{equation}\label{4.43}
\left \| e^{\sigma D_u\Delta } \varphi\right \|_{L^{\infty}(\Omega)}
\leq c_1 \sigma^{-\frac 12}
 \| \varphi  \|_{L^2(\Omega)}
\end{equation}
for each $\varphi\in  C^0(\Omega)$, and
 for all $\varphi\in \left ( L^{4}\left ( \Omega  \right ) \right )^{2}$,
\begin{equation}\label{4.44}
\left \| e^{\sigma D_u\Delta }\nabla\cdot \varphi\right \|_{L^{\infty}\left ( \Omega  \right )}\leq c_2 ( 1+\sigma^{-\frac34} )e^{-\lambda_{1}\sigma}\left \|  \varphi \right \|_{L^{4}\left ( \Omega  \right )}
\end{equation}
with $ \lambda _{1}> 0$  the first nonzero eigenvalue of $-\Delta$ in $\Omega $ under the Neumann boundary condition.

According to the variation-of-constants representation of $u$ related to the the first equation in \eqref{1.3}, we utilize \eqref{4.43} and \eqref{4.44} to infer that
\begin{equation}\label{4.45}
\begin{array}{rl}
&\|(u-1)(\cdot,t)\|_{L^\infty(\Omega)}\\
\leq &
\|e^{t(D_u\Delta-1 )}(u_0-1)\|_{L^\infty(\Omega)}+
\xi_u\displaystyle \int^{t}_0\|e^{(t-s)(D_u\Delta-1)} \nabla\cdot(u\nabla v) \|_{L^\infty(\Omega)}ds\\
&+
\displaystyle\int^{t}_0\|e^{(t-s)(D_u\Delta-1)}((\mu_u u-1)(1-u)-\rho uz)\|_{L^\infty(\Omega)}ds	\\
 \leq &
 \displaystyle e^{-t}\|u_0-1\|_{L^{\infty}(\Omega)}+
  c_3
\int^{t}_0(1+(t-s)^{-\frac34})e^{-(1+\lambda_1)(t-s)}\|\nabla v(\cdot,s)\|_{L^{4}(\Omega)}ds\\
&
+c_3\displaystyle\int^{t}_0(1+(t-s)^{-\frac12})e^{-(t-s)}
 (\|(u-1)(\cdot,s)\|_{L^2(\Omega)}
+ \|z(\cdot,s)\|_{L^2(\Omega)})ds
\end{array}
\end{equation}
with some $c_3>0$.
This  readily  establish \eqref{4.41} with appropriate $\eta>0$ in view of  \eqref{4.29}, \eqref{4.22} and \eqref{4.1}.
\end{proof}

Thereby  our main result has essentially been proved already.

\bf{Proof of Theorem 1.1.}\rm \, The statement on global boundedness of classical solutions
has been asserted by Lemma \ref{Lemma35}.  The convergence properties in \eqref{1.6}--\eqref{1.9} are
precisely established by  Lemma \ref{Lemma410},  Lemma \ref{Lemma41} and Lemma \ref{Lemma43}, respectively.

\section{Acknowledgments}
This work is partially supported by NSFC (No.12071030).

\end{document}